\newtheorem{thm}{Theorem}[section]
\newtheorem{lem}[thm]{Lemma}
\newtheorem{prop}[thm]{Proposition}
\newtheorem{cor}[thm]{Corollary}
\newtheorem{hyp}[thm]{Hypothesis}
\theoremstyle{definition}
\newtheorem{defn}[thm]{Definition}
\newtheorem{ex}[thm]{Example}
\newtheorem{rem}[thm]{Remark}
\newcommand{\ol}{\overline}
\newcommand{\defbold}{\textbf}
\newcommand{\Min}{\mathrm{Min}}
\newcommand{\inv}{^{-1}}
\newcommand{\N}{\mathrm{N}}
\newcommand{\tdlc}{t.d.l.c.\@\xspace}
\newcommand{\Fix}{\mathrm{Fix}}
\newcommand{\hb}{\mathrm{hyp}}
\newcommand{\para}{\mathrm{par}}
\newcommand{\con}{\mathrm{con}}
\newcommand{\nub}{\mathrm{nub}}
\newcommand{\triv}{\{1\}}
\newcommand{\TA}{(\mathrm{TA})}
\newcommand{\TB}{(\mathrm{TB})}
\newcommand{\GTA}{(\mathrm{GTA})}
\newcommand{\GTp}{(\mathrm{GT}_+)}
\newcommand{\GTm}{(\mathrm{GT}_-)}
\newcommand{\Aut}{\mathrm{Aut}}
\newcommand{\Isom}{\mathrm{Isom}}
\newcommand{\Sym}{\mathrm{Sym}}
\newcommand{\Nb}{\mathbb{N}}
\newcommand{\Qb}{\mathbb{Q}}
\newcommand{\Rb}{\mathbb{R}}
\newcommand{\Zb}{\mathbb{Z}}
\newcommand{\mc}[1]{\mathcal{#1}}
\newcommand{\grp}[1]{\langle #1 \rangle}
\begin{document}

\title{The scale function for locally compact groups acting on non-positively curved spaces}

\preauthor{\large}
\DeclareRobustCommand{\authoring}{
\renewcommand{\thefootnote}{\arabic{footnote}}
\begin{center}Colin D. Reid\textsuperscript{1}\footnotetext[1]{Research supported in part by ARC grant FL170100032.}\\
\href{mailto:colin@reidit.net}{colin@reidit.net}
\end{center}
}
\author{\authoring}
\postauthor{\par}

\maketitle

\begin{abstract}
Let $G$ be a totally disconnected, locally compact (\tdlc) group.  The scale $s_G(g)$ of $g \in G$ in the sense of Willis is given by the minimum value of the index $|gUg\inv:U \cap gUg\inv|$ as $U$ ranges over the compact open subgroups; the theory associated to the scale has been very successful in describing general dynamical features of automorphisms of \tdlc groups.  

We focus on the case where $G$ acts properly and continuously by isometries on a geodesic space $X$, where $X$ is complete CAT(0) or proper and Gromov-hyperbolic, and $g \in G$ is hyperbolic.  In this context, we find geometric descriptions of the parabolic and contraction groups, tidy subgroups, and structures in the $G$-action that encode the scale, including criteria for $g$ to have scale $1$.
\end{abstract}


\section{Introduction}

\subsection{Background and scope}

Scale theory is an approach to studying the dynamics of automorphisms of totally disconnected, locally compact (\tdlc) groups.  Developed principally by G. Willis and his collaborators, starting with the 1994 article \cite{Willis94}, it is a major plank of the general theory of \tdlc groups as it exists today.  Much of its power lies in the fact that its main results apply to all \tdlc groups, without any assumptions about linearity, finiteness properties or geometric structure.  However, an unavoidable consequence of this generality is that the core concepts are quite abstract in nature, concerned purely with the structure of the \tdlc group $G$ as a topological group.

The goal of this article is to give `geometric' interpretations to some basic concepts from scale theory in a setting which, while not totally general, covers many classes of \tdlc groups of interest in practice.  For concreteness and to take advantage of well-established results on the geometric side, we only consider inner automorphisms (by itself this is not much of a restriction, since $G \rtimes \grp{\alpha}$ is a \tdlc group for any \tdlc group $G$ and automorphism $\alpha$), and we will assume the \tdlc group $G$ is acting properly and continuously on a geodesic space $X$ with at least one of the following properties:
\begin{enumerate}[(a)]
\item $X$ is a complete metric space and satisfies the CAT(0) inequality;
\item $X$ is proper (that is, closed balls are compact) and $\delta$-hyperbolic in the sense of Gromov.
\end{enumerate}
We will refer to such spaces as \defbold{NPC spaces}.  The given hypotheses (a) and (b) can be considered `test cases' for an approach that will hopefully extend to a more general setting of non-positively curved actions, and possibly beyond.  Both types of NPC space have a natural notion of visual boundary, which we denote $\partial X$.

The foundational concepts of scale theory are the scale function and tidy subgroups.  Given a \tdlc group $G$ and $g \in G$, the \defbold{scale} $s_G(g)$ is the minimum value of $|gUg\inv: U \cap gUg\inv|$ as $U$ ranges over the compact open subgroups of $G$, and $U$ is said to be \defbold{minimizing} if it achieves this minimum value.  Willis characterized the structure of the minimizing compact open subgroups (\cite[Theorem~3.1]{WillisFurther}, see Theorem~\ref{thm:Willis}).

To understand a given element $g \in G$ in terms of the scale, it is important to consider the pair $(s_G(g),s_G(g\inv))$.  By definition, the scale takes positive integer values; from a dynamical perspective, the key distinction is between a value equal to $1$, and a value greater than $1$.  In particular, $g$ is \defbold{uniscalar} (in $G$) if $s_G(g) = s_G(g\inv) = 1$.  For example, if $G$ has a compact open normal subgroup, then clearly $G$ is uniscalar.  However, the structure of more general uniscalar \tdlc groups is mysterious at present, even in what should be the most favourable situation geometrically, namely when $G$ acts properly and cocompactly on a locally finite tree.  A major goal of this article is to understand more about elements of scale $1$.

In terms of the action on the NPC space, the element $g \in G$ could be one of three types of isometry: bounded, parabolic or hyperbolic.  (Although conceptually similar, the conventional definitions are different for $\delta$-hyperbolic versus CAT(0) spaces; see Section~\ref{sec:types} for details.)  If the forward orbit $(g^nx)_{n \ge 0}$ (for $x \in X$) travels towards some point $\xi_+$ at infinity, then $\xi_+$ does not depend on $x$, and we call it the \defbold{attracting point} $\xi_+(g)$ of $g$; similarly we define the \defbold{repelling point} $\xi_-(g) := \xi_+(g^{-1})$.  The \defbold{translation length} of $g$ is given by $|g| := \lim_{n \rightarrow +\infty}d(x,g^nx)/n$ for any point $x \in X$; in particular, $|g|=0$ if $g$ is bounded, while $|g|>0$ if $g$ is hyperbolic.

If $g$ is bounded, then it is uniscalar for more or less trivial reasons, so we are not interested in this case.  For many well-behaved actions of \tdlc groups, parabolic isometries are ruled out by general results (see for example \cite{BridsonPolyhedral} and \cite[Theorem 20]{BMW}), and general tools for studying parabolic isometries are somewhat limited, so we largely avoid this case.  This leaves the main focus on when $g$ acts as a hyperbolic isometry, which ensures the attracting and repelling points of $g$ exist and are distinct.

In this article we focus on scale theory for a cyclic subgroup $\grp{g}$; we note however that there is a higher rank generalization of the scale theory of cyclic subgroups, known as the theory of flat subgroups.  The study of geometric structure in NPC spaces associated to flat subgroups will be postponed to a subsequent article.

\subsection{The stabilizer of the attracting point}

Given $g \in G$, the \defbold{parabolic subgroup} $\para_G(g)$ is the set of $x \in G$ such that the set $\{g^nxg^{-n} \mid n \ge 0\}$ has compact closure; some important connections with the scale were shown by Baumgartner--Willis \cite{BaumgartnerWillis}.  In our present context, when $g$ is unbounded we relate $\para_G(g\inv)$ to the stabilizer of the attracting point of $g$.

\begin{thm}[See Section~\ref{sec:parabolic}]\label{thm:para_stable}
Let $X$ be an NPC space, let $G$ be a locally compact group acting continuously by isometries and let $\xi \in \partial X$.  Let $g \in G$ have unbounded action on $X$ with an attracting point $\xi$.  Then $\para_G(g\inv) \le G_{\xi}$; if $g$ is hyperbolic and the action of $G$ proper, then $\para_G(g\inv) = G_{\xi}$.
\end{thm}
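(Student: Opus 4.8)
The plan is to establish the two inclusions separately; only the second will use properness of the action and hyperbolicity of $g$.

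For $\para_G(g\inv) \le G_\xi$, fix a basepoint $y \in X$ and take $x \in \para_G(g\inv)$, so that $K := \overline{\{g^{-n}xg^{n} : n \ge 0\}}$ is compact. Writing $k_n := g^{-n}xg^{n} \in K$, one has $xg^n y = g^n k_n y$, hence
\[
d(g^n y,\, xg^n y) = d(g^n k_n y,\, g^n y) = d(k_n y,\, y) \le \sup_{k \in K} d(ky, y) =: R < \infty,
\]
the supremum being finite because $K$ is compact and the action continuous. Thus $(xg^n y)_{n\ge 0}$ stays within distance $R$ of $(g^n y)_{n \ge 0}$, which converges to $\xi$ in $X \cup \partial X$ by definition of the attracting point. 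A standard feature of the visual compactification — valid for complete CAT(0) spaces and for proper $\delta$-hyperbolic spaces alike — is that a sequence lying at bounded distance from a sequence converging to a boundary point converges to the same boundary point; hence $xg^n y \to \xi$. On the other hand $x$ extends to a homeomorphism of $X \cup \partial X$, so $xg^n y = x\cdot(g^n y) \to x\xi$. Uniqueness of limits gives $x\xi = \xi$, i.e. $x \in G_\xi$.

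For the reverse inclusion, assume $g$ is hyperbolic and the $G$-action proper, and let $x \in G_\xi$. Because the action is proper, a subset $S \subseteq G$ is relatively compact as soon as $\{d(hy, y) : h \in S\}$ is bounded for some $y \in X$; applied to $S = \{g^{-n}xg^n : n \ge 0\}$ with $d(g^{-n}xg^n y, y) = d(xg^n y, g^n y)$, this reduces the claim $x \in \para_G(g\inv)$ to showing that the forward orbit $(g^n y)_{n \ge 0}$ and its translate $(xg^n y)_{n \ge 0}$ fellow-travel, for a suitable $y$. In the CAT(0) case I would take $y$ on a translation axis $L$ of $g$ and let $c \colon [0,\infty) \to X$ be the ray from $y$ to $\xi$ running along $L$, so that $c(n|g|) = g^n y$. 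Since $x\xi = \xi$, the ray $c' := x \circ c$ issuing from $xy$ is asymptotic to $\xi$; then $t \mapsto d(c(t), c'(t))$ is convex — being a distance function between two geodesics in a CAT(0) space — and bounded — two rays asymptotic to a common boundary point lie at bounded distance — hence non-increasing, so $d(g^n y, xg^n y) = d\big(c(n|g|),\, c'(n|g|)\big) \le d(y, xy)$ for all $n \ge 0$, as needed.

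The $\delta$-hyperbolic case runs through stability of quasigeodesics instead of an axis. Fixing $y$, the orbit $(g^n y)_{n \in \Zb}$ is a $(\lambda, c)$-quasigeodesic line with endpoints $\xi_-(g)$ and $\xi$, and $(xg^n y)_{n \in \Zb}$ is a $(\lambda, c)$-quasigeodesic line with endpoints $x\xi_-(g)$ and $x\xi = \xi$. By the Morse lemma in the proper $\delta$-hyperbolic space $X$, each lies within Hausdorff distance $D = D(\delta, \lambda, c)$ of a genuine geodesic line; since two geodesic lines with a common endpoint at infinity have forward rays at bounded Hausdorff distance, the forward quasigeodesic rays $(g^n y)_{n \ge 0}$ and $(xg^n y)_{n \ge 0}$ lie within some bounded Hausdorff distance $A$ of each other. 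A parameter-matching argument then upgrades this to a uniform bound on $d(g^n y, xg^n y)$: both $n \mapsto d(y, g^n y)$ and $n \mapsto d(y, xg^n y)$ grow linearly in $n$ at rates differing by at most the constant $d(y, xy)$, so the index $m(n)$ with $d(xg^{m(n)} y,\, g^n y) \le A$ satisfies $|m(n) - n| \le C$ for a constant $C = C(\lambda, c, A, d(y,xy))$, whence $d(g^n y, xg^n y) \le A + d(g^{m(n)} y,\, g^n y) \le A + \lambda C + c$. The delicate points are exactly this last bookkeeping — making the quasigeodesic constants and the synchronization of the two rays precise, rather than just their Hausdorff closeness — and, in the first inclusion, the soft but not entirely trivial fact that bounded perturbations preserve limits in the visual boundary, which has to be argued uniformly across the (possibly non-proper) CAT(0) case and the proper $\delta$-hyperbolic case.
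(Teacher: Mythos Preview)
Your argument is correct and follows the same overall strategy as the paper: the first inclusion is identical (the paper just asserts the bounded-perturbation-of-a-convergent-sequence fact you spell out), and for the reverse inclusion the paper likewise reduces to bounding $d(xg^ny,g^ny)$ uniformly in $n$, handling the CAT(0) case exactly as you do via convexity of the distance between asymptotic rays (this is packaged as Proposition~\ref{prop:fixed_end_criterion}).

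The one place you and the paper diverge is the $\delta$-hyperbolic bookkeeping, and here the paper's route is cleaner and avoids precisely the ``delicate point'' you flag. Rather than tracking two separate quasi-geodesics $(g^ny)$ and $(xg^ny)$, approximating each by its own geodesic, and then re-synchronizing the parameters, the paper takes a single geodesic ray $\gamma$ from $y$ to $\xi$, finds $t_n$ with $d(g^ny,\gamma(t_n))$ bounded (Morse lemma), and then uses that $x\gamma$ is itself a geodesic ray to $x\xi=\xi$ with the \emph{same} parametrization; thin triangles give $d(\gamma(t),x\gamma(t))$ bounded for $t$ large, so $d(g^ny,xg^ny)\le 2D + d(\gamma(t_n),x\gamma(t_n))$ directly, with no index-matching needed. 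This is the same trick you already use in the CAT(0) case when you write $c' = x\circ c$; carrying it over to the hyperbolic case eliminates the step where you argue $|m(n)-n|\le C$ from closeness of $d(y,g^ny)$ and $d(y,g^{m(n)}y)$, which as written requires one more appeal to the Morse lemma to justify.
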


Theorem~\ref{thm:para_stable}, together with general scale theory results, provide equivalent ways to obtain the scale in terms of $G_\xi$.

\begin{cor}[{See Corollary~\ref{cor:parabolic_scale}}]
Let $X$ be an NPC space and let $G$ be a \tdlc group acting continuously by isometries. Suppose $g \in G$ has unbounded action, with an attracting point $\xi$.
\begin{enumerate}[(i)]
\item We have $s_G(g) = s_{G_{\xi}}(g)$.
\item If $G$ acts properly on $X$ and $g$ is hyperbolic, then $s_G(g) = \Delta_{G_{\xi}}(g)$.
\end{enumerate}
\end{cor}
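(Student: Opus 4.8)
The plan is to deduce both parts from Theorem~\ref{thm:para_stable} together with three standard facts from the theory of the scale: (1) the scale of $g$ is unchanged on passing to a closed subgroup of $G$ containing both $g$ and $\para_G(g\inv)$; (2) the identity $\Delta_H(h) = s_H(h)/s_H(h\inv)$ relating the modular function of a \tdlc group $H$ to the scale; and (3) the fact that $\para_H(h) = H$ forces $s_H(h) = 1$. I first note that $g$ fixes its attracting point $\xi$ --- from $g^{n+1}x \to \xi$ and continuity of the $G$-action on $X\cup\partial X$ --- so $g \in G_\xi$, and that $G_\xi$ is a closed, hence \tdlc, subgroup of $G$; both are needed for $s_{G_\xi}(g)$ and $\Delta_{G_\xi}(g)$ to be meaningful.

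For (i) --- the key instance of fact (1) --- let $U$ be a compact open subgroup of $G$ that is tidy for $g$ (Theorem~\ref{thm:Willis}), and put $U_+ := \bigcap_{n\ge0}g^nUg^{-n}$ and $U_- := \bigcap_{n\ge0}g^{-n}Ug^n$, so that by tidiness $U = U_+U_-$ and $U_{++} := \bigcup_{n\ge0}g^nU_+g^{-n}$ is closed. Every $x\in U_+$ satisfies $g^{-n}xg^n \in U$ for all $n\ge0$, so the conjugation orbit $\{(g\inv)^nx(g\inv)^{-n}:n\ge0\}$ is relatively compact; hence $U_+ \le \para_G(g\inv)$, which by Theorem~\ref{thm:para_stable} is contained in $G_\xi$. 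One then checks that $V := U\cap G_\xi$ is tidy for $g$ in $G_\xi$ with $V_+ = U_+$: the point is the containment $U_+\le G_\xi$, which makes $V = U_+(U_-\cap G_\xi)$ tidy above and $V_{++} = U_{++}$ --- a closed subset of $G$ lying inside $G_\xi$, hence closed in $G_\xi$ --- tidy below. The tidy-subgroup formula for the scale then gives $s_{G_\xi}(g) = [gV_+g\inv:V_+] = [gU_+g\inv:U_+] = s_G(g)$. (The inequality $s_{G_\xi}(g)\le s_G(g)$ is immediate on intersecting a minimizing subgroup of $G$ with $G_\xi$; it is the reverse inequality that requires the tidy structure, through the containment $U_+\le G_\xi$.)

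For (ii), combine (i) with the second assertion of Theorem~\ref{thm:para_stable}: since $g$ is hyperbolic and the action is proper, $G_\xi = \para_G(g\inv)$. As $G_\xi$ is closed, a subset of $G_\xi$ is relatively compact in $G_\xi$ exactly when it is relatively compact in $G$, so $\para_{G_\xi}(g\inv) = \para_G(g\inv)\cap G_\xi = G_\xi$; that is, the parabolic subgroup of $g\inv$ is the whole of $G_\xi$. Fact (3), applied with $H = G_\xi$ and $h = g\inv$, then gives $s_{G_\xi}(g\inv) = 1$, and fact (2) together with (i) yields $\Delta_{G_\xi}(g) = s_{G_\xi}(g)/s_{G_\xi}(g\inv) = s_G(g)$.

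There is essentially no new obstacle here: the geometric content lies entirely in Theorem~\ref{thm:para_stable}, and the corollary is bookkeeping on top of it. The care required is in (a) confirming that $G_\xi$ is genuinely a \tdlc group, so that $s_{G_\xi}$ and $\Delta_{G_\xi}$ are defined --- this should follow from the standing hypotheses on the action --- and (b) invoking facts (2) and (3) with the correct variable: it is $\para_G(g\inv)$, not $\para_G(g)$, that equals $G_\xi$, so it is $g\inv$ (not $g$) that is forced to have scale $1$ in $G_\xi$, and this must be matched against the chosen sign convention for the modular function. One should also keep in mind that the nontrivial half of (i) genuinely uses Willis's tidy-subgroup machinery, rather than merely the elementary monotonicity of the scale under passage to closed subgroups.
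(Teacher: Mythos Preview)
Your proposal is correct and follows the same overall route as the paper: both parts are deduced from Theorem~\ref{thm:para_stable} together with standard scale-theoretic facts. For part (ii) the arguments are identical in substance---the paper packages your facts (2) and (3) into Corollary~\ref{cor:scale_subgroup:modular}, which is proved exactly as you describe. For part (i) there is a minor difference of presentation: the paper simply cites Lemma~\ref{lem:scale_subgroup} (ultimately \cite[Proposition~3.21]{BaumgartnerWillis}, that the scale is determined by the closure of $\con_G(g^{-1})$), whereas you give a direct argument that $V=U\cap G_\xi$ is itself tidy for $g$ in $G_\xi$ with $V_+=U_+$. Your hands-on argument is self-contained and arguably more transparent here, while the paper's citation is shorter; neither approach requires anything the other does not already have available.
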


\subsection{Hyperbolic isometries of scale 1}

Our next main result is to give several equivalent criteria for when a hyperbolic element has scale $1$.

\begin{defn}
Let $X$ be an NPC space and let $G$ be a group acting on $X$ by isometries.  We write $\partial^{\hb}_X G$ for the set of points on the boundary of $X$ that are limit points of hyperbolic elements of $G$.  Given $\xi_1,\xi_2 \in \partial^{\hb}_X G$, we write $\xi_1 \rightarrow \xi_2$ if whenever $g \in G$ is hyperbolic with attracting point $\xi_1$, then $g$ has repelling point $\xi_2$.  If $\xi_1 \rightarrow \xi_2$ and $\xi_2 \rightarrow \xi_1$, we write $\xi_1 \leftrightarrow \xi_2$.
\end{defn}

\begin{thm}[See Section~\ref{sec:uniscalar_hyperbolic}]\label{thm:uniscalar_hyperbolic}
Let $X$ be an NPC space, let $G$ be a \tdlc group acting properly and continuously by isometries and let $g \in G$ be hyperbolic, with attracting point $\xi_+$ and repelling point $\xi_-$.  Then the following are equivalent:
\begin{enumerate}[(i)]
\item $s_G(g)=1$;
\item $s_{G_{\xi_+}}(g)=1$;
\item $\Delta_{G_{\xi_+}}(g) = 1$;
\item $G_{\xi_-}$ is open;
\item $G_{\xi_-,\xi_+}$ is open in $G_{\xi_+}$;
\item $G_{\xi_+} \le G_{\xi_-}$;
\item $\xi_+ \rightarrow \xi_-$.
\end{enumerate}
\end{thm}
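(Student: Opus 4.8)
The plan is to prove the equivalences by combining the general scale-theoretic dictionary with the geometric identification in Theorem~\ref{thm:para_stable}. First I would deploy the Corollary following Theorem~\ref{thm:para_stable}: since $g$ is hyperbolic and the action is proper, $\para_G(g\inv) = G_{\xi_+}$ and $\para_G(g) = G_{\xi_-}$, and the Corollary gives $s_G(g) = s_{G_{\xi_+}}(g) = \Delta_{G_{\xi_+}}(g)$. This immediately yields the equivalence of (i), (ii) and (iii), and it reduces everything to understanding when this common value is $1$. The next step is to recall the Baumgartner--Willis theory relating the scale to the parabolic and contraction subgroups: in general, for a \tdlc group $H$ and $h \in H$, one has $s_H(h) = 1$ if and only if $\para_H(h\inv)$ is open in $H$ (equivalently, the contraction group $\con_H(h)$ is relatively compact, equivalently $\para_H(h)\para_H(h\inv)$ is open). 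Applying this with $H = G$ and $h = g$ turns (i) into the statement that $\para_G(g) = G_{\xi_-}$ is open, which is (iv); applying the same criterion inside $H = G_{\xi_+}$ turns (ii) into the statement that $\para_{G_{\xi_+}}(g) = G_{\xi_+} \cap G_{\xi_-} = G_{\xi_+,\xi_-}$ is open in $G_{\xi_+}$, which is (v). (One must check $\para_{G_{\xi_+}}(g) = G_{\xi_+}\cap\para_G(g)$, but this is immediate from the definition of $\para$ as a condition on conjugates, intersected with the closed subgroup $G_{\xi_+}$.)

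It remains to weave in (vi) and (vii) and to close the loop with (iv) and (v). For (vi) $\Rightarrow$ (v): if $G_{\xi_+} \le G_{\xi_-}$ then $G_{\xi_+,\xi_-} = G_{\xi_+}$, which is trivially open in itself, so (v) holds. For (v) $\Rightarrow$ (vi): if $G_{\xi_+,\xi_-}$ is open in $G_{\xi_+}$, then it is a compact-by-discrete... more precisely it is an open subgroup, hence of finite covolume issues aside, we need that $g$ normalizes $G_{\xi_+,\xi_-}$ (clear, since $g$ fixes both $\xi_\pm$) and then use that $g$ acts on $G_{\xi_+}/G_{\xi_+,\xi_-}$; here the key point is that $g$ has scale $1$ in $G_{\xi_+}$ (from (v) via the Baumgartner--Willis criterion, i.e. (v) $\Rightarrow$ (ii)) forces $\con_{G_{\xi_+}}(g)$ and $\con_{G_{\xi_+}}(g\inv)$ to both be relatively compact, which together with openness of $G_{\xi_+,\xi_-}$ pushes all of $G_{\xi_+}$ into the stabilizer of $\xi_-$; I expect the cleanest route is actually to show (iv) $\Rightarrow$ (vi) directly using that $g \in \para_G(g) = G_{\xi_-}$ open implies $G_{\xi_+} = \overline{\para_G(g\inv)} \subseteq$ (relatively compact set times a group fixing $\xi_-$), and a compact set of isometries fixing $\xi_+$ with a bounded orbit issue at $\xi_-$ must fix $\xi_-$ as well — this last is a geometric lemma. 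Finally, (vi) $\Leftrightarrow$ (vii) should be essentially the definition of $\xi_+ \to \xi_-$ unwound through Theorem~\ref{thm:para_stable}: $\xi_+ \to \xi_-$ says every hyperbolic element with attracting point $\xi_+$ has repelling point $\xi_-$; since $g$ itself witnesses that $\xi_-$ is \emph{a} repelling point, and any $h \in G_{\xi_+}$ that is hyperbolic has its attracting or repelling point equal to $\xi_+$, one manipulates $h$, $hg^n$ for large $n$, or $g^n h g^{-n}$ to force $h$ into $G_{\xi_-}$.

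The main obstacle I anticipate is the implication $(\text{iv}) \Rightarrow (\text{vi})$, equivalently deducing $G_{\xi_+} \le G_{\xi_-}$ from the mere openness of $G_{\xi_-}$. Openness is a topological statement about $G$, whereas the conclusion is a concrete geometric containment; bridging them requires knowing that $G_{\xi_+}$, being $\overline{\para_G(g\inv)}$, is "not much bigger" than a relatively compact set — concretely, that $G_{\xi_+}/(G_{\xi_+}\cap G_{\xi_-})$ is compact, and then that an open subgroup of a group with this quotient compact must already be everything when combined with the dynamics of $g$. I would handle this by first establishing, as a separate geometric lemma, that if $h$ fixes $\xi_+$ and the orbit $\{g^n h g^{-n} : n \ge 0\}$ is bounded (which holds for $h \in \para_G(g\inv)$), then $h$ fixes $\xi_-$: intuitively, conjugating by $g^n$ "transports the action of $h$ towards $\xi_-$", and boundedness of the orbit of isometries forces the limiting behaviour at $\xi_-$ to be trivial; in the CAT(0) case this is a Busemann-function argument near $\xi_-$, and in the Gromov-hyperbolic case it follows from the contracting behaviour of geodesics towards $\xi_-$. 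Granting this lemma, $\para_G(g\inv) \subseteq G_{\xi_+,\xi_-}$, hence by Theorem~\ref{thm:para_stable} $G_{\xi_+} = \overline{\para_G(g\inv)} \subseteq G_{\xi_-}$ (using that $G_{\xi_-}$ is closed), which is (vi) — and notice this argument does not even need (iv), so in fact (vi) may hold unconditionally... which would collapse the theorem, so the lemma must actually require the extra hypothesis that the \emph{whole} of $G_{\xi_+}$ (not just $\para_G(g\inv)$) behaves well, and this is exactly where openness of $G_{\xi_-}$ re-enters: it guarantees $G_{\xi_-} \cap G_{\xi_+}$ is open in $G_{\xi_+}$, so $\para_G(g\inv)$, being dense in $G_{\xi_+}$ and contained in the open subgroup $G_{\xi_+,\xi_-}$, forces $G_{\xi_+} = G_{\xi_+,\xi_-} \le G_{\xi_-}$. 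Sorting out exactly which steps need which hypothesis is the delicate bookkeeping at the heart of the proof.
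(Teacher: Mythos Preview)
Your scaffolding for (i)--(v) is essentially the paper's: Corollary~\ref{cor:parabolic_scale} gives (i)$\Leftrightarrow$(ii)$\Leftrightarrow$(iii)$\Leftrightarrow$(iv), the trivial (vi)$\Rightarrow$(v) is fine, and (v)$\Rightarrow$(ii) by applying Corollary~\ref{cor:parabolic_scale}(iv) inside $G_{\xi_+}$ is exactly what the paper does. The equivalence (vi)$\Leftrightarrow$(vii) is also straightforward once stated cleanly (it is Corollary~\ref{cor:fixed_points_hyperbolic} in the paper): if $\xi_+\to\xi_-$ and $h\in G_{\xi_+}$, then $hgh^{-1}$ is hyperbolic with attracting point $\xi_+$ and repelling point $h\xi_-$, so $h\xi_-=\xi_-$; conversely $G_{\xi_+}\le G_{\xi_-}$ combined with Lemma~\ref{lem:fixed_points_hyperbolic} forces any hyperbolic element attracting to $\xi_+$ to repel from $\xi_-$.

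The genuine gap is exactly where you locate it: closing the loop by proving (iii)$\Rightarrow$(vi). Your attempted fixes do not work. The ``geometric lemma'' you propose (that $h\in\para_G(g^{-1})$ fixing $\xi_+$ must fix $\xi_-$) is, as you noticed, false in general. Your fallback density argument is circular: under the hypotheses of the theorem, $\para_G(g^{-1})$ is not merely dense in $G_{\xi_+}$, it \emph{equals} $G_{\xi_+}$ by Theorem~\ref{thm:para_stable}, so asserting $\para_G(g^{-1})\subseteq G_{\xi_+,\xi_-}$ is exactly the conclusion $G_{\xi_+}\le G_{\xi_-}$ you are trying to prove. No amount of bookkeeping with openness rescues this.

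The paper supplies the missing idea in Proposition~\ref{prop:modular}, and it is a Haar-measure argument rather than a pointwise geometric one. Suppose $\Delta_{G_{\xi_+}}(g)=1$ but some $h\in G_{\xi_+}$ moves $\xi_-$. Since $h\in G_{\xi_+}=\para_G(g^{-1})$, Proposition~\ref{prop:fixed_end_criterion} gives a uniform bound on $d(kg^nx,g^nx)$ for $k$ near $h$ and $n\gg 0$, producing compact sets $K_n=\bigcap_{m\ge n}g^mO_{r+c}g^{-m}$ with $K_n\subseteq K_{n+1}=gK_ng^{-1}$, and a neighbourhood of $h$ inside some $K_{n_0}$. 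On the other hand, $h\notin G_{\xi_-}=\para_G(g)$ means conjugation by \emph{negative} powers of $g$ escapes: one finds a neighbourhood $E'$ of $h$ contained in $K_n$ but disjoint from $K_{-n}$. Then $\mu(K_n)>\mu(K_{-n})$ while $K_n=g^{2n}K_{-n}g^{-2n}$, contradicting $\Delta_{G_{\xi_+}}(g)=1$. This asymmetry-of-conjugation-versus-unimodularity trick is the heart of the matter, and nothing in your proposal approximates it.
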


\begin{cor}
Let $X$ be an NPC space, let $G$ be a \tdlc group acting properly and continuously by isometries, let $\xi \in \partial^{\hb}_X G$ and let $S^+_{\xi}$ be the set of hyperbolic elements of $G$ with attracting point $\xi$.  Then either $s_G(g)=1$ for all $g \in S^+_{\xi}$ (if $\xi \rightarrow \xi'$ for some $\xi' \in \partial^{\hb}_X G$) or $s_G(g)>1$ for all $g \in S^+_{\xi}$ (otherwise).  Moreover, either $s_G(g\inv)=1$ for all $g \in S^+_{\xi}$ (if $G_{\xi}$ is open) or $s_G(g\inv)>1$ for all $g \in S^+_{\xi}$ (if $G_{\xi}$ is not open).
\end{cor}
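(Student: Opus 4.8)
The plan is to read off both statements from Theorem~\ref{thm:uniscalar_hyperbolic}, applied to $g$ for the first assertion and to $g\inv$ for the second. Throughout I use the elementary fact that an isometry is hyperbolic precisely when its inverse is, and that passing to inverses swaps attracting and repelling points; in particular $\partial^{\hb}_X G$ is invariant under this operation, so for any $g \in S^+_{\xi}$ the repelling point $\xi_-(g)$ again lies in $\partial^{\hb}_X G$ (being the attracting point of the hyperbolic element $g\inv$). Note also that $S^+_{\xi}$ is nonempty: since $\xi \in \partial^{\hb}_X G$, it is the attracting point of some hyperbolic $g$, or else the repelling point of some hyperbolic $g$, in which case $g\inv \in S^+_{\xi}$.

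For the first assertion, fix $g \in S^+_{\xi}$ and put $\zeta := \xi_-(g) \in \partial^{\hb}_X G$. By the equivalence (i)$\Leftrightarrow$(vii) of Theorem~\ref{thm:uniscalar_hyperbolic}, applied to $g$ (whose attracting and repelling points are $\xi$ and $\zeta$), we have $s_G(g) = 1$ if and only if $\xi \rightarrow \zeta$. Suppose first that $\xi \rightarrow \xi'$ for some $\xi' \in \partial^{\hb}_X G$. By the definition of the relation $\rightarrow$, every hyperbolic element of $G$ with attracting point $\xi$ has repelling point $\xi'$; applied to $g$ this gives $\zeta = \xi'$, hence $\xi \rightarrow \zeta$ and $s_G(g) = 1$. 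As $g$ was an arbitrary element of $S^+_{\xi}$, this proves $s_G(g) = 1$ for all $g \in S^+_{\xi}$. Conversely, if there is no $\xi' \in \partial^{\hb}_X G$ with $\xi \rightarrow \xi'$, then in particular $\xi \not\rightarrow \zeta$, and the same equivalence yields $s_G(g) > 1$; again $g$ was arbitrary.

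For the second assertion, fix $g \in S^+_{\xi}$; then $g\inv$ is a hyperbolic element of $G$ with attracting point $\xi_-(g)$ and repelling point $\xi$. Applying the equivalence (i)$\Leftrightarrow$(iv) of Theorem~\ref{thm:uniscalar_hyperbolic} to $g\inv$ --- whose repelling point is $\xi$ --- shows that $s_G(g\inv) = 1$ if and only if $G_{\xi}$ is open. Since the condition ``$G_{\xi}$ is open'' does not involve the choice of $g \in S^+_{\xi}$, the stated dichotomy follows at once. The whole argument is essentially bookkeeping given Theorem~\ref{thm:uniscalar_hyperbolic}; the only point needing care is to keep track of which boundary point is attracting and which is repelling when $g$ is replaced by $g\inv$, together with the observation that $\partial^{\hb}_X G$ is closed under inversion so that the relevant points remain in the domain of the relation $\rightarrow$.
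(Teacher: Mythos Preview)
Your proof is correct and matches the paper's intended argument: the corollary is stated immediately after Theorem~\ref{thm:uniscalar_hyperbolic} without a separate proof, precisely because it is read off from the equivalences (i)$\Leftrightarrow$(vii) and (i)$\Leftrightarrow$(iv) exactly as you do. Your bookkeeping about inverses and about $\partial^{\hb}_X G$ being closed under $g \mapsto g\inv$ is accurate and is all that is needed.
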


\begin{rem}
When $G$ is a hyperbolic \tdlc group and $X$ is one of its Cayley--Abels graphs, the equivalence of (i) and (iv) was effectively proved by \cite[Lemma 21]{BMW}.

The relation $\rightarrow$ has the following structure on $\partial^{\hb}_X G$:
\begin{itemize}
\item Say $\xi \in \partial^{\hb}_X G$ is \defbold{uniscalar} for $G$ if there is some $g \in G$ hyperbolic, with $\xi_+(g) = \xi$, such that $s_G(g) = s_G(g\inv)=1$.  Then the set $\partial^{\hb,u}_X G$ of uniscalar limit points decomposes into disjoint pairs $\xi \leftrightarrow \xi^*$, with $G_{\xi} = G_{\xi^*}$ being open in $G$ and no other arrows incident with $\partial^{\hb,u}_X G$.

Thus there is an involution on $\partial^{\hb,u}_X G$ that commutes with the natural action of $G$, which can be thought of as an `inverse at infinity' for the uniscalar hyperbolic elements of $G$.  For example, if $G$ is a finitely generated hyperbolic group acting by left translation on one of its Cayley graphs $X$ (with vertices identified with $G$) and $g \in G$ has infinite order (and hence $g$ is hyperbolic on $X$, see \cite[Chapitre 9.3]{CDP}), then $\xi = \xi_+(g)$ is represented as $\lim_{n \rightarrow +\infty} g^n$, and the `inverse' of $\xi$ is $\xi_-(g) = \lim_{n \rightarrow +\infty} g^{-n}$.
\item Let $\xi \in \partial^{\hb}_X G$ and write $S^+_\xi$ for the set of hyperbolic $g \in G$ with attracting point $\xi$.  Suppose there is $g \in S^+_\xi$ such that $s_G(g)>1$ but $s_G(g\inv)=1$, and let $R_\xi$ be the set of repelling points of $h \in S^+_\xi$.  Then $G_\xi$ is open and its orbits on $R_\xi$ are uncountable.  The arrows incident with $\xi$ consist of an arrow from each element of $R_\xi$ to $\xi$.
\item Suppose there is a hyperbolic $g \in G$ with attracting point $\xi$, such that $s_G(g)>1$ and $s_G(g\inv)>1$.  Then there are no arrows incident with $\xi$.
\end{itemize}
\end{rem}

\subsection{Axis trees}

Suppose for this subsection that $X$ is uniquely geodesic and that $g \in G$ translates along an axis.  We note that can interpret the scale of $g$ in terms of the amount of branching of $T$ going away from $\xi_+(g)$, where $T$ is a certain locally finite tree embedded in $X$.  The tree is also a close relative of the tree representation in the sense of scale theory, as introduced in \cite[\S4]{BaumgartnerWillis}.

\begin{defn}\label{defn:axis_tree}
Let $X$ be a uniquely geodesic NPC space and let $G$ be a \tdlc group acting properly and continuously by isometries on $X$ with open point stabilizers.  Suppose $\rho: \Rb \rightarrow X$ is a geodesic line and $g \in G$ is such that $g\rho(t) = \rho(t+|g|)$ for all $t \in \Rb$, and write $\xi = \lim_{t \rightarrow +\infty}\rho(t)$ (equivalently, $\xi$ is the attracting point of $g$).  Let $L_{G,\rho}$ be the set of axes of translation of $G$ that contain $\rho([t,+\infty))$ for some $t \in \Rb$, and let $T:=T_{G,\rho}$ be the union of $L_{G,\rho}$.  We take $T$ as a topological subspace of $X$, but equipped with the metric $d_T$, where for $x,y \in T$ we write $d_T(x,y)$ for the length of the shortest path from $x$ to $y$ within $T$, and equipped with $\xi$ as a distinguished end of $T$.

Given $x,y \in T$ we write $x \le_T y$ if there is a geodesic ray from $x$ to $\xi$ contained in $T$ that passes through $y$.  Take $x_0 \in T$ and define the branching function of $T$ based at $x_0$ as follows:
\[
\sigma_{T,x_0}: \Rb \rightarrow \Nb; \; \sigma_{T,x_0}(m) =
\begin{cases}
		|\{y \in T \mid y \le_T x_0, d_T(x_0,y) = m\}| &\mbox{if} \;  m > 0 \\ 
		1 &\text{otherwise}
\end{cases}.
\]
\end{defn}

\begin{figure}

\caption{The scale in terms of axes sharing a ray}
\label{fig:axis_tree}
\begin{center}
\begin{tikzpicture}[use Hobby shortcut]

\draw (4,-3) node{$gx_i = x_{i+1}; \quad s_G(g) = |G_{x_2,\xi_+(g)}:G_{x_1,\xi_+(g)}| = \sigma_{T,x_2}(|g|) = 9.$};

\tikzstyle{every node}=[circle,
                        inner sep=0pt, minimum width=3pt]

\draw[thick, name path = axis] (0,0) to (7,0);
\draw[name path = circle] (4.25,0) circle (2.0);
\draw (-1.5,0) node{$\xi_+(g)$};
\draw (-0.5,0) node{$\ldots$};
\draw (7.5,0) node{$\ldots$};
\draw (8.5,0) node{$\xi_+(g)$};

\foreach \x in {0,...,4} {
\ifthenelse{\x < 4}{
\draw (0.25+2*\x,0) node[draw=black, fill=black, minimum width=5pt]{};
\draw (2*\x,0.25) node{$x_{\x}$};
}{};
\foreach \y in {1,-1} {
\draw[thick, name path = a\x\y] (0+\x,2*\y) .. (0.5+\x,1*\y) .. (1+\x,0.6*\y) .. (1.5+\x,0.2*\y) ..(2+\x,0) -- (2.5+\x,0);
\ifthenelse{\x < 3}{\draw[name intersections = {of = circle and a\x\y}] (intersection-1) node[draw=black, fill=black]{}}{};
\draw[thick, name path = b\x\y] (0.5+\x,2*\y) .. (0.75+\x,1*\y) .. (1+\x,0.6*\y);
\ifthenelse{\x < 3}{\draw[name intersections = {of = circle and b\x\y}] (intersection-1) node[draw=black, fill=black]{}}{};
\draw[thick, name path = c\x\y] (0+\x,1.25*\y) .. (0.25+\x,1*\y) .. (1+\x,0.6*\y);
\ifthenelse{\x < 3}{\draw[name intersections = {of = circle and c\x\y}] (intersection-1) node[draw=black, fill=black]{}}{};
}
}
\end{tikzpicture}
\end{center}
\end{figure}
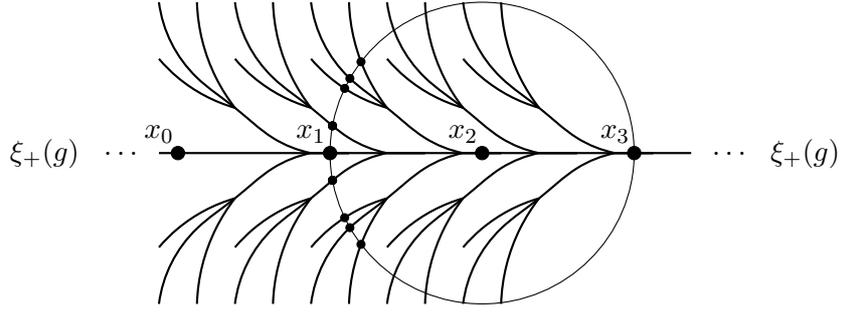

\begin{thm}[See Section~\ref{sec:axis_tree}]\label{thm:axis_tree}
Retain the hypotheses and notation of Definition~\ref{defn:axis_tree}.  Let $H$ be the stabilizer of $T_{G,\rho}$ in $G_\xi$, and let $S$ be the set of elements of $G$ with attracting end $\xi$ and possessing an axis that belongs to $L_{G,\rho}$.  Let $\lambda = \inf\{|h| \mid h \in S\}$.  Then $\lambda>0$ and the following holds.
\begin{enumerate}[(i)]
\item The group $H$ is an open subgroup of $G_{\xi}$ with $g \in S \subseteq H$, such that $s_H(h) = s_G(h)$ for all $h \in S$.
\item Every unbounded element of $H$ has an axis contained in $T$.  Moreover, there is a continuous homomorphism $\beta: H \rightarrow \Rb$ with image $\lambda \Zb$ such that $|\beta(h)| = |h|$ and $h \in S$ if and only if $\beta(h)<0$.
\item The space $T$ is embedded as a closed path-connected subspace of $X$.  As a metric space, $(T,d_T)$ is a geometric realization of a locally finite tree (in the combinatorial sense) with finitely many edge lengths; the sum of the edge lengths, taking one length for each $H$-orbit, is $\lambda$.
\item $H$ acts continuously, properly and cocompactly by isometries on $(T,d_T)$.
\item For all $h \in H$ and $x_0 \in T$, we have $s_H(h) = \sigma_{T,x_0}(-\beta(h))$.
\end{enumerate}
\end{thm}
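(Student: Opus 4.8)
The plan is first to pin down the metric and combinatorial geometry of $(T,d_T)$, then to feed properness of the $G$-action into it to obtain local finiteness and cocompactness, and finally to translate everything into the language of the scale. Fix $x_0\in\rho$ and let $b_\xi$ be the Busemann function of $X$ centred at $\xi$ and normalised along $\rho$. Since every axis in $L_{G,\rho}$ literally contains a tail of $\rho$, any two of them overlap along a ray, and unique geodesity forces the intersection of two distinct members of $L_{G,\rho}$ to be a single geodesic ray converging to $\xi$; hence no two of them cross, $T$ is path-connected, and from each point of $T$ there is a unique ray in $T$ towards $\xi$, eventually coinciding with a tail of $\rho$. It follows readily that $(T,d_T)$ is a geodesic $0$-hyperbolic space — an $\Rb$-tree — with $\xi$ a genuine end, and that $d_T$ agrees with $d_X$ along each individual axis. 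Next, $b_\xi$ is convex, $1$-Lipschitz, and has slope $-1$ along $\rho$, hence slope $-1$ along every axis in $L_{G,\rho}$; since every element of $H$ fixes $\xi$ it shifts $b_\xi$ by a constant, and we set $\beta\colon H\to\Rb$, $b_\xi(hz)=b_\xi(z)+\beta(h)$, a homomorphism with $\beta(g)=-|g|$. An $h\in H$ acts on the $\Rb$-tree $T$ either elliptically (bounded $d_T$-orbits, equivalently, as $d_X\le d_T$, bounded $X$-orbits, equivalently $\beta(h)=0$) or hyperbolically; in the hyperbolic case the $d_T$-axis $A$ of $h$ must have $\xi$ as an endpoint, so $A$ coincides with a tail of $\rho$ near $\xi$, $b_\xi$ has slope $-1$ along $A$, and sandwiching $d_X(x,h^nx)$ between $|b_\xi(x)-b_\xi(h^nx)|=n|\beta(h)|$ and $d_T(x,h^nx)=n|h|_{d_T}$ gives $|\beta(h)|=|h|_{d_T}=|h|$ and shows $A\subseteq T$ is a genuine $d_X$-axis of $h$ (the $d_T$- and $d_X$-geodesics from $x$ to $hx$ must coincide). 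This proves the first sentence of (ii); $h\in S\iff\beta(h)<0$ follows by tracking signs, and $\beta$ is continuous because $\ker\beta$ contains the open subgroup $H_{x_0}=H\cap G_{x_0}$.

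\textbf{Step 2 ($\lambda>0$ and $\mathrm{im}(\beta)=\lambda\Zb$, via properness).} Conjugating by a power of $g$ carries any $h\in S$ to an element of $S$ of the same translation length whose axis runs through $x_0$, so $d_X(x_0,hx_0)=|h|$. If there were such $h_n$ with $|h_n|\to0$, properness would confine the $h_n$ to a fixed compact subset of $G$; a convergent subnet would have a limit fixing $x_0$, so (as $G_{x_0}$ is open) $h_n\in G_{x_0}$ for large $n$ — impossible since each $h_n$ is unbounded. Hence $\lambda:=\inf\{|h|:h\in S\}>0$. As $\mathrm{im}(\beta)\le\Rb$ and, by ``$h\in S\iff\beta(h)<0$'' together with $|\beta(h)|=|h|\ge\lambda$ for $h\in S$, it meets $(-\lambda,\lambda)$ only in $0$, it is discrete; its least positive element is $\inf\{|h|:h\in S\}=\lambda$, so $\mathrm{im}(\beta)=\lambda\Zb$. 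This completes (ii).

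\textbf{Step 3 (local finiteness, cocompactness, (iii)–(iv)).} Since $L_{G,\rho}$ (hence $T$, $H$, $S$, $\lambda$) depends only on the ray $\rho([t,+\infty))$, we may assume $|g|=\lambda$ and $g\rho(t)=\rho(t+\lambda)$. For a vertex $v\in\rho$ and an axis $\alpha\in L_{G,\rho}$ branching off $\rho$ at $v$, the element $k_\alpha\in S$ with axis $\alpha$ satisfies $k_\alpha v=g^{|k_\alpha|/\lambda}v$, so $u_\alpha:=g^{-|k_\alpha|/\lambda}k_\alpha\in H_v$ carries the downward direction of $\rho$ at $v$ to that of $\alpha$; thus $H_v$ acts transitively on the downward directions at $v$. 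In a uniquely geodesic space the stabiliser in $H_v$ of such a direction contains a point stabiliser $G_y\cap H$ ($y$ just below $v$ along it), hence is open in the compact group $H_v$ and of finite index, so there are finitely many downward directions at $v$; and every edge of $T$ is $H$-equivalent to an edge of $\rho$ (push it towards $\xi$ by a power of the element of $S$ whose axis contains it), so every vertex is $H$-equivalent to a vertex of $\rho$, and local finiteness holds everywhere. Together with the relationship between $T$ and the tree representation of $\grp{g}$ in the sense of \cite{BaumgartnerWillis} (whose valences are controlled by the finite integers $s_G(g^{\pm1})$) plus a compactness argument from properness, this also gives that the branch locus is discrete, so $(T,d_T)$ is the geometric realisation of a locally finite simplicial tree with finitely many edge lengths; and $Hx_0$ is coarsely dense in $T$ (push any point towards $\xi$ by a power of an appropriate element of $S$, then by a power of $g$), so $H$ acts cocompactly on $(T,d_T)$. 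As $H$ is closed in $G$ (it is open in the closed subgroup $G_\xi$ — see Step 4), it acts properly on $X$, hence on $(T,d_T)$ via $d_X\le d_T$, and $T=HK$ for a $d_T$-compact, hence $X$-compact, set $K$ is therefore a closed orbit in $X$. Finally, the single $g$-period of $\rho$ contains exactly one edge of each $H$-orbit: at least one because every edge is $H$-equivalent to an edge of $\rho$ and $g$ translates $\rho$ by one period, at most one because two $H$-equivalent edges of $\rho$ in one period would be identified by an element fixing $\xi$ and shifting $\rho$ by a nonzero element of $\mathrm{im}(\beta)\cap(-\lambda,\lambda)=\{0\}$ — absurd. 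Hence the orbit edge lengths sum to $\lambda$, proving (iii) and (iv).

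\textbf{Step 4 (the scale — (i) and (v)) and the main obstacle.} For (i): $G_{x_0,\xi}=G_{x_0}\cap G_\xi$ fixes the $\xi$-ward ray from $x_0$ and so stabilises $T$, whence $G_{x_0,\xi}\le H$; as $G_{x_0,\xi}$ is open in $G_\xi$, so is $H$, and $g\in S\subseteq H$ was noted. If $L$ is a \tdlc group and $H\le L$ is open, then $s_H=s_L|_H$ (compare a minimising compact open $U\le L$ with $U\cap H$, using $|A\cap C:B\cap C|\le|A:B|$ for $B\le A$, $C$ a subgroup), so with $L=G_\xi$ and Corollary~\ref{cor:parabolic_scale}(i) we get $s_H(h)=s_{G_\xi}(h)=s_G(h)$ for $h\in S$. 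For (v): if $\beta(h)=0$ then $h$ has bounded $X$-orbits, hence is uniscalar in $H$, and $\sigma_{T,x_0}(-\beta(h))=1$; if $\beta(h)>0$ then, taking $U=H_v$ for a vertex $v$ on a $T$-axis of $h$, one has $hUh^{-1}\subseteq U$ (because $H_{hv}\le G_\xi$ fixes the $\xi$-ward ray from $hv$, which passes through $v$), so $s_H(h)=1=\sigma_{T,x_0}(-\beta(h))$. If $\beta(h)<0$, take $U=H_{x_0}$ with $x_0$ a vertex of the $T$-axis $A$ of $h$; then $U\cap h^nUh^{-n}=H_{[x_0,h^nx_0]}$, and $|h^nUh^{-n}:U\cap h^nUh^{-n}|$ is the size of the $H_{h^nx_0}$-orbit of $x_0$, which by the transitivity of Step 3 — applied repeatedly down the tree, using $H_w\le H_v$ whenever $v$ lies on the $\xi$-ward ray from $w$ — equals $\sigma_{T,h^nx_0}(n|h|)=\sigma_{T,x_0}(n|h|)$; the same transitivity forces $\sigma_{T,x_0}(n|h|)=\sigma_{T,x_0}(|h|)^n$ (break a downward path of length $n|h|$ at depth $|h|$, the intermediate points all lying in $Hx_0$), so by the standard limit formula $s_H(h)=\lim_n|h^nUh^{-n}:U\cap h^nUh^{-n}|^{1/n}$ (cf.\ \cite{Willis94}) we obtain $s_H(h)=\sigma_{T,x_0}(|h|)=\sigma_{T,x_0}(-\beta(h))$; the same transitivity also shows $\sigma_{T,\cdot}(m)$ is constant on $T$ for $m\in\lambda\Zb$, making (v) independent of $x_0$. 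The heart of the proof — and the main obstacle — is Step 3: extracting from properness the \emph{genuine} local finiteness and discreteness of the branch locus of $T$, the $H$-cocompactness, and the closedness of $T$ in $X$, and doing all of this uniformly over both classes of NPC space (in the $\delta$-hyperbolic case the Alexandrov-angle reasoning available for CAT(0) spaces must be replaced by thin-triangle estimates). The positivity $\lambda>0$ is the conceptual key that unlocks it.
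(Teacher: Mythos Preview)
Your overall architecture matches the paper's: build the $\Rb$-tree, define the Busemann homomorphism $\beta$, prove $\lambda>0$, extract the combinatorial tree structure, then read off the scale. Your Step~2 argument for $\lambda>0$ (conjugate into a fixed point, invoke properness and openness of $G_{x_0}$) is actually cleaner than the paper's, which deduces $\lambda>0$ only \emph{after} establishing the discrete edge structure. Your Step~4(v) works but is more elaborate than needed: once you know $U=H_{x_0}$ satisfies $U\le hUh^{-1}$ for $h\in S$, $U$ is already tidy for $h$, so $s_H(h)=|hUh^{-1}:U|$ directly and the limit formula and the multiplicativity $\sigma_{T,x_0}(n|h|)=\sigma_{T,x_0}(|h|)^n$ are unnecessary detours.

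The genuine gap is in Step~3. You establish finite valence at each branch point $v$ (via transitivity of $H_v$ on downward directions plus open point stabilisers), but finite valence at each branch point does \emph{not} by itself give discreteness of the branch locus: a priori branch points could accumulate along $\rho$. Your appeal to ``the relationship between $T$ and the tree representation of $\grp{g}$ in the sense of \cite{BaumgartnerWillis}'' does not fill this: the Baumgartner--Willis tree is an abstract coset tree, and identifying it with $T$ is essentially what you are trying to prove, so the reference is circular. The paper's fix is short and concrete: every $H$-orbit on $T$ meets the segment $Y=[\rho(0),\rho(|g|)]$; since $H_{\rho(0)}$ fixes $Y$ (unique geodesic to $\xi$), the $H$-translates of $Y$ are indexed by $H/H_{\rho(0)}$; properness then forces only finitely many translates to meet any ball $B_r$; hence $T\cap B_r$ lies in finitely many line segments, which gives at once discreteness of the branch locus, local finiteness, closedness of $T$ in $X$, and cocompactness. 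This single covering argument replaces both your hand-wave and your worry about separate CAT(0)/$\delta$-hyperbolic treatments --- the paper makes no case split, relying only on unique geodesity. You should also note that the paper handles the case $s_G(g)=1$ separately (then $T$ is a line by Theorem~\ref{thm:uniscalar_hyperbolic}, and everything is trivial); your argument implicitly assumes there \emph{are} branch points.
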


An example of how the scale manifests geometrically in shown in Figure~\ref{fig:axis_tree}.  The large dots indicate an orbit $(x_i)$ of the hyperbolic element $g \in G$ along an axis $\gamma$.  The lines show a part of $T = T_{G,\gamma_+}$, and then starting from any point $x \in T$, we can calculate $s_G(g)$ by counting the number of points $y$ at distance $|g|$ from $x$ such that $y \le_T x$.

\subsection{Tidy subgroups}

As noted earlier, a key component of scale theory is the structure of compact open subgroups $U$ that minimize the index $|gUg\inv: U \cap gUg\inv|$, for a given $g \in G$.  In the case that $G$ is a \tdlc group acting properly and continuously by isometries on an NPC space $X$ and $g \in G$ is hyperbolic, we can give a variant of Willis's criteria for $U$ to be minimizing.

\begin{thm}[See Section~\ref{sec:tidy}]\label{thm:intro_tidy}
Let $X$ be an NPC space, let $G$ be a \tdlc group acting properly and continuously by isometries; let $g \in G$ be hyperbolic, with attracting point $\xi_+$ and repelling point $\xi_-$; and let $U$ be a compact open subgroup of $G$.  Then $U$ is minimizing for $g$ if and only if it satisfies the following conditions:
\begin{enumerate}
\item[$\GTA$] $U = U_{\xi_+}U_{\xi_-}$;
\item[$\GTp$] $g\inv U_{\xi_+}g \le U$;
\item[$\GTm$] $gU_{\xi_-}g\inv \le U$.
\end{enumerate}
Moreover, if $U$ is minimizing for $g$, then $U_{\xi_+} = U_{g+}$ and $U_{\xi_-} = U_{g-}$.
\end{thm}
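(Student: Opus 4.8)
The plan is to reduce the equivalence to Willis's characterisation of minimising subgroups (Theorem~\ref{thm:Willis}), which states that a compact open $U$ is minimising for $g$ exactly when it satisfies Willis's tidy-above condition $\TA$, $U = U_{g+}U_{g-}$, and tidy-below condition $\TB$, $\bigcup_{n \ge 0}g^{n}U_{g+}g^{-n}$ is closed; here $U_{g+} = \bigcap_{n \ge 0}g^{n}Ug^{-n}$ and $U_{g-} = \bigcap_{n \ge 0}g^{-n}Ug^{n}$ are the Willis subgroups, while $U_{\xi_+} = U \cap G_{\xi_+}$ and $U_{\xi_-} = U \cap G_{\xi_-}$. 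Everything hinges on the identity $U_{\xi_+} = U_{g+}$ (and its mirror for $\xi_-$), so I would first record the inclusion that holds for \emph{any} compact open $U$: as $g$ is hyperbolic we have $g\xi_+ = \xi_+$ and $g\xi_- = \xi_-$, and any $u \in U_{g+}$ has $\{g^{-n}ug^{n} \mid n \ge 0\} \subseteq U$ relatively compact, hence $u \in \para_G(g\inv) = G_{\xi_+}$ by Theorem~\ref{thm:para_stable} (using that the action is proper and $g$ hyperbolic); so $U_{g+} \le U \cap G_{\xi_+} = U_{\xi_+}$, and symmetrically $U_{g-} \le U_{\xi_-}$.

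For the first implication, suppose $U$ satisfies $\GTA$, $\GTp$ and $\GTm$. Since $g$ fixes $\xi_+$, conjugation by $g\inv$ preserves $G_{\xi_+}$, so $\GTp$ forces $g\inv U_{\xi_+}g \le U \cap G_{\xi_+} = U_{\xi_+}$; iterating, $U_{\xi_+} \le g^{n}U_{\xi_+}g^{-n} \le g^{n}Ug^{-n}$ for all $n \ge 0$, hence $U_{\xi_+} \le U_{g+}$, and with the free inclusion $U_{\xi_+} = U_{g+}$; likewise $\GTm$ gives $U_{\xi_-} = U_{g-}$, so $\GTA$ is precisely $\TA$. For $\TB$, put $V := \bigcup_{n \ge 0}g^{n}U_{g+}g^{-n}$, an ascending union of subgroups of $G_{\xi_+}$ (again because $g\xi_+ = \xi_+$); any $h \in g^{n}U_{g+}g^{-n} \cap U$ lies in $U \cap G_{\xi_+} = U_{\xi_+} = U_{g+}$, so $V \cap U = U_{g+}$, a compact open subgroup of $V$. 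Then $V$ is locally compact, hence closed in the Hausdorff group $G$, which is $\TB$. By Theorem~\ref{thm:Willis}, $U$ is minimising, and we have proved $U_{\xi_+} = U_{g+}$ and $U_{\xi_-} = U_{g-}$ in passing.

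For the converse, suppose $U$ is minimising. Theorem~\ref{thm:Willis} gives $\TA$, so $U = U_{g+}U_{g-} \subseteq U_{\xi_+}U_{\xi_-} \subseteq U$ by the free inclusions, which is $\GTA$. The conditions $\GTp$, $\GTm$ and the final assertion then all come down to proving the \emph{reverse} inclusion $U_{\xi_+} \le U_{g+}$ for minimising $U$: granted that, $g\inv U_{\xi_+}g = g\inv U_{g+}g \le U_{g+} \le U$ is $\GTp$ (the inclusion $g\inv U_{g+}g \le U_{g+}$ being automatic), and the $\xi_-$ statements follow by applying the same to $g\inv$, which is minimising with attracting point $\xi_-$. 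This inclusion --- that a minimising $U$ absorbs its $\xi_+$-stabiliser under conjugation by $g\inv$ --- is the main obstacle, and I would treat it by a geometric analogue of Willis's tidying procedure. A compact group of isometries of $X$ has a bounded orbit, hence a fixed point when $X$ is complete CAT(0) and an almost-fixed point when $X$ is proper $\delta$-hyperbolic; the key step is to show that, for a minimising $U$, such a point $p$ may be chosen so that $gp$ lies on (respectively boundedly near) the ray from $p$ to $\xi_+$. Given that, any $u \in U_{\xi_+}$ fixes $p$ and that ray, so it fixes (respectively coarsely fixes) $g^{n}p$ for all $n \ge 0$, forcing $g^{-n}ug^{n} \in U$ for all $n \ge 0$, i.e.\ $u \in U_{g+}$. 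Alternatively, one may invoke the Baumgartner--Willis analysis \cite{BaumgartnerWillis} of the parabolic subgroup attached to a tidy subgroup, which gives $\para_G(g\inv) \cap U = U_{g+}$ for tidy $U$, and combine it with $\para_G(g\inv) = G_{\xi_+}$ from Theorem~\ref{thm:para_stable}. Either way $U_{\xi_+} = U_{g+}$, and the proof is complete.
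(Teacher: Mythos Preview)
Your proposal is correct. For the converse direction (minimising $\Rightarrow$ $\GTA,\GTp,\GTm$), your ``alternative'' via Baumgartner--Willis is exactly the paper's argument: the paper invokes Lemma~\ref{lem:parabolic_tidy} (which is the statement $U \cap \para_G(g\inv) = U_{g+}$ for tidy $U$) together with Theorem~\ref{thm:para_stable} to conclude $U_{\xi_+} = U_{g+}$, from which everything follows. Your first, geometric sketch for this direction is incomplete: the claim that a minimising $U$ admits a fixed point $p$ with $gp$ on the ray from $p$ to $\xi_+$ is not justified, and in the $\delta$-hyperbolic case the passage from ``$u$ coarsely fixes $g^n p$'' to ``$g^{-n}ug^n \in U$'' would need a uniform bound that you have not supplied. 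You should simply commit to the Baumgartner--Willis route here.

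For the forward direction ($\GTA,\GTp,\GTm \Rightarrow$ minimising) you take a genuinely different path from the paper. You first upgrade $\GTp,\GTm$ to the equalities $U_{\xi_\pm} = U_{g\pm}$, so that $\GTA$ becomes Willis's $\TA$, and then verify $\TB$ by observing that $V = \bigcup_{n\ge 0} g^n U_{g+}g^{-n}$ satisfies $V \cap U = U_{g+}$ (any element of $V$ lies in $G_{\xi_+}$, so if also in $U$ it lies in $U_{\xi_+} = U_{g+}$), whence $V$ has a compact open subgroup and is therefore closed. The paper instead bypasses $\TB$ entirely: from $\GTp,\GTm$ it deduces $U_{\xi_+}\, gU_{\xi_-}g\inv \subseteq U \cap gUg\inv$ and then bounds the displacement index directly by a Haar measure computation,
\[
|gUg\inv : U \cap gUg\inv| \le |gU_{\xi_+}g\inv : U_{\xi_+}| = \Delta_{G_{\xi_+}}(g) = s_G(g),
\]
the last equality being Corollary~\ref{cor:parabolic_scale}(iii). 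The paper's argument is shorter and more quantitative (it never touches $\TB$), while yours is more structural and has the pleasant feature of establishing $U_{\xi_\pm} = U_{g\pm}$ already under $\GTp,\GTm$ alone, without assuming $U$ minimising.
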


In particular, although $G_{\xi_+}$, respectively $G_{\xi_-}$, is only an identity neighbourhood in $G$ when $s_G(g\inv)=1$, respectively $s_G(g)=1$, the product of $G_{\xi_+}$ and $G_{\xi_-}$ is an identity neighbourhood regardless of the scale.

\begin{cor}[See Section~\ref{sec:tidy}]\label{cor:intro_tidy_neighbourhood}
Let $X$ be an NPC space, let $G$ be a \tdlc group acting properly and continuously by isometries and let $g \in G$ be hyperbolic, with attracting and repelling ends $\xi_+$ and $\xi_-$ respectively, and let $U \le G$ be open.  Then the product $U_{\xi_+}U_{\xi_-}$ is a neighbourhood of the identity in $G$.
\end{cor}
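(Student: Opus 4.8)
The plan is to bootstrap from Theorem~\ref{thm:intro_tidy}. First, pick a compact open subgroup $V\le G$ that is minimizing for $g$ (such exist, cf.\ Theorem~\ref{thm:Willis}); then, since $V$ is minimizing, condition $\GTA$ of Theorem~\ref{thm:intro_tidy} gives $V=V_{\xi_+}V_{\xi_-}$. As $V\le G_{\xi_+}G_{\xi_-}$ and $V$ is open, this already settles the case $U=G$ mentioned in the discussion preceding the corollary; for a general open $U\le G$ the task is to manufacture, inside $U$, an identity neighbourhood that factors as a product of a piece of $V_{\xi_+}$ and a piece of $V_{\xi_-}$.

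So fix an open subgroup $U\le G$. Since $V$ is compact open and $G$ is totally disconnected, $V_{\xi_+}$ and $V_{\xi_-}$ are profinite; using continuity of multiplication at $(1,1)$ I would first choose a neighbourhood $N\ni 1$, open in $V$, with $NN\subseteq U\cap V$, and then compact open subgroups $A\le V_{\xi_+}\cap N$ and $B\le V_{\xi_-}\cap N$, so that $AB\subseteq NN\subseteq U\cap V$. Then $A\subseteq AB\subseteq U$ and $A\subseteq V_{\xi_+}\subseteq G_{\xi_+}$, whence $A\le U_{\xi_+}$, and likewise $B\le U_{\xi_-}$; consequently $U_{\xi_+}U_{\xi_-}\supseteq AB$, and it remains only to show that $AB$ is an identity neighbourhood of $G$, equivalently of $V$.

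This last point is the heart of the matter. The relevant observations are that $AB$ is compact (a continuous image of $A\times B$) and hence closed in $V$; that $AB=A(AB)=(AB)B$ because $A$ and $B$ are subgroups; and that $V=V_{\xi_+}V_{\xi_-}$ is covered by the finitely many closed two-sided translates $a_i(AB)b_j$ of $AB$, where $\{a_i\}$ and $\{b_j\}$ run over (finitely many) coset representatives for $A$ in $V_{\xi_+}$ and for $B$ in $V_{\xi_-}$. Since $V$ is compact Hausdorff, hence a Baire space, one of these translates has nonempty interior, so $\mathrm{int}(AB)\neq\emptyset$; choosing $v=\alpha\beta\in\mathrm{int}(AB)$ with $\alpha\in A$, $\beta\in B$, and using that $x\mapsto\alpha^{-1}x\beta^{-1}$ is a homeomorphism of $V$ carrying $AB$ onto $\alpha^{-1}(AB)\beta^{-1}=AB$, we obtain $1=\alpha^{-1}v\beta^{-1}\in\mathrm{int}(AB)$. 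Thus $AB$ is an identity neighbourhood in $V$, hence in $G$, which completes the argument. I expect this Baire-category step — that a product of two small compact subgroups arising from a tidy factorization is still "fat" at the identity — to be the only non-routine part; a structurally cleaner alternative would be to prove outright that the multiplication map $V_{\xi_+}\times V_{\xi_-}\to V$ is open, e.g.\ by observing that its fibres are the (compact) orbits of $V_{\xi_+}\cap V_{\xi_-}$ acting by $c\cdot(a,b)=(ac,c^{-1}b)$, so that saturations of open sets are open.
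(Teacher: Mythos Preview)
Your proof is correct and follows essentially the same approach as the paper: start from a minimizing $V$ to get $V=V_{\xi_+}V_{\xi_-}$, then use finite-index considerations to show that the product for the smaller $U$ is still an identity neighbourhood. The paper's execution is slightly more streamlined --- it reduces to $U$ compact and observes that $V$ is partitioned into finitely many compact (hence clopen) sets $U_{\xi_+}hU_{\xi_-}\cap V$, so the piece containing $1$ is open, which avoids the auxiliary $A,B$ and the Baire step --- but the substance is the same.
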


Conditions $\GTp$ and $\GTm$ will occur automatically if, for example, $g$ has an axis $L$ passing through a point with open stabilizer, and $U$ is the fixator in $G$ of some bounded segment of $L$.  In this context, we obtain a source of minimizing subgroups for $g$.

\begin{cor}[See Section~\ref{sec:tidy}]\label{cor:intro_tidy_axis}
Let $X$ be an NPC space, let $G$ be a \tdlc group acting properly and continuously by isometries and let $g \in G$ be hyperbolic.  Suppose that the image of the isometric embedding $\gamma: \Rb \rightarrow X$ is an axis for $g$, and that $G_{\gamma(0)}$ is open.  Then there exists $t_0 \ge 0$ such that for all $t \ge t_0$, the group $U = G_{\gamma(0),\gamma(t)}$ is minimizing for $g$.
\end{cor}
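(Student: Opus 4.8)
The plan is to reduce everything to Theorem~\ref{thm:intro_tidy}: it suffices to produce $t_0 \ge 0$ such that, for every $t \ge t_0$, the subgroup $U(t) := G_{\gamma(0),\gamma(t)}$ is compact open and satisfies conditions $\GTA$, $\GTp$, $\GTm$. Write $\lambda := |g| > 0$ and use that $\gamma$ is an axis to arrange $g\gamma(s) = \gamma(s+\lambda)$, so $\xi_+ := \xi_+(g) = \lim_{s\to+\infty}\gamma(s)$, $\xi_- = \lim_{s\to-\infty}\gamma(s)$, and $g^k G_{\gamma(s)}g^{-k} = G_{\gamma(s+k\lambda)}$. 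The key structural observation, valid whenever $X$ is uniquely geodesic (in particular in the CAT(0) case~(a)), is that an isometry fixing $\gamma(0)$ and a boundary point $\eta$ fixes the whole geodesic ray from $\gamma(0)$ to $\eta$; hence, for $t\ge 0$,
\[
U(t) = G_{\gamma([0,t])},\qquad U(t)_{\xi_+} := U(t)\cap G_{\xi_+} = G_{\gamma([0,+\infty))},\qquad U(t)_{\xi_-} := U(t)\cap G_{\xi_-} = G_{\gamma((-\infty,t])}.
\]
In the $\delta$-hyperbolic case~(b), $X$ need not be uniquely geodesic, and I would treat as a separate, more technical, step the verification that $U(t)$ is still compact open and that $U(t)_{\xi_\pm}$ still admit the analogous description up to bounded error (via stability of quasigeodesics and properness of the action).

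First I would dispose of compact--openness and of $\GTp$, $\GTm$, which in fact hold for \emph{all} $t\ge 0$. Compactness is immediate since $U(t)\le G_{\gamma(0)}$, which is compact by properness; openness follows because, for any integer $m$ with $m\lambda\ge t$, the finite intersection of open subgroups $\bigcap_{k=0}^{m}g^kG_{\gamma(0)}g^{-k} = \bigcap_{k=0}^{m}G_{\gamma(k\lambda)} = G_{\gamma([0,m\lambda])}$ is contained in $U(t)$. For $\GTp$: $g^{-1}U(t)_{\xi_+}g = g^{-1}G_{\gamma([0,+\infty))}g = G_{\gamma([-\lambda,+\infty))}\le G_{\gamma([0,t])} = U(t)$, since $[0,t]\subseteq[-\lambda,+\infty)$. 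Symmetrically $gU(t)_{\xi_-}g^{-1} = G_{\gamma((-\infty,t+\lambda])}\le G_{\gamma([0,t])} = U(t)$, giving $\GTm$.

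The substantive condition is $\GTA$, i.e. $G_{\gamma([0,t])} = G_{\gamma([0,+\infty))}\cdot G_{\gamma((-\infty,t])}$; this genuinely fails for small $t$ (already for $t=0$ in general), so $t_0$ exists precisely to force it. Here I would invoke Willis's tidying--above procedure (see \cite{WillisFurther}): applied to the compact open subgroup $G_{\gamma(0)}$ and the element $g$, it yields $N\ge 1$ such that $G_{\gamma([0,N\lambda])} = \bigcap_{k=0}^{N}g^kG_{\gamma(0)}g^{-k}$ is tidy above for $g$, i.e. equals the product of its $\pm$-parts; a direct computation identifies those parts as $\bigcap_{j\ge 0}g^{j}G_{\gamma([0,N\lambda])}g^{-j} = G_{\gamma([0,+\infty))}$ and $\bigcap_{j\ge 0}g^{-j}G_{\gamma([0,N\lambda])}g^{j} = G_{\gamma((-\infty,N\lambda])}$, so $\GTA$ holds for $U(N\lambda)$. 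It remains to pass from the single value $t=N\lambda$ to all $t\ge t_0 := N\lambda$, which I would do by a persistence lemma: if $\GTA$ holds for $U(t_1)$ with $t_1\ge 0$, then it holds for $U(t)$ for every $t\ge t_1$. Indeed, given $h\in G_{\gamma([0,t])}\subseteq G_{\gamma([0,t_1])}$, write $h=ab$ with $a\in G_{\gamma([0,+\infty))}$, $b\in G_{\gamma((-\infty,t_1])}$; then $b=a^{-1}h$ fixes $\gamma([0,+\infty))\supseteq\gamma([0,t])$ (through $a^{-1}$) and fixes $\gamma([0,t])$ (through $h$), hence $b$ fixes $\gamma((-\infty,t_1])\cup\gamma([0,t]) = \gamma((-\infty,t])$, so $b\in U(t)_{\xi_-}$ while $a\in U(t)_{\xi_+}$. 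Thus all three conditions hold for $t\ge t_0$, and Theorem~\ref{thm:intro_tidy} concludes that $U(t)$ is minimizing for $g$.

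The step I expect to be the main obstacle is $\GTA$ for large $t$ — equivalently, producing \emph{inside $G$} the factorisation $h=ab$ for an arbitrary $h$ fixing a long initial segment of the axis, i.e. separating the behaviour of $h$ on the $\xi_-$-side from that on the $\xi_+$-side. This is transparent in a tree (let $a$ agree with $h$ on the backward branches and be trivial on the forward half-space), but the naively constructed $a$ need not lie in $G$; making the construction internal to $G$ is exactly the content of Willis's tidying machinery, and the point of requiring $t$ large. The secondary difficulty, already flagged, is the $\delta$-hyperbolic case: without unique geodesics one must recover openness of $U(t)$ and the coarse description of $U(t)_{\xi_\pm}$ before the above argument can be run, using the Morse lemma and properness to absorb the bounded ambiguities.
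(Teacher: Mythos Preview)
Your proposal is correct and very close to the paper's proof; the setup (compact--openness of $U(t)$, the identification $U(t)_{\xi_\pm}=\Fix_G(\gamma([0,+\infty)))$ resp.\ $\Fix_G(\gamma((-\infty,t]))$, and the verification of $\GTp$, $\GTm$) is essentially identical, and the paper likewise glosses over the $\delta$-hyperbolic subtlety you flag. The one genuine difference is how $\GTA$ is obtained for large $t$. The paper does not invoke Willis's tidying-above procedure; instead it uses Corollary~\ref{cor:intro_tidy_neighbourhood} (a consequence of Theorem~\ref{thm:intro_tidy}) to know that $U(0)_{\xi_+}U(0)_{\xi_-}$ is already an identity neighbourhood, and then argues by compactness: since $\bigcap_{t\ge 0}U(t)=U(0)_{\xi_+}$, there is $t_0$ with $U(t)\subseteq U(0)_{\xi_+}U(0)_{\xi_-}$ for all $t\ge t_0$, from which $\GTA$ for $U(t)$ drops out by the same short manipulation you call ``persistence''. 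Your route via tidying-above is equally valid and arguably more direct (it avoids the detour through Corollary~\ref{cor:intro_tidy_neighbourhood}, though of course that corollary also ultimately rests on the existence of tidy subgroups). One small wording glitch: in your persistence step you write that $b=a^{-1}h$ ``fixes $\gamma([0,+\infty))$ through $a^{-1}$''; what you mean (and what makes the argument work) is that $a^{-1}$ and $h$ each fix $\gamma([0,t])$, hence so does $b$, which combined with $b\in G_{\gamma((-\infty,t_1])}$ gives $b\in G_{\gamma((-\infty,t])}$.
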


Example~\ref{ex:small_tidy} below shows why there is no bound on $t_0$ in the statement of Corollary~\ref{cor:intro_tidy_axis}; in particular, $t_0$ can be an arbitrarily large multiple of the translation length of $g$.  If $G$ is acting on a complete CAT(0) space with open point stabilizers, then Corollary~\ref{cor:intro_tidy_axis} applies to all hyperbolic elements, since an axis always exists in this context.  (The specific case where $G$ is the isometry group of a regular locally finite tree was considered already in \cite[\S3]{Willis94}, with similar conclusions.)  In the $\delta$-hyperbolic setting, $g$ does not necessarily have an axis; however, if for example $g$ is a hyperbolic isometry of a locally finite $\delta$-hyperbolic graph, then some positive power of $g$ admits an axis (\cite[Theorem~3.1]{Bywaters}).

\subsection{A geometric interpretation of the contraction group}

At the end of the article we turn to another major component of scale theory, the contraction group.  This can be defined for $g$ acting on $G$ itself, or more generally with respect to the quotient space $G/H$ whenever $H$ is a closed $\grp{g}$-invariant subgroup\footnote{Strictly speaking, the set $\con(g/H)$ is not always a group at this level of generality, however in practice we will only consider it in the case that it is a group, so the term `contraction group' is justifiable.}:
\begin{align*}
\con_G(g) &:= \{h \in G \mid g^nhg^{-n} \rightarrow 1 \text{ as } n \rightarrow +\infty\}; \\
\con_G(g/H) &:= \{h \in G \mid g^nhg^{-n}H \rightarrow H \text{ as } n \rightarrow +\infty\}.
\end{align*}

In general $\con_{G}(g)$ need not be closed in $G$.  Indeed, there are significant restrictions on the structure of closed contraction groups in general due to Gl\"{o}ckner--Willis (\cite{GlocknerWillis}, \cite{GlocknerWillis2}), and Caprace--De Medts (\cite{CDM}) found strong global consequences of closed contraction groups in the case that $G$ is acting boundary-transitively on a locally finite tree.  Hence any more generally-applicable approach must take account of the difference between a contraction group and its closure.

Now suppose $X$ is a metric space on which $G$ acts properly and continuously by isometries, let $g \in G$ and let $Y$ be a nonempty subspace, with $K:= \Fix_G(Y)$.  If $g$ has a bounded orbit, then it is contained in a compact subgroup, so $\con_G(g/K)$ is trivial. On the other hand if $g$ has unbounded orbits, then $h \in \con_G(g/K)$ if and only if, for all $y \in Y$, we have $d(hg^{-n}y,g^{-n}y) \rightarrow 0$ as $n \rightarrow +\infty$ (see Lemma~\ref{lem:basic_contraction}).

There is a more interesting characterization of elements of $\con_G(g/K)$ under the following assumptions, which hold in practice for many cases of interest for the theory of \tdlc groups.

\begin{hyp}\label{hyp:locally_discrete}
Let $X$ be a proper NPC space, and let $G$ be a \tdlc group acting properly on $X$ by isometries.  We suppose also that the action of $G$ on $X$ is \defbold{locally discrete}, meaning that around each point there is a ball of positive radius whose fixator in $G$ is open.
\end{hyp}

For example, Hypothesis~\ref{hyp:locally_discrete} holds in the case that $X$ is a locally finite CAT(0) $M_{\kappa}$-polyhedral complex (in the sense of \cite[Chapter I.7]{BH}, with $\kappa \le 0$) and $G$ is a closed subgroup of the cellular isometry group, where the latter carries the permutation topology acting on the set of cells.  Under Hypothesis~\ref{hyp:locally_discrete}, we can describe the contraction group as follows.

\begin{defn}
Let $X$ be a proper NPC space, let $Y$ be a closed convex subspace, let $\xi \in \partial X$ and let $Z$ be any subset of $X$.  We say that $Z$ is an \defbold{absorbing set for $\xi$ within $Y$} if for some (equivalently, all) rays $\rho: \Rb_{\ge 0} \rightarrow X$ representing $\xi$ and all $t \gg 0$, we have $\{y \in Y \mid d(\rho(t),y) \le r_t\} \subseteq Z$ such that $r_t \rightarrow +\infty$ as $t \rightarrow +\infty$.
\end{defn}

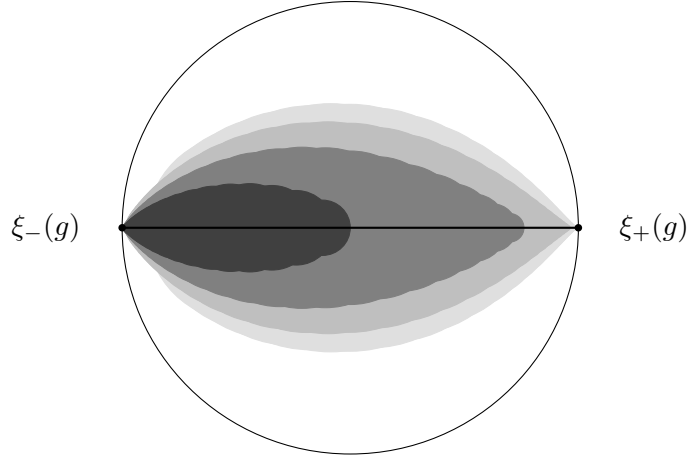
\begin{figure}

\caption{Some translates of an absorbing set in $\mathbb{H}^2$}
\label{fig:absorbing}
\begin{center}
\begin{tikzpicture}[use Hobby shortcut]

\tikzstyle{every node}=[circle,
                        inner sep=0pt, minimum width=2pt]

\foreach \x in {0,...,30} {
\tikzmath{real \r; \r = sqrt(\x);};
\tikzmath{real \y; \y = (3*tanh((\x-24+\r)/8)+3*tanh((\x-24-\r)/8))/2;};
\tikzmath{real \r2; \r2 = (3*tanh((\x-24+\r)/8)-3*tanh((\x-24-\r)/8))/2;};
\draw[lightgray!50, fill=lightgray!50] (-\y,0) circle (\r2);
}

\foreach \x in {0,...,30} {
\tikzmath{real \r; \r = sqrt(\x);};
\tikzmath{real \y; \y = (3*tanh((\x-16+\r)/8)+3*tanh((\x-16-\r)/8))/2;};
\tikzmath{real \r2; \r2 = (3*tanh((\x-16+\r)/8)-3*tanh((\x-16-\r)/8))/2;};
\draw[lightgray, fill=lightgray] (-\y,0) circle (\r2);
}

\foreach \x in {0,...,30} {
\tikzmath{real \r; \r = sqrt(\x);};
\tikzmath{real \y; \y = (3*tanh((\x-8+\r)/8)+3*tanh((\x-8-\r)/8))/2;};
\tikzmath{real \r2; \r2 = (3*tanh((\x-8+\r)/8)-3*tanh((\x-8-\r)/8))/2;};
\draw[gray, fill=gray] (-\y,0) circle (\r2);
}

\foreach \x in {0,...,30} {
\tikzmath{real \r; \r = sqrt(\x);};
\tikzmath{real \y; \y = (3*tanh((\x+\r)/8)+3*tanh((\x-\r)/8))/2;};
\tikzmath{real \r2; \r2 = (3*tanh((\x+\r)/8)-3*tanh((\x-\r)/8))/2;};
\draw[darkgray, fill=darkgray] (-\y,0) circle (\r2);
}

\draw[thick, name path = axis] (-3,0) node[draw=black, fill=black]{} to (3,0) node[draw=black, fill=black]{};
\draw[name path = circle] (0,0) circle (3.0);
\draw (-4,0) node{$\xi_-(g)$};
\draw (4,0) node{$\xi_+(g)$};

\end{tikzpicture}
\end{center}
\end{figure}

\begin{thm}[See Section~\ref{sec:contraction}]\label{thm:intro_contraction}
Assume Hypothesis~\ref{hyp:locally_discrete}.  Let $g \in G$, let $C$ be a compact subset of $G$, let $Y$ be a nonempty closed convex $\grp{g}$-invariant subspace of $X$ and let $K = \Fix_G(Y)$.  If $g$ is bounded then $\con_G(g/K) = K$.  If $g$ is hyperbolic, then the following are equivalent:
\begin{enumerate}[(a)]
\item $C \subseteq \con_G(g/K)$;
\item The set of fixed points of $\grp{C}$ is an absorbing set for $\xi_-(g)$ within $Y$.
\end{enumerate}
\end{thm}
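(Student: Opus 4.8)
\emph{Reductions and the bounded case.} When $g$ is bounded, an orbit and hence $\overline{\grp g}$ is compact, so there are $n_i\to+\infty$ with $g^{n_i}\to 1$; feeding this into the convergent sequence $g^{n}hg^{-n}K$ forces $hK=K$ for every $h\in\con_G(g/K)$, while $K\subseteq\con_G(g/K)$ because $gKg^{-1}=\Fix_G(gY)=\Fix_G(Y)=K$, so $\con_G(g/K)=K$. Now assume $g$ is hyperbolic. Passing to a positive power of $g$ changes neither $\con_G(g/K)$ nor $\xi_-:=\xi_-(g)$ nor the $\grp g$-invariance of $Y$, so I may assume $g$ translates along a geodesic line; let $\rho$ denote its forward ray towards $\xi_-$ (such an axis exists in the CAT(0) case, and after a power in the $\delta$-hyperbolic case). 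I shall use: the reformulation of Lemma~\ref{lem:basic_contraction} that, for $g$ unbounded, $h\in\con_G(g/K)$ iff $d(hg^{-n}y,g^{-n}y)\to0$ for all $y\in Y$; that $\Fix_X(\grp C)=\bigcap_{h\in C}\Fix_X(h)$; that point stabilisers $G_z$ are compact; that properness makes convergence in $G$ yield uniform convergence on compact subsets of $X$; and that for $y\in Y$ the orbit $(g^{-n}y)_{n\ge0}$ lies in $Y$, stays within bounded distance of $\rho$, and projects to points marching to $\xi_-$ along $\rho$ (using stability of quasi-geodesics when $X$ is $\delta$-hyperbolic). Throughout, $B_Y(p,r):=\{y\in Y:d(y,p)\le r\}$.

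\emph{Direction (b)$\Rightarrow$(a).} Suppose $Z:=\Fix_X(\grp C)$ is absorbing for $\xi_-$ within $Y$; applied to $\rho$, this gives radii $r_t\to+\infty$ with $B_Y(\rho(t),r_t)\subseteq Z$ for all $t\gg0$. Fix $h\in C$ and $y\in Y$. By the last item above, for $n\gg0$ the point $g^{-n}y\in Y$ lies within a fixed distance of some $\rho(t_n)$ with $t_n\to+\infty$, so for $n$ large enough $g^{-n}y\in B_Y(\rho(t_n),r_{t_n})\subseteq Z\subseteq\Fix_X(h)$; hence $hg^{-n}y=g^{-n}y$ and $d(hg^{-n}y,g^{-n}y)\to0$. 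By Lemma~\ref{lem:basic_contraction}, $h\in\con_G(g/K)$, so $C\subseteq\con_G(g/K)$.

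\emph{Direction (a)$\Rightarrow$(b).} Assume $C\subseteq\con_G(g/K)$ and fix $R>0$. By Hypothesis~\ref{hyp:locally_discrete} and compactness of the $G_z$, each point of $X$ has a ball whose $G$-fixator is a compact open subgroup. Put $Q_R:=N_R(\rho([0,|g|]))\cap Y$, a compact set; a finite-subcover argument produces $\epsilon^*=\epsilon^*(R)>0$ such that $\Fix_G(B(q,\epsilon^*))$ is open for all $q\in Q_R$ and the subgroup $W_R:=\bigcap_{q\in Q_R}\Fix_G(B(q,\epsilon^*))$ is still open, whence $W_RK$ is an open neighbourhood of $K$. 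For $h\in C$, convergence $g^{m}hg^{-m}K\to K$ gives $N(h,R)$ with $g^{m}hg^{-m}\in W_RK$ for all $m\ge N(h,R)$; conjugating by $g^{-m}$ and using $gKg^{-1}=K$ yields $h\in\Fix_G(B(g^{-m}q,\epsilon^*))\cdot K$ for every $q\in Q_R$, so $h$ fixes each $B_Y(g^{-m}q,\epsilon^*)$ pointwise. Uniting these over $q\in Q_R$ and $m\ge N(h,R)$, and using $g^{-m}Q_R=N_R(\rho([m|g|,(m+1)|g|]))\cap Y$, we see that $h$ fixes pointwise the $R$-neighbourhood in $Y$ of the part of $\rho$ beyond $\rho(N(h,R)|g|)$; in particular $B_Y(\rho(t),R)\subseteq\Fix_X(h)$ for all $t\ge T(h,R):=N(h,R)|g|$. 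Granting the uniform bound $T(R):=\sup_{h\in C}T(h,R)<+\infty$, we obtain $B_Y(\rho(t),R)\subseteq\bigcap_{h\in C}\Fix_X(h)=\Fix_X(\grp C)$ for all $t\ge T(R)$, and then $r_t:=\sup\{R:B_Y(\rho(t),R)\subseteq\Fix_X(\grp C)\}\to+\infty$, i.e. $\Fix_X(\grp C)$ is absorbing for $\xi_-$ within $Y$.

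\emph{Main obstacle.} The decisive step is the uniform bound $T(R)<+\infty$ over $h\in C$. The neighbourhoods $g^{-m}(W_RK)g^{m}$ of $K$ are not nested in $m$, so $\{h:g^{m}hg^{-m}\in W_RK\ \forall m\ge N\}$ need not be open and one cannot simply cover $C$; instead one exploits the geometry, namely the convexity of the fixed-point sets $\Fix_X(h)$ (quasi-convexity in the $\delta$-hyperbolic case) together with the fact that membership in $\con_G(g/K)$ forces fixing a \emph{tail} of $\rho$, to control $T(h,R)$ near a limit point of $C$ via the uniform-on-compacta convergence recorded above. A secondary burden is transporting the CAT(0) convexity arguments (including the projection statement in the first paragraph) to the $\delta$-hyperbolic setting, and the reduction to an element possessing a genuine axis.
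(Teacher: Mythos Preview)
Your proposal correctly isolates the crux of (a)$\Rightarrow$(b) as the uniform bound $T(R) = \sup_{h\in C} T(h,R) < +\infty$, but the geometric strategy you sketch for closing it --- via (quasi-)convexity of the individual fixed-point sets $\Fix_X(h)$ and uniform-on-compacta convergence near a limit point of $C$ --- is both unnecessary and hard to make rigorous, especially in the $\delta$-hyperbolic case where fixed sets are only quasi-convex and you would be intersecting uncountably many of them.  The paper resolves the uniformity purely at the level of the topological group, via Lemma~\ref{lem:uniform_contract}: if $C$ is compact with $C \subseteq \con_G(g/K)$, then for every identity neighbourhood $O$ with $O = OK$ there is a single $n_0$ such that $g^n C g^{-n} \subseteq O$ for all $n \ge n_0$.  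The proof is a short Baire category argument: choose a compact identity neighbourhood $O_2$ with $O_2 O_2 \subseteq O$ and set $O_- = \bigcap_{n\ge 0} g^{-n} O_2 g^n$; then $C$ is covered by the increasing compact sets $g^{-n} O_- g^n$, so by Baire one of them has nonempty interior in $C$, and a finite cover of $C$ by translates of that interior piece then gives the uniform $n_0$.  Applying this with $O = W_RK$ in your notation immediately yields $T(R) < +\infty$, with no further geometry.

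A secondary issue: your reduction ``pass to a power of $g$ so that it translates along a geodesic line'' is not valid in general proper $\delta$-hyperbolic spaces, since powers of a QI-hyperbolic isometry need not be axial (the paper only records this fact for locally finite graphs).  The paper avoids the reduction entirely: it fixes any base point $y_0 \in Y$, sets $y_n = g^{-n}y_0$, takes a geodesic ray $\rho$ from $y_0$ to $\xi_-$, and in the $\delta$-hyperbolic case uses stability of quasi-geodesics (Lemma~\ref{lem:close_geodesic}) to find $t_n \to +\infty$ and a constant $\kappa$ with $d(y_n,\rho(t_n)) + |t_{n+1}-t_n| \le \kappa$.  With this $\kappa$-slack the argument in both directions runs essentially as you wrote it, once the Baire lemma supplies the missing uniformity.
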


In particular, in this context the contraction group of a hyperbolic element $g$ only depends on its repelling point.

The prototypical example of an absorbing set is a horoball centred at $\xi$, but an absorbing set need not contain any horoball, since $r_t$ is allowed to tend to $+\infty$ arbitrarily slowly.  Figure~\ref{fig:absorbing} shows (in different shades) some translates of an absorbing set for a hyperbolic translation $g$, where $r_t = O(\sqrt{t})$.  This complication is unavoidable in general: if one considers the case that $X$ is a regular locally finite tree, $G = \mathrm{Isom}(X)$ and $g \in G$ translating along a geodesic line $\gamma$, then for any function $t \mapsto r_t$ that tends to $+\infty$, it is easy to construct $h \in \con_G(g)$ such that the largest ball around $\gamma(-t)$ that is fixed pointwise by $h$ only grows at a rate of $O(r_t)$ as $t \rightarrow +\infty$.  So in the situation of Theorem~\ref{thm:intro_contraction}, it can happen that there is no uniform choice of absorbing set $Z$ such that every element of $\con_G(g)$ fixes pointwise some $\mathrm{Isom}(X)$-translate of $Z$.  This is a manifestation of the failure of $\con_G(g)$ to be closed in general.

Some of the complications described in the previous paragraph disappear if we pass to a sufficiently small $\grp{g}$-invariant subspace.  Although contraction groups are not closed in general, it was shown by Baumgartner--Willis (extended to the non-metrizable case by Jaworski) that there is always a compact $\grp{g}$-invariant subgroup $K$ of $G$ such that $\con_G(g/K)$ is closed.  The smallest such $K$ is the \defbold{nub} $\nub_G(g)$ of $g$, which has several equivalent characterizations: for example, it is the intersection of all minimizing subgroups for $g$.  We can extract from Theorem~\ref{thm:intro_contraction} some equivalent conditions for when $Y$ is a \defbold{closed contraction space} for $g$, meaning that $\con_{G}(g/\Fix_G(Y))$ is closed.

\begin{cor}[See Section~\ref{sec:contraction}]\label{cor:intro_contraction_closed}
Assume Hypothesis~\ref{hyp:locally_discrete}.  Let $g \in G$ be hyperbolic, let $Y$ be a nonempty closed convex $\grp{g}$-invariant subspace of $X$ and let $K = \Fix_G(Y)$.  Then the following are equivalent:
\begin{enumerate}[(a)]
\item $\con_G(g/K)$ is closed;
\item $\nub_G(g)$ fixes $Y$ pointwise;
\item For some (equivalently all) $x \in X$, the set of fixed points of $\con_G(g/K)_x$ is absorbing for $\xi_-(g)$ within $Y$.
\end{enumerate}
\end{cor}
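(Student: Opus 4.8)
The plan is to prove $\mathrm{(b)}\Rightarrow\mathrm{(a)}$, then $\mathrm{(a)}\Rightarrow\mathrm{(c)}$ for every $x$, then $\mathrm{(c)}$ for some $x$ $\Rightarrow\mathrm{(a)}$, and finally $\mathrm{(a)}\Rightarrow\mathrm{(b)}$; running the loop this way also disposes of the ``some/all'' clause inside (c). Two preliminary remarks. First, for any $y \in Y$ we have $K = \Fix_G(Y) \le G_y$, so $K$ is compact by properness, and $gKg\inv = \Fix_G(gY) = \Fix_G(Y) = K$, so $K$ is normalized by $g$; hence $\con_G(g/K)$ is a subgroup of $G$ (a direct check using compactness of $K$, or see Section~\ref{sec:contraction}). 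Second, every point stabilizer $G_p$ is compact (properness) and closed (continuity of the action), so the fixed-point set in $X$ of a subset of $G$ equals that of its closure.

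\emph{$\mathrm{(a)}\Leftrightarrow\mathrm{(b)}$.} I would invoke the basic theory of the nub in the form: for a compact $\grp{g}$-invariant subgroup $N$ of $G$, the set $\con_G(g/N)$ is closed if and only if $\nub_G(g) \le N$ (the nub is the least compact $\grp{g}$-invariant subgroup with closed contraction quotient, and this property is monotone in $N$). Applied with $N = K$, this says (a) holds if and only if $\nub_G(g) \le \Fix_G(Y)$, i.e. if and only if $\nub_G(g)$ fixes $Y$ pointwise, which is (b).

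\emph{$\mathrm{(a)}\Rightarrow\mathrm{(c)}$ (all $x$), and $\mathrm{(c)}$ (some $x$) $\Rightarrow\mathrm{(a)}$.} Fix $x \in X$ and put $C = \con_G(g/K)_x = \con_G(g/K) \cap G_x$, a subgroup of the compact group $G_x$ (so $\grp{C} = C$). If $\con_G(g/K)$ is closed, then $C$ is closed, hence compact, and $C \subseteq \con_G(g/K)$; since $g$ is hyperbolic, Theorem~\ref{thm:intro_contraction} applied to $C$ shows $\Fix_X(C)$ is absorbing for $\xi_-(g)$ within $Y$, and as $x$ was arbitrary this gives (c) at every $x$. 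Conversely, assume (c) at some $x$ and let $\widehat C = \overline{C}$, a compact subgroup (it lies in $G_x$) with $\Fix_X(\widehat C) = \Fix_X(C)$. By hypothesis $\Fix_X(\widehat C)$ is absorbing for $\xi_-(g)$ within $Y$, so Theorem~\ref{thm:intro_contraction} gives $\widehat C \subseteq \con_G(g/K)$; intersecting with $G_x$ yields $\overline{C} \subseteq C$, so $\con_G(g/K)_x$ is closed. Now apply Hypothesis~\ref{hyp:locally_discrete} to $x$: there is $\varepsilon > 0$ with $V := \Fix_G(B(x,\varepsilon))$ open, and $V$ is then a compact open subgroup of $G$ with $V \le G_x$. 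Hence $\con_G(g/K) \cap V = \con_G(g/K)_x \cap V$ is closed, and since a subgroup of $G$ that is closed in a neighbourhood of the identity is closed, $\con_G(g/K)$ is closed, giving (a).

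The implication I expect to require the most care is $\mathrm{(c)}$ for some $x$ $\Rightarrow\mathrm{(a)}$, specifically the upgrade from ``$\con_G(g/K)_x$ is closed'' to ``$\con_G(g/K)$ is closed''. This is exactly where Hypothesis~\ref{hyp:locally_discrete} enters in an essential way: it supplies a compact \emph{open} subgroup of $G$ fixing a small ball about the chosen $x$, which converts closedness of the subgroup $\con_G(g/K)$ near the identity into closedness everywhere. Beyond that, I would want to pin down carefully (rather than re-derive) the nub facts used in $\mathrm{(a)}\Leftrightarrow\mathrm{(b)}$ — in particular that closedness of $\con_G(g/N)$ depends only on whether $N \ge \nub_G(g)$ — and to confirm that $\con_G(g/K)$ is genuinely a subgroup, so that the ``locally closed implies closed'' principle applies.
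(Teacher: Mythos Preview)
Your proof is correct and follows essentially the same route as the paper: both directions of (a)$\Leftrightarrow$(b) go through the nub characterization (Corollary~\ref{cor:contraction_closure}), and both directions of (a)$\Leftrightarrow$(c) are obtained by applying Theorem~\ref{thm:intro_contraction} to the compact set $\con_G(g/K)\cap G_x$ and its closure. One small simplification: under Hypothesis~\ref{hyp:locally_discrete} the stabilizer $G_x$ is already open (it contains the open fixator of a small ball about $x$), so your passage to the smaller $V=\Fix_G(B(x,\varepsilon))$ in the (c)$\Rightarrow$(a) step is unnecessary---once $C=\con_G(g/K)\cap G_x$ is shown closed, closedness of $\con_G(g/K)$ follows immediately because $G_x$ is an open identity neighbourhood.
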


If $Y$ is a closed contraction space and $V$ is a minimizing subgroup for $G$, we can say a bit more about $X^W$ where $W = V \cap \con_G(g/\Fix_G(Y))$.

\begin{cor}[See Section~\ref{sec:contraction}]\label{cor:intro_contraction_tidy}
Assume Hypothesis~\ref{hyp:locally_discrete}.  Let $g \in G$ be hyperbolic, let $Y$ be a nonempty closed convex $\grp{g}$-invariant subspace of $X$ and let $K = \Fix_G(Y)$.  Let $V$ be minimizing for $g$, let $W = V \cap \con_G(g/K)$ and let $Z = X^W \cap Y$.  Suppose $\con_G(g/K)$ is closed.  If $s_G(g^{-1})=1$ then $\con_G(g/K)=K$, so $Z = Y$.  Otherwise, $Z$ is a closed convex subspace such that
\[
Z \subseteq gZ; \quad \bigcup_{n \ge 0} g^nZ = Y; \quad \bigcap_{n \le 0} g^nZ = \emptyset.
\]
\end{cor}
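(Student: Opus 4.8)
The plan is to treat the two cases $s_G(g^{-1})=1$ and $s_G(g^{-1})>1$ separately. In both I would first record three standing observations: $\con_G(g)\subseteq\para_G(g)=G_{\xi_-}$ (the inclusion because a sequence converging to $1$ is relatively compact, the equality by Theorem~\ref{thm:para_stable} applied to $g^{-1}$); $\con_G(g/K)$ is $\grp{g}$-invariant; and $K=\Fix_G(Y)$ fixes both $\xi_+$ and $\xi_-$, since the $\grp{g}$-invariant closed convex set $Y$ contains a ray to each of them. Since $\con_G(g/K)$ is assumed closed, Corollary~\ref{cor:intro_contraction_closed} gives $\nub_G(g)\le K$, and the Baumgartner--Willis description of $\overline{\con_G(g)}$ as $\con_G(g)\nub_G(g)$ then lets me write $\con_G(g/K)=K\cdot\con_G(g)$.

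Case $s_G(g^{-1})=1$. By Theorem~\ref{thm:uniscalar_hyperbolic} applied to $g^{-1}$ this is equivalent to $G_{\xi_-}\subseteq G_{\xi_+}$, so $\con_G(g)\subseteq G_{\xi_-}\cap G_{\xi_+}$, and hence, using the previous paragraph together with $K\subseteq G_{\xi_-}\cap G_{\xi_+}$, also $\con_G(g/K)\subseteq G_{\xi_-}\cap G_{\xi_+}$. Fix a $g$-axis $\gamma\subseteq Y$ running from $\xi_-$ to $\xi_+$ (in the $\delta$-hyperbolic case, a quasi-axis). For $h\in\con_G(g/K)$, Theorem~\ref{thm:intro_contraction} says $X^{\grp{h}}\cap Y$ is absorbing for $\xi_-$ within $Y$; since $h$ fixes $\xi_+$ and $\xi_-$ and fixes $\gamma(t)$ for $t\ll 0$, uniqueness of geodesic rays to a boundary point forces $h$ to fix $\gamma$ pointwise. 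For arbitrary $y\in Y$, the orbit $g^{-n}y$ tends to $\xi_-$ while staying at bounded distance from $\gamma$, so it eventually enters the absorbing set; thus $h$ fixes $g^{-n}y$ for $n\gg 0$, and taking the convex hull of $\gamma$ together with these points — whose closure contains $y$ — gives $hy=y$. Hence $\con_G(g/K)\subseteq K$, so $\con_G(g/K)=K$ and $Z=Y$.

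Case $s_G(g^{-1})>1$. The set $X^W$ is closed and convex (fixed set of a group of isometries of the proper CAT(0) space $X$; in the $\delta$-hyperbolic setting I would pass to a quasi-convex hull and argue up to bounded error), so $Z=X^W\cap Y$ is closed convex, and it is nonempty because by Theorem~\ref{thm:intro_contraction} it is absorbing for $\xi_-$ within $Y$. For $Z\subseteq gZ$: since $Y$ is a closed contraction space and $\con_G(g/K)\subseteq\para_G(g)$, the tidy subgroup theory identifies $W=V\cap\con_G(g/K)$ as the piece of $V$ lying inside $V_{g-}$, on which conjugation by $g$ is contracting, so $gWg^{-1}\subseteq W$; then $X^W\subseteq X^{gWg^{-1}}=gX^W$, and intersecting with the $g$-invariant set $Y$ gives $Z\subseteq gZ$, whence $(g^nZ)_{n\ge 0}$ is increasing. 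For $\bigcup_{n\ge 0}g^nZ=Y$: given $y\in Y$, the points $g^{-n}y$ tend to $\xi_-$ at bounded distance from a fixed ray to $\xi_-$, so by the absorbing property applied to the compact group $W$ they lie in $Z$ for $n\gg 0$, i.e.\ $y\in g^nZ$. For $\bigcap_{n\le 0}g^nZ=\emptyset$: if some $z$ lay in every $g^nZ$ with $n\le 0$, then $W$ would fix $g^mz$ for all $m\ge 0$, so the increasing union $W_{--}=\bigcup_{m\ge 0}g^{-m}Wg^m$ would fix $z$; as $W_{--}\subseteq\con_G(g/K)\subseteq G_{\xi_-}$, its closure would fix both $z$ and $\xi_-$, hence fix the ray $[z,\xi_-)$ pointwise, so $\overline{W_{--}}\subseteq\Fix_G(z)$, which is compact by properness — contradicting that $W_{--}$ is a strictly increasing union of compact groups, the strictness coming from $[W:gWg^{-1}]=s_G(g^{-1})>1$.

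I expect the main obstacle to be the geometric step in the case $s_G(g^{-1})=1$: upgrading ``$h$ fixes both $\xi_\pm$ and an absorbing subset of $Y$ near $\xi_-$'' to ``$h$ fixes $Y$ pointwise''. The convex-hull argument is transparent in a flat picture but needs care with CAT(0) comparison, and the $\delta$-hyperbolic case has no genuine axis, so one must work with quasi-geodesics and bounded error and reconcile this with the definition of absorbing set. A secondary point to nail down is the identification of $W=V\cap\con_G(g/K)$ with the expected tidy piece of $V$ when $Y$ is a closed contraction space, so that $gWg^{-1}\subseteq W$ with index exactly $s_G(g^{-1})$; this should follow from the tidy subgroup theory together with $\nub_G(g)\le K$.
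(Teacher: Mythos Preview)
Your self-diagnosed obstacle in the case $s_G(g^{-1})=1$ is a genuine gap, and the geometric route you sketch does not obviously close: it is not clear that $y$ lies in the closed convex hull of $\gamma$ together with $\{g^{-n}y : n \gg 0\}$, nor that a $g$-axis can be chosen inside $Y$, and in the $\delta$-hyperbolic case there need be no axis at all. The paper avoids all of this by handling the case purely group-theoretically. By Lemma~\ref{lem:BW_scale_one}, $s_G(g^{-1})=1$ is equivalent to $\con_G(g)$ having compact closure; since $K$ is compact and $C:=\con_G(g/K)=\con_G(g)K$ is assumed closed, $C$ is then compact. The paper also establishes (for both cases at once) the chain
\[
s_G(g^{-1}) = |g^{-1}Vg:g^{-1}Vg \cap V| \ge |g^{-1}Wg:g^{-1}Wg \cap W| \ge s_C(g^{-1}) = s_G(g^{-1}),
\]
the last equality by Lemma~\ref{lem:scale_subgroup} using $\con_G(g) \le C$. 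Hence $W$ is minimizing for $g$ in $C$; combined with $s_C(g)=1$ (from $C \le \para_G(g)$) this gives $gWg^{-1}=W$ when $C$ is compact, and then the contraction of every element of $C/K$ under $g$ forces $C=K$. No geometry is needed.

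For $s_G(g^{-1})>1$ your argument is essentially the paper's, but streamlined by the same inequality chain: it gives directly that $W$ is minimizing in $C$ with $[W:gWg^{-1}]=s_G(g^{-1})$, replacing your appeal to ``tidy subgroup theory identifies $W$ with the piece of $V$ inside $V_{g-}$''. For the empty intersection the paper simply notes that a point $z \in \bigcap_{n \le 0}g^nZ$ is fixed by $\bigcup_{m\ge 0}g^{-m}Wg^m$ and by $K$ (since $z \in Y$), hence by $C$; properness and noncompactness of $C$ then give $X^C=\emptyset$. Your detour through fixing the ray $[z,\xi_-)$ is unnecessary, since you already have $z$ fixed, and that alone forces compactness of the fixator.
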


\begin{rem}
Assume Hypothesis~\ref{hyp:locally_discrete}, that $G$ acts faithfully and that every compact subgroup of $G$ has a fixed point in $X$; let $g \in G$ be axial.  One can obtain a space $Z$ as in Corollary~\ref{cor:intro_contraction_tidy} as follows.  Let $Y = X^{\nub_G(g)}$ and $K = \Fix_G(Y)$.  Take a point $x$ on an axis of $g$ and set $W_0 = \con_G(g)_x$.  We deduce from Corollary~\ref{cor:intro_tidy_axis} that $W_0 = V \cap \con_G(g)$ where $V$ is a minimizing subgroup for $g$; one then has $\ol{W_0} \ge \nub_G(g)$, so
\[
X^{W_0} = X^{\ol{W_0}} \subseteq Y.
\]
Given Corollary~\ref{cor:intro_contraction_closed}, one sees that $W = V \cap \con_G(g/K)$ is the closure in $G$ of $W_0K$, so in fact $X^{W_0} = X^W$ and hence $Z = X^{W_0}$ is as in Corollary~\ref{cor:intro_contraction_tidy}. The set $X^{W_0}$ can be thought of as the intersection of all absorbing sets $Z'$ for $\xi_-(g)$ within $X$, such that $x \in Z'$ and $Z'$ is realized as the fixed points of some element of $G$.  This intersection is itself an absorbing set for $\xi_-(g)$ within $X$ if and only if $\con_G(g)$ is closed; one recovers $X^{\nub_G(g)}$ as the union of all $\grp{g}$-translates (or indeed the union of translates by positive powers of $g$) of $X^{W_0}$.

For example, if $X$ is a regular locally finite tree and $G = \Isom(X)$, then every absorbing set that is the convex hull of a set of vertices occurs as $X^h$ for some $h \in \con_G(g)$, and consequently the space $Z$ we obtain is just a ray representing $\xi_-(g)$, with $\bigcup_{n \ge 0}g^nZ$ being the axis of $g$.  At the other extreme, if $G = \mathrm{PGL}_2(\Qb_p)$ acting on its Bruhat--Tits tree $X$, then $\con_G(g)$ is closed, $Z$ is a horoball, and $\bigcup_{n \ge 0}g^nZ$ is the whole tree: see Example~\ref{ex:SL2}.
\end{rem}

\paragraph{Acknowledgements.} I thank Michal Ferov and George Willis for productive discussions on the topic of this article.

\section{Preliminaries}

\subsection{Scale theory}

Let $G$ be a \tdlc group and let $\mc{COS}(G)$ be the set of compact open subgroups of $G$.

Let $g \in G$.  The \defbold{scale} of $g$ is
\[
s_G(g) := \min \{ |gUg\inv: gUg\inv \cap U| \mid U \in \mc{COS}(G)\}.
\]
A subgroup $U \in \mc{COS}(G)$ that achieves the minimum is called \defbold{mimimizing} for $g$.

\begin{thm}[{\cite[Theorem~3.1]{WillisFurther}}]\label{thm:Willis}
Let $G$ be a \tdlc group and let $g \in G$.  Then $U$ is minimizing for $g$ if and only if $U$ is \defbold{tidy} for $g$, which means it has the following two properties:
\item[$\TA$] $U = U_{g+}U_{g-}$, where $U_{g+} = \bigcap_{n \ge 0}g^nUg^{-n}$ and $U_{g-} = \bigcap_{n \le 0}g^nUg^{-n}$;
\item[$\TB$] $U_{g++}:=\bigcup_{n \ge 0}g^nU_{g+}g^{-n}$ is closed in $G$.
\end{thm}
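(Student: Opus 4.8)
This is Willis's tidy subgroup theorem, and I would follow (a streamlined version of) his argument. For $U \in \mc{COS}(G)$ and $n \ge 1$ write $b_n(U) := |g^{n}Ug^{-n} : g^{n}Ug^{-n} \cap U|$, so that the scale is $s_G(g) = \inf_U b_1(U)$, a minimum of positive integers. The plan has three parts: (1) an elementary argument that $\rho(g) := \lim_n b_n(U)^{1/n}$ exists, is independent of $U$, and satisfies $\rho(g) \le b_1(U)$ for all $U$; (2) a construction that, from any $U$, produces a tidy subgroup (one satisfying both $\TA$ and $\TB$) with no larger $b_1$, together with the fact that a tidy subgroup has $b_1 = \rho(g)$, whence $s_G(g) = \rho(g)$ and every tidy subgroup is minimising; (3) the converse, that every minimising subgroup is tidy.

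Part (1) is routine: the indices are submultiplicative, $b_{m+n}(U) \le b_m(U)\,b_n(U)$ (refine the chain through $g^{m+n}Ug^{-(m+n)} \cap g^{n}Ug^{-n}$, then translate), so $\lim_n b_n(U)^{1/n} = \inf_n b_n(U)^{1/n}$ exists by Fekete's lemma; commensurability of any two members of $\mc{COS}(G)$ makes the limit independent of $U$; and the infimum is $\le b_1(U)$.

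Part (2) splits into ``tidying above'' and ``tidying below''. For the first, put $U^{(n)} := \bigcap_{k=0}^{n} g^{k}Ug^{-k}$; one checks, via the behaviour of indices under intersection, that $U^{(n)} \in \mc{COS}(G)$, that $b_1(U^{(n)}) = |gU^{(n)}g\inv : U^{(n+1)}|$, and that $b_1(U^{(n)}) \le b_1(U^{(n-1)})$, so this integer sequence is eventually constant; the key lemma is that once it is constant the subgroup $V := U^{(n)}$ satisfies $\TA$ --- equality of consecutive terms forces a product identity among the ``shifted cores'' $\bigcap_{k=m}^{N} g^{k}Ug^{-k}$, which one iterates to write an arbitrary $v \in V$ as $v_+v_-$ with $v_\pm \in V_{g\pm}$, by induction on the length of the initial segment of $(g^{k}vg^{-k})_{k\ge0}$ contained in $V$. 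For the second, let $V$ satisfy $\TA$; then $V_{g++} = \bigcup_{n\ge0} g^{n}V_{g+}g^{-n}$ is an increasing union of compact groups that need not be closed, and this is the sole obstruction to $\TB$. I would attach to $V$ a compact $\grp{g}$-invariant subgroup $\mc{L}_V$, extracted from $\ol{V_{g++}}$ and $\ol{V_{g--}}$, such that (a) $\mc{L}_V$ is compact --- proving this genuinely uses $\TA$, since $\ol{V_{g++}}$ itself is typically non-compact --- and (b) $V_{g++}$ is closed iff $\mc{L}_V \le V$; then absorb $\mc{L}_V$ into $V$ to get $V' \in \mc{COS}(G)$ with $V,\mc{L}_V \le V'$ (checking that the relevant product is a subgroup), still satisfying $\TA$ and with $b_1(V') = b_1(V)$, the last point because enlarging by the $g$-invariant group $\mc{L}_V$ creates no new coset overspill $gV'g\inv \setminus V'$. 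By (b), $V'$ satisfies $\TB$ and so is tidy. Finally, for a tidy subgroup the $\TA$-decomposition iterates to the \emph{exact} identity $b_n(V') = b_1(V')^{n}$ for all $n$, so $b_1(V') = \rho(g)$; since also $b_1(V') \le b_1(U)$, we get $s_G(g) = \rho(g)$, and every tidy subgroup is minimising.

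Part (3): if $U$ is minimising then $b_1(U^{(n)})$ is constant, so $U$ already satisfies $\TA$ by the lemma above; and if $U$ failed $\TB$, then $\mc{L}_U \not\le U$, from which the task is to derive a contradiction by exhibiting, out of an element of $\ol{U_{g++}}\setminus U_{g++}$, a compact open subgroup with $b_1$ strictly below $\rho(g) = s_G(g)$. This last step is the main obstacle: it is where one must really control the generally non-closed group $U_{g++}$ and its closure, both to build $\mc{L}_V$ and to prove $\mc{L}_V$ compact, and to see that a minimising subgroup cannot tolerate its being proper. An alternative organisation replaces $\mc{L}_V$ by the action of $g$ on a locally finite ``rough Cayley'' graph attached to $(G,U)$ (M\"oller): minimising subgroups become vertex stabilisers realising the exponential growth rate of the forward $g$-orbit, $\TA$ and $\TB$ become finiteness properties of that graph, and $s_G(g)$ is identified (Kr\"on--M\"oller) with a spectral radius --- but the closedness of $V_{g++}$ stays the delicate point either way.
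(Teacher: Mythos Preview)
The paper does not prove this statement: Theorem~\ref{thm:Willis} is quoted verbatim from \cite[Theorem~3.1]{WillisFurther} and used as a black box throughout, so there is no ``paper's own proof'' to compare your proposal against. Your sketch is a reasonable outline of Willis's original argument (tidying above via iterated intersections, tidying below via the $\mc{L}_V$ subgroup, identification of the scale with the spectral radius $\rho(g)$), and you correctly identify the delicate point as controlling $\ol{U_{g++}}$; but none of this appears in the present paper, which simply invokes the result.
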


If we replace $g$ with $g^{-1}$, condition $\TA$ is unaffected; by \cite[Lemma 3(b)]{Willis94}, condition $\TB$ is also preserved.  We will use the following corollary without further comment.

\begin{cor}
Let $G$ be a \tdlc group and let $g \in G$.  Then $U$ is minimizing for $g$ if and only if $U$ is minimizing for $g^{-1}$.
\end{cor}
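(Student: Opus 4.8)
The plan is to reduce everything to Theorem~\ref{thm:Willis} (Willis's tidiness criterion), together with the observation immediately preceding the statement, namely that the conditions $\TA$ and $\TB$ are each insensitive to replacing $g$ by $g\inv$. Concretely: by Theorem~\ref{thm:Willis}, $U$ is minimizing for $g$ if and only if $U$ is tidy for $g$, i.e.\ satisfies $\TA$ and $\TB$ (with respect to $g$); and likewise $U$ is minimizing for $g\inv$ if and only if $U$ satisfies $\TA$ and $\TB$ with respect to $g\inv$. So it suffices to check that ``$U$ tidy for $g$'' and ``$U$ tidy for $g\inv$'' are the same condition.

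First I would note that $\TA$ is manifestly symmetric: the subgroups $U_{g+}=\bigcap_{n\ge 0}g^nUg^{-n}$ and $U_{g-}=\bigcap_{n\le 0}g^nUg^{-n}$ simply swap roles when $g$ is replaced by $g\inv$, since conjugation by $g\inv$ reindexes the intersection $n\mapsto -n$. Thus $U_{g\inv+}=U_{g-}$ and $U_{g\inv-}=U_{g+}$, and the factorization $U=U_{g+}U_{g-}$ is literally the same statement as $U=U_{g\inv+}U_{g\inv-}$ (up to writing the two factors in the opposite order, which is harmless since their product is a group equal to $U$ in either order). Hence $U$ satisfies $\TA$ for $g$ if and only if it satisfies $\TA$ for $g\inv$.

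For $\TB$, the content is that $U_{g++}=\bigcup_{n\ge 0}g^nU_{g+}g^{-n}$ is closed, and one wants the analogous statement $U_{g\inv++}=\bigcup_{n\ge 0}g^nU_{g-}g^{-n}$ closed. This is not a formal symmetry, so here I would invoke the cited result \cite[Lemma 3(b)]{Willis94}, which is precisely the assertion that, for a subgroup satisfying $\TA$, closedness of $U_{g++}$ is equivalent to closedness of $U_{g--}:=\bigcup_{n\le 0}g^nU_{g-}g^{-n}$; the latter is exactly $U_{g\inv++}$. Combining the two paragraphs, $U$ is tidy for $g$ iff $U$ is tidy for $g\inv$, and the corollary follows by applying Theorem~\ref{thm:Willis} in both directions.

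The only genuine input beyond bookkeeping is the equivalence of the two closedness conditions under $\TA$, i.e.\ \cite[Lemma 3(b)]{Willis94}; since the paper explicitly permits citing this, there is no real obstacle here, and the proof is essentially a two-line deduction. (If one wanted a self-contained argument, the mild subtlety would be verifying that $U_{g++}$ closed $\Rightarrow$ $U_{g--}$ closed without circularity, but that is exactly what the cited lemma handles, so I would not reprove it.)
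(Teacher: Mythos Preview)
Your argument is correct and follows exactly the paper's approach: the paper's entire proof is the sentence preceding the corollary, namely that $\TA$ is manifestly symmetric under $g\mapsto g^{-1}$ and that the symmetry of $\TB$ is supplied by \cite[Lemma~3(b)]{Willis94}, after which Theorem~\ref{thm:Willis} gives the equivalence. One small slip: in your display for $U_{g^{-1}++}$ the exponents should read $\bigcup_{n\ge 0}g^{-n}U_{g-}g^{n}$ (not $g^nU_{g-}g^{-n}$), though you correctly identify this group with $U_{g--}$ in the next line.
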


The modular function can be recovered from the scale, as follows: given a right-invariant Haar measure $\mu$ and $U \in \mc{COS}(G)$, then
\[
\Delta_G(g) = \frac{\mu(gU)}{\mu(U)} = \frac{|gUg\inv: gUg\inv \cap U|}{|U: gUg\inv \cap U|} = \frac{s_G(g)}{s_G(g\inv)}.
\]
In particular, $G$ is unimodular if and only if, for all $g \in G$, we have $s_G(g) = s_G(g\inv)$.

Within a tidy subgroup $U$, the subgroups $U_{g+}$ and $U_{g-}$ are characterized by which forward and backward orbits of $g$ are bounded.

\begin{lem}\label{lem:parabolic_tidy}
Let $G$ be a \tdlc group, let $g \in G$ and let $U$ be tidy for $g$.  Then
\[
U \cap \para_G(g\inv) = U_{g+} \text{ and } U \cap \para_G(g) = U_{g-}.
\]
\end{lem}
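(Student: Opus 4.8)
The plan is to prove the two claimed identities $U \cap \para_G(g^{-1}) = U_{g+}$ and $U \cap \para_G(g) = U_{g-}$; by the symmetry between $g$ and $g^{-1}$ (which preserves tidiness, as noted after Theorem~\ref{thm:Willis}) it suffices to prove the first one, say $U \cap \para_G(g^{-1}) = U_{g+}$. Recall $U_{g+} = \bigcap_{n \ge 0} g^n U g^{-n}$ and that $x \in \para_G(g^{-1})$ means $\{g^{-n} x g^n \mid n \ge 0\}$ has compact closure.

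For the inclusion $U_{g+} \subseteq U \cap \para_G(g^{-1})$: if $x \in U_{g+}$, then for every $n \ge 0$ we have $x \in g^n U g^{-n}$, equivalently $g^{-n} x g^n \in U$. Hence the whole backward orbit $\{g^{-n} x g^n \mid n \ge 0\}$ lies in the compact set $U$, so its closure is compact and $x \in \para_G(g^{-1})$. Since also $x \in U_{g+} \subseteq U$, this gives one inclusion with essentially no work.

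For the reverse inclusion $U \cap \para_G(g^{-1}) \subseteq U_{g+}$: take $x \in U$ with $\{g^{-n} x g^n \mid n \ge 0\}$ having compact closure $C$. The natural idea is that the set $D := \grp{C \cup U}$ — or more carefully, the union $\bigcup_{n \ge 0} g^{-n} U g^n \cap (\text{something controlled})$ — should be forced into a compact subgroup, and then one uses the minimizing/tidy structure of $U$ to conclude $x$ actually lies in every $g^n U g^{-n}$. Concretely, I would argue as follows: since $C$ is compact and $U$ is open, the orbit $\{g^{-n} x g^n\}$ meets only finitely many cosets of $U$, but more usefully, consider $U' := U \cap g^{-1} U g \cap \dots$; the cleanest route is to invoke that $U_{g+}$ is precisely the set of $u \in U$ whose backward $g$-orbit stays inside $U$ (immediate from the definition of $U_{g+}$ via intersection), and then show that a $\para_G(g^{-1})$-element of $U$ whose orbit a priori only has \emph{compact} closure in fact has orbit contained in $U$. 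This last point is where condition $\TB$ (or rather its $g^{-1}$ analogue $U_{g--}$ closed) enters: the closed subgroup $U_{g--} = \bigcup_{n \le 0} g^n U_{g-} g^{-n}$ together with compactness of $C$ lets one bound the orbit. Alternatively, and perhaps more transparently, one shows directly that $U \cap \para_G(g^{-1})$ is a subgroup of $U$ that is contained in $g^{-1}(U \cap \para_G(g^{-1}))g$ — because $g^{-1} x g$ still has relatively compact backward orbit when $x$ does — hence $U \cap \para_G(g^{-1}) \subseteq \bigcap_{n \ge 0} g^n U g^{-n}$, \emph{provided} we know $U \cap \para_G(g^{-1})$ is contained in $U \cap gUg^{-1}$ to start the induction; but that containment is exactly what needs proof, so the induction does not obviously close without extra input.

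The main obstacle is therefore the reverse inclusion, specifically upgrading "relatively compact backward orbit" to "backward orbit literally inside the open compact subgroup $U$." I expect the correct tool is a combination of: (1) the orbit's closure $C$ is compact, hence covered by finitely many translates of the open subgroup $U$; (2) the flow structure $g^{-1} C g \subseteq \ol{\{g^{-n} x g^n \mid n \ge 1\}} \subseteq C$, so $C$ (or the subgroup it generates together with $x$) is $g^{-1}$-stable in a suitable sense; and (3) tidiness of $U$, via $\TA$ and the closedness in $\TB$ applied to $g^{-1}$, to pin the orbit into $U$ rather than merely into a compact overgroup. A slick alternative I would try first: pass to a minimizing/tidy subgroup and use that $\para_G(g^{-1}) = \para_G(g^{-1}) \cap U \cdot U_{g--}$-type decompositions from Baumgartner--Willis, but if we may only cite what is stated in the excerpt, the self-contained argument via $\TB$ for $g^{-1}$ and compactness is the one to flesh out.
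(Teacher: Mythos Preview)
Your proposal correctly handles the easy inclusion $U_{g+} \subseteq U \cap \para_G(g^{-1})$, and you correctly diagnose that the whole difficulty lies in the reverse inclusion. But the several strategies you sketch for that direction do not close, and you essentially acknowledge this. The induction idea fails for exactly the reason you state: knowing $g^{-1}xg \in \para_G(g^{-1})$ does not by itself place $g^{-1}xg$ back in $U$. The compactness-and-finitely-many-cosets idea is too coarse: it tells you the backward orbit visits only finitely many $U$-cosets, but gives no mechanism to force all of them to be the trivial coset.

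The missing step is a direct use of condition $\TA$ to \emph{reduce} the problem, rather than to ``pin the orbit into $U$'' at the end. Given $x \in U \cap \para_G(g^{-1})$, write $x = x_+ x_-$ with $x_+ \in U_{g+}$ and $x_- \in U_{g-}$ using $U = U_{g+}U_{g-}$. Since $x_+ \in U_{g+} \subseteq \para_G(g^{-1})$ by the easy inclusion, we get $x_- = x_+^{-1}x \in U_{g-} \cap \para_G(g^{-1})$. So it suffices to show $U_{g-} \cap \para_G(g^{-1}) \subseteq U_{g+}$. This is exactly the content of \cite[Lemma~9]{Willis94} (which uses $\TB$), and the paper simply cites it. Once you have made the $\TA$ reduction, the remaining statement is a clean assertion about elements of $U_{g-}$ with bounded backward orbit, and the tidy-below structure handles it; without that reduction you are trying to control a general element of $U$, which is why your attempts did not converge.
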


\begin{proof}
It is clear that $U_{g+} \le U \cap \para_G(g)$, so
\[
U \cap \para_G(g\inv)  = U_{g+}(U_{g-} \cap \para_G(g\inv)).
\]
By \cite[Lemma~9]{Willis94}, every element of $U_{g-} \cap \para_G(g\inv)$ also belongs to $U_{g+}$, so $U \cap \para_G(g\inv)  = U_{g+}$.  The proof that $U \cap \para_G(g) = U_{g-} = U_{g-}$ is similar.
\end{proof}

The \defbold{contraction group} of $g$ is
\[
\con_G(g) := \{x \in G \mid g^nxg^{-n} \rightarrow 1 \text{ as } n \rightarrow +\infty\}.
\]
More generally, given $K \le G$ closed, we define
\[
\con_G(g/K) := \{x \in G \mid g^nxg^{-n}K \rightarrow K \text{ as } n \rightarrow +\infty\},
\]
where the convergence is in the coset space $G/K$.

In the dynamics of conjugation by a cyclic subgroup $\grp{g}$, the main distinctions are between scale $1$ and scale greater than $1$, considering the scale of $g$ and of $g\inv$.

\begin{lem}[{\cite[Proposition~3.24]{BaumgartnerWillis}}]\label{lem:BW_scale_one}
The following are equivalent:
\begin{enumerate}[(i)]
\item $s_G(g)=1$;
\item There exists $U \in \mc{COS}(G)$ such that $gUg\inv \le U$;
\item $\con_G(g\inv)$ has compact closure;
\item $\para_G(g)$ is open.
\end{enumerate}
\end{lem}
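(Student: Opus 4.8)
\emph{Approach.} The equivalence (i)$\Leftrightarrow$(ii) is immediate from the definition of the scale, so the content lies in connecting these with (iii) and (iv). The plan is to prove the easy implications (ii)$\Rightarrow$(iii) and (ii)$\Rightarrow$(iv), then (iv)$\Rightarrow$(ii), and finally (iii)$\Rightarrow$(i), which is the substantive step. Throughout I would fix a subgroup $U$ that is tidy (equivalently minimizing) for $g$; such a $U$ exists because $\mc{COS}(G)\neq\emptyset$ and the index $|gVg\inv:gVg\inv\cap V|$ is always a positive integer, and its structure (Theorem~\ref{thm:Willis}) together with Lemma~\ref{lem:parabolic_tidy} is the main tool.

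\emph{The elementary implications.} For (i)$\Leftrightarrow$(ii): $s_G(g)=1$ says precisely that some $V\in\mc{COS}(G)$ has $|gVg\inv:gVg\inv\cap V|=1$, i.e.\ $gVg\inv\le V$. Given such a $V$, induction gives $g^nVg^{-n}\le V$ for all $n\ge 0$. If $x\in\con_G(g\inv)$ then $g^{-n}xg^n\in V$ for large $n$, so $x\in g^nVg^{-n}\le V$; hence $\con_G(g\inv)\subseteq V$ has compact closure, giving (ii)$\Rightarrow$(iii). For $v\in V$ the same inclusions yield $\{g^nvg^{-n}:n\ge 0\}\subseteq V$, so $V\subseteq\para_G(g)$; since $\para_G(g)$ is a subgroup (products and inverses of elements with relatively compact forward orbit again have relatively compact forward orbit) containing the open subgroup $V$, it is open, giving (ii)$\Rightarrow$(iv).

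\emph{From (iv) to (ii).} Assume $\para_G(g)$ is open. By Lemma~\ref{lem:parabolic_tidy}, $U\cap\para_G(g)=U_{g-}$, so $U_{g-}$ is open in the compact group $U$ and hence a compact open subgroup of $G$. Reading off $U_{g-}=\bigcap_{n\le 0}g^nUg^{-n}$ we get $gU_{g-}g\inv=\bigcap_{n\le 1}g^nUg^{-n}\subseteq U_{g-}$, so $V:=U_{g-}$ witnesses (ii) and $s_G(g)=1$.

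\emph{From (iii) to (i), and the main obstacle.} Set $U_{g++}=\bigcup_{n\ge 0}g^nU_{g+}g^{-n}$, which is closed by $\TB$ and an ascending union of compact open subgroups since $U_{g+}\le gU_{g+}g\inv$. Using $\TA$ and conjugation one checks $s_G(g)=|gU_{g+}g\inv:U_{g+}|$, whence $|g^nU_{g+}g^{-n}:U_{g+}|=s_G(g)^n$, so $U_{g++}$ is compact exactly when $s_G(g)=1$. A direct computation also gives $\con_G(g\inv)\subseteq U_{g++}$: if $g^{-n}xg^n\in U$ for all $n\ge n_0$ then $x\in\bigcap_{n\ge n_0}g^nUg^{-n}=g^{n_0}U_{g+}g^{-n_0}$. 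The remaining point---and the one I expect to be the main obstacle---is to show that $\con_G(g\inv)$ is cocompact in $U_{g++}$, precisely that $U_{g++}=\overline{\con_G(g\inv)}\,L$ for a suitable compact subgroup $L\le U_{g++}$ (a ``Levi'' piece); granting this, compactness of $\overline{\con_G(g\inv)}$ forces $U_{g++}$ compact, hence $s_G(g)=1$. This cocompactness is exactly where the fine structure of tidy subgroups---the splitting of $U_{g+}$ into a contracting part and a bounded ``Levi'' part, in the spirit of Baumgartner--Willis---genuinely enters. An alternative route is the contrapositive via a telescoping construction: if $s_G(g)>1$, inductively choose coset representatives for the ascending chain $U_{g+}\le gU_{g+}g\inv\le\cdots$ and assemble them into an element of $\con_G(g\inv)$ whose forward $g$-conjugates leave every compact set, the delicate part being to arrange convergence of the infinite product so that the element really lies in the contraction group.
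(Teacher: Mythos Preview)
The paper does not supply its own proof of this lemma: it is stated with a citation to \cite[Proposition~3.24]{BaumgartnerWillis} and used thereafter as a black box. There is therefore no argument in the paper to compare yours against.

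That said, your sketch is sound on the easy directions and correctly isolates where the real content lies. The equivalences (i)$\Leftrightarrow$(ii), (ii)$\Rightarrow$(iii), (ii)$\Rightarrow$(iv) are handled cleanly, and your (iv)$\Rightarrow$(ii) via $U_{g-}$ is legitimate and non-circular, since Lemma~\ref{lem:parabolic_tidy} rests on \cite[Lemma~9]{Willis94} rather than on the present statement. The implication (iii)$\Rightarrow$(i) is indeed the substantive one, and you leave it as an acknowledged gap: you reduce it to the cocompactness of $\overline{\con_G(g\inv)}$ in $U_{g++}$, but neither of your two proposed routes---the Levi-type splitting of $U_{g+}$, or the telescoping infinite-product construction---is actually carried out. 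Both are genuinely nontrivial and amount to reproving the core of the Baumgartner--Willis result, so what you have is an accurate outline rather than a complete proof.
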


In particular, we always have $s_{\para_G(g)}(g)=1$.  The scale of $g$ can in fact be obtained as the modular function of $g$ as an element of $\para_G(g\inv)$.

\begin{lem}\label{lem:anisotropic}
Let $G$ be a \tdlc group and let $g \in G$.  Then the following are equivalent:
\begin{enumerate}[(i)]
\item We have $\con(g) = \con(g\inv) = \triv$;
\item For every open subgroup $H$ of $G$, then $\bigcap_{n \in \Zb}g^nHg^{-n}$ is open.
\end{enumerate}
\end{lem}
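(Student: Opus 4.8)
The plan is to recognise condition (ii) as a statement about neighbourhood bases and then treat the two implications separately; (ii)$\Rightarrow$(i) will be purely formal, while (i)$\Rightarrow$(ii) will reduce, via uniscalarity, to a statement about an automorphism of a profinite group, whose core is the only genuinely delicate point. The basic remark I would make first is that, for an open subgroup $H\le G$, the subgroup $\bigcap_{n\in\Zb}g^nHg^{-n}$ is exactly the largest $g$-invariant subgroup of $G$ contained in $H$. Hence (ii) is equivalent to the assertion that the $g$-invariant open subgroups of $G$ form a neighbourhood basis of the identity: if (ii) holds then $\bigcap_{n\in\Zb}g^nHg^{-n}\le H$ is such a subgroup for every open $H$, and conversely a $g$-invariant open $K\le H$ satisfies $K\le\bigcap_{n\in\Zb}g^nHg^{-n}$.

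Granting this reformulation, (ii)$\Rightarrow$(i) is immediate. If $x\in\con_G(g)$ and $W$ is any $g$-invariant open subgroup, then $g^nxg^{-n}\in W$ for all large $n$, and $g$-invariance of $W$ gives $x\in g^{-n}Wg^n=W$; so $x$ lies in the intersection of all $g$-invariant open subgroups, which is $\{1\}$ because they form a basis. Thus $\con_G(g)=\{1\}$, and $\con_G(g^{-1})=\{1\}$ by the symmetry of (ii) under $g\mapsto g^{-1}$.

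For (i)$\Rightarrow$(ii), assume $\con_G(g)=\con_G(g^{-1})=\{1\}$; these groups are then trivially relatively compact, so Lemma~\ref{lem:BW_scale_one} gives $s_G(g)=s_G(g^{-1})=1$. Fixing a minimizing (equivalently, by Theorem~\ref{thm:Willis}, tidy) subgroup $V$ for $g$, the identity $|gVg^{-1}:gVg^{-1}\cap V|=s_G(g)=1$ forces $gVg^{-1}\le V$, and applying the same to $g^{-1}$ forces $g^{-1}Vg\le V$; hence $gVg^{-1}=V$, so $V$ is a $g$-invariant compact open subgroup. Since every open subgroup of $G$ meets $V$ in an open subgroup, it suffices to show that the $g$-invariant open subgroups contained in $V$ form a neighbourhood basis of $1$. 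Equivalently, writing $\phi$ for the automorphism of the profinite group $V$ induced by conjugation by $g$ (so that $\con_V(\phi^{\pm1})=\con_G(g^{\pm1})\cap V=\{1\}$), I would reduce to showing that the $\phi$-invariant open subgroups of $V$ form a basis.

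This last, profinite, statement is the heart of the argument, and I expect it to be the main obstacle. Suppose it fails: then some open normal subgroup $N\trianglelefteq V$ has infinite $\phi$-orbit (one may further arrange that $N$ contains no $\phi$-invariant open subgroup, so that the $\phi$-core $N^\phi:=\bigcap_{n\in\Zb}\phi^n(N)$ is non-open), and in particular $N_+:=\bigcap_{n\ge0}\phi^n(N)$ has infinite index. The favourable case is $N_+=\{1\}$: then $\{\phi^n(N)\}_{n\ge0}$ is a descending chain of open subgroups of the compact group $V$ with trivial intersection, hence (by compactness) a neighbourhood basis of $1$, so any $x\in N\setminus\{1\}$ satisfies $\phi^n(x)\in\phi^n(N)\to1$, giving $1\ne x\in\con_V(\phi)$ and contradicting (i). The remaining work is to reduce to this case: the natural route is to pass to the quotient of $V$ by $N^\phi$, which makes the forward intersection of the image of $N$ trivial and so produces a non-trivial contracting element there, and then to transport such an element back to $V$ — using that $N^\phi$, being $\phi$-invariant with $\con_{N^\phi}(\phi^{\pm1})=\{1\}$, again falls under the statement, by an induction. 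I would either push this reduction through by hand or, more cleanly, deduce the required profinite fact (an automorphism of a profinite group with trivial two-sided contraction group has a basis of invariant open subgroups) from the contraction-group results of \cite{BaumgartnerWillis}.
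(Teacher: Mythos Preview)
The paper states this lemma without proof or citation (and in fact never invokes it elsewhere), so there is no argument in the paper to compare against; I will simply assess your proposal.

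Your reformulation of (ii) as ``the $g$-invariant open subgroups form a neighbourhood basis of $1$'' is correct, and your argument for (ii)$\Rightarrow$(i) is clean and complete. The reduction of (i)$\Rightarrow$(ii) to a profinite statement --- producing a $g$-invariant compact open $V$ via uniscalarity and then working inside $V$ --- is also correct.

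The gap is in the profinite step. In your ``favourable case'' $N_+=\{1\}$ you assert that $\{\phi^n(N)\}_{n\ge0}$ is a descending chain; this is unjustified and generally false, since you only assumed $N\trianglelefteq V$ is open, not that $\phi(N)\le N$. Presumably you meant the partial intersections $M_n:=\bigcap_{0\le k\le n}\phi^k(N)$, which \emph{are} descending with trivial intersection, but then from $x\in N$ you only get $\phi^n(x)\in\phi^n(N)$, not $\phi^n(x)\in M_n$, so the contraction conclusion does not follow as written. Your inductive reduction via $V/N^\phi$ is likewise incomplete: you have not checked that the induced automorphism on the quotient still has trivial two-sided contraction, nor identified a parameter on which to induct.

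A much shorter completion is already available from the results the paper quotes. From $\con_G(g)=\{1\}$ and Lemma~\ref{lem:contraction_closure} one has $\nub_G(g)=\overline{\con_G(g)}=\{1\}$. Since $s_G(g)=s_G(g^{-1})=1$, a compact open subgroup $U$ is tidy for $g$ if and only if $gUg^{-1}=U$ (both $\TA$ and $\TB$ are then trivial), so the $g$-invariant compact open subgroups have trivial intersection. Intersecting each with your fixed $V$ and using compactness of $V$ --- a family of open subgroups of a profinite group that is closed under finite intersections and has trivial intersection is a neighbourhood basis of $1$ --- finishes the proof of (ii).
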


We say $g \in G$ is \defbold{uniscalar} if $s(g) = s(g\inv) = 1$, in other words, $g$ normalizes some compact open subgroup of $G$.

\begin{thm}[{\cite[Theorem 3.8]{BaumgartnerWillis}, \cite[Theorem 1]{Jaw}}]\label{thm:bw:relative_contraction}Let $G$ be a \tdlc group, let $g \in G$ and let $H$ be a closed subgroup of $G$ such that $gHg\inv = H$.  Then $\con_{G}(g/H) = \con_G(g)H$.\end{thm}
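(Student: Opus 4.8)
The plan is to prove the two inclusions $\con_G(g)H \subseteq \con_G(g/H)$ and $\con_G(g/H) \subseteq \con_G(g)H$ separately, with essentially all of the difficulty in the second. The first is a direct computation: if $x \in \con_G(g)$ and $h \in H$, then $gHg^{-1}=H$ gives $g^n h g^{-n} \in H$ for all $n$, so $g^n(xh)g^{-n}H = (g^nxg^{-n})(g^nhg^{-n})H = (g^nxg^{-n})H$, which tends to $H$ since $g^nxg^{-n} \to 1$; hence $xh \in \con_G(g/H)$, and as $x,h$ were arbitrary the inclusion follows.

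For the reverse inclusion, take $x \in \con_G(g/H)$; the goal is to find $h \in H$ with $xh^{-1} \in \con_G(g)$. First I would use symmetry to normalise the situation. Both $\con_G(g/H)$ and $\con_G(g)H$ are invariant under conjugation by $g$ (again because $gHg^{-1}=H$), and $x \in \con_G(g)H$ if and only if $g^Nxg^{-N} \in \con_G(g)H$; so, fixing a compact open $U \le G$ that is tidy for $g$ (Theorem~\ref{thm:Willis}) and replacing $x$ by a sufficiently high conjugate, I may assume $g^nxg^{-n} \in UH$ for all $n \ge 0$ — indeed $UH$ is open, so $g^nxg^{-n}H \to H$ forces $g^nxg^{-n} \in UH$ eventually. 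Refining further: for every compact open $V \le U$ we have $g^nxg^{-n} \in VH$ for all large $n$, so choosing decompositions $g^nxg^{-n} = u_nh_n$ with $u_n \in U$, $h_n \in H$, I may arrange $u_n \to 1$.

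It then remains to manufacture a single $h \in H$ with $(g^nhg^{-n})^{-1}h_n \to 1$, since (as $u_n \to 1$) this is equivalent to $g^n(xh^{-1})g^{-n} = u_nh_n(g^nh^{-1}g^{-n}) \to 1$. This is the crux, and here the tidy structure of $U$ is indispensable. Writing $u_n = a_nb_n$ with $a_n \in U_{g+}$, $b_n \in U_{g-}$ (using $\TA$), conjugation by positive powers of $g$ expands $U_{g+}$ while keeping it inside the \emph{closed} subset $U_{g++}$ (condition $\TB$) and contracts $U_{g-}$; pushing the recursion $u_{n+1}h_{n+1} = (gu_ng^{-1})(gh_ng^{-1})$ through this dichotomy and combining the closedness of $U_{g++}$ with the compactness of $U$, one extracts a convergent cofinal subnet of $(h_n)$, whose limit, suitably interpreted, supplies the required $h \in H$. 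One then checks directly that $g^n(xh^{-1})g^{-n} \to 1$, so that $x = (xh^{-1})h \in \con_G(g)H$.

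The main obstacle is precisely this last extraction: because $\con_G(g)$ need not be closed, no purely soft compactness argument is available, and one is forced to invoke tidiness — in particular the closedness clause $\TB$ — in order to control the $U_{g+}$-component of the $u_n$. This difficulty, and its resolution, is the content of \cite[Theorem~3.8]{BaumgartnerWillis} in the metrizable case and of \cite[Theorem~1]{Jaw} in general, and I would follow those arguments for the technical details.
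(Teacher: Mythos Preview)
The paper gives no proof of this statement: it is imported as a background result, the citation to \cite{BaumgartnerWillis} and \cite{Jaw} standing in lieu of an argument. Your proposal, after dispatching the easy inclusion correctly and setting up the hard one, explicitly hands the decisive extraction step back to those same two references, so there is nothing in the paper to compare it against.

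As a remark on the sketch itself: the step ``choosing decompositions \ldots\ I may arrange $u_n\to 1$'' tacitly uses a countable diagonal selection and hence first countability---this is exactly the gap that \cite{Jaw} closes over \cite{BaumgartnerWillis}---and the phrase ``extract a convergent cofinal subnet of $(h_n)$'' is not justified as written, since $H$ need not be compact. These are the genuine technical difficulties of the theorem, and you are right that tidiness (in particular $\TB$) is what resolves them; but the mechanism is not a direct limit of the $h_n$. Since you defer to the cited papers at precisely this point, this is a comment on the heuristic rather than a defect in the proposal.
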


The \defbold{nub} $\nub_G(g)$ is the intersection of all tidy subgroups for $g$ in $G$.

\begin{lem}[{\cite[Corollary~3.30]{BaumgartnerWillis}}]\label{lem:contraction_closure}
Let $G$ be a \tdlc group and let $g \in G$.  Then
\[
\ol{\con_G(g)} = \con_G(g)\nub_G(g).
\]
\end{lem}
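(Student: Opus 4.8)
The plan is to reduce the identity to two properties of the nub $N := \nub_G(g)$ — \textbf{(1)} $N \le \ol{\con_G(g)}$, and \textbf{(2)} $\con_G(g)N$ is closed in $G$ — from which the equality follows formally. Establishing (1) and (2) is where the real work lies; this is the content carried by the cited \cite[Corollary~3.30]{BaumgartnerWillis}.

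First record two elementary points. $N$ is a compact subgroup, being an intersection of compact open subgroups; and $N$ is $\grp{g}$-invariant, because conjugation by $g$ sends minimizing subgroups to minimizing subgroups — after conjugating by $g\inv$ one has $|g(gUg\inv)g\inv : g(gUg\inv)g\inv \cap gUg\inv| = |gUg\inv : gUg\inv \cap U|$ — and hence, by Theorem~\ref{thm:Willis}, permutes the tidy subgroups, so $gNg\inv = N$. In particular Theorem~\ref{thm:bw:relative_contraction} applies with $H = N$ and gives $\con_G(g)N = \con_G(g/N)$. Now assume (1) and (2). Since $\ol{\con_G(g)}$ is the closure of a subgroup it is itself a subgroup; together with (1) and $\con_G(g) \le \ol{\con_G(g)}$ this yields $\con_G(g)N \le \ol{\con_G(g)}$. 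Conversely, by (2) the set $\con_G(g)N$ is closed and contains $\con_G(g)$, hence contains $\ol{\con_G(g)}$. The two inclusions give $\ol{\con_G(g)} = \con_G(g)N$.

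For (1) and (2) I would fix a tidy subgroup $U$ and analyse the subgroup $U_{g++} = \bigcup_{n\ge0}g^nU_{g+}g^{-n}$: it is closed by $\TB$, it is $\grp{g}$-invariant, and it contains $\con_G(g\inv)$ (if $g^{-n}xg^n \to 1$ then some $g^{-N}xg^N$ lies in $U_{g+}$, so $x \in g^NU_{g+}g^{-N}$); dually, $U_{g--}$ is closed and contains $\con_G(g)$. Inside $U_{g++}$ the automorphism $g$ acts "expansively": $U_{g+}$ is a compact open subgroup of $U_{g++}$ with $U_{g+} \le gU_{g+}g\inv$ and $U_{g++} = \bigcup_{n\ge0}g^nU_{g+}g^{-n}$, while $\bigcap_{n\ge0}g^{-n}U_{g+}g^{n}$ is the largest $\grp{g}$-invariant subgroup of $U_{g+}$, which one identifies with $N$. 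The two substantive facts about $N$ to be proved — both part of the standard development of the nub — are then: $\con_G(g)\cap N$ is dense in $N$, which gives (1) since $N \le \ol{\con_G(g)\cap N} \le \ol{\con_G(g)}$; and $\con_G(g)N$ is closed, equivalently $\con_G(g/N)$ is closed in $G$.

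The main obstacle is this last closedness statement. By definition an element of $\ol{\con_G(g)}$ is approximable, on every compact subset of $G$, by genuinely contracting elements, but the approximating net need not converge; the point of (2) is that any failure of convergence is absorbed into the single fixed compact group $N$. Verifying this needs the fine structure of tidy subgroups beyond the formal properties recorded above — it is precisely the heart of \cite[Corollary~3.30]{BaumgartnerWillis}, which builds on \cite[Theorem~3.8]{BaumgartnerWillis} (our Theorem~\ref{thm:bw:relative_contraction}) together with the analysis of the closed subgroups $U_{g++}$, $U_{g--}$ and of the $g$-action on compact groups with no small invariant subgroups.
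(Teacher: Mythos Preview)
The paper does not give its own proof of this lemma: it is simply stated with the citation \cite[Corollary~3.30]{BaumgartnerWillis} and used as a black box. So there is nothing in the paper to compare your argument against.

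Your reduction is correct and cleanly stated: the equality $\ol{\con_G(g)} = \con_G(g)\nub_G(g)$ follows formally from (1) $\nub_G(g) \le \ol{\con_G(g)}$ together with (2) $\con_G(g)\nub_G(g)$ closed, and you correctly invoke Theorem~\ref{thm:bw:relative_contraction} to rewrite $\con_G(g)\nub_G(g)$ as $\con_G(g/\nub_G(g))$. You are also candid that the substantive content --- density of $\con_G(g)\cap N$ in $N$, and closedness of $\con_G(g/N)$ --- lies in the Baumgartner--Willis analysis and is not proved here. In that sense your proposal is not a self-contained proof but a correct road map, which is already more than the paper offers.

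One small caution: the identification of $\bigcap_{n\ge 0} g^{-n}U_{g+}g^{n}$ (equivalently $\bigcap_{n\in\Zb} g^nUg^{-n}$) with the nub $\nub_G(g)$ for a \emph{single} tidy $U$ is itself a nontrivial fact from the Baumgartner--Willis theory, not something that follows just from the definition of $\nub_G(g)$ as the intersection of \emph{all} tidy subgroups; you should flag it as part of what is being imported rather than as an obvious step.
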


The following is an easy consequence of Theorem~\ref{thm:bw:relative_contraction} and Lemma~\ref{lem:contraction_closure}.

\begin{cor}\label{cor:contraction_closure}
Let $G$ be a \tdlc group, let $g \in G$ and let $K \le G$ be compact and $\grp{g}$-stable.  Then $\con_G(g/K)$ is closed if and only if $K \ge \nub_G(g)$; in particular,
\[
\ol{\con_G(g)} = \con_G(g/\nub_G(g)).
\]
\end{cor}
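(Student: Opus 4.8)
The plan is to trade the coset space for $G$ itself using Theorem~\ref{thm:bw:relative_contraction}, which gives $\con_G(g/K) = \con_G(g)K$, and then to apply the closure formula $\ol{\con_G(g)} = \con_G(g)\nub_G(g)$ of Lemma~\ref{lem:contraction_closure}. The only topological input is that for $S \subseteq G$ and $K \le G$ compact one has $\ol{SK} = \ol S\,K$ (multiplication by the compact set $K$ is a closed map), so in particular $\ol S\,K$ is closed; this is where compactness of $K$ enters.

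First I would record that $\nub_G(g)$ is a compact $\grp g$-stable subgroup: it is an intersection of compact open subgroups, and conjugation by $g$ permutes the tidy subgroups for $g$ (for instance, $gUg\inv$ is tidy for $g$ whenever $U$ is, as one checks directly from $\TA$ and $\TB$), hence fixes their intersection. So Theorem~\ref{thm:bw:relative_contraction} with $H = \nub_G(g)$ gives $\con_G(g/\nub_G(g)) = \con_G(g)\nub_G(g) = \ol{\con_G(g)}$, which is closed --- this is the final displayed identity, and it already proves the ``if'' direction when $K = \nub_G(g)$. For general $K \ge \nub_G(g)$ the ``if'' direction is the same computation: $\con_G(g/K) = \con_G(g)K = \con_G(g)\nub_G(g)K = \ol{\con_G(g)}\,K$, closed. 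Conversely, if $\con_G(g)K$ is closed then $\con_G(g)K = \ol{\con_G(g)K} = \ol{\con_G(g)}\,K = \con_G(g)\nub_G(g)K$; as the reverse inclusion is automatic this forces $\con_G(g)\nub_G(g)K \subseteq \con_G(g)K$, and testing at $1\cdot n\cdot 1$ for $n \in \nub_G(g)$ yields $\nub_G(g) \subseteq \con_G(g)K$.

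The remaining --- and, I expect, only genuinely delicate --- step is to upgrade $\nub_G(g) \subseteq \con_G(g)K$ to $\nub_G(g) \le K$. Writing a given $n \in \nub_G(g)$ as $n = ck$ with $c \in \con_G(g)$ and $k \in K$, one has $\{g^m c g^{-m} : m \in \Zb\} = \{(g^m n g^{-m})(g^m k^{-1}g^{-m}) : m \in \Zb\} \subseteq \nub_G(g)K$, so $c \in \con_G(g)$ has relatively compact conjugation orbit; feeding this into Willis's description of tidy subgroups (the forward conjugates of $c$ enter any fixed tidy $U$ because $c \in \con_G(g)$, and property $\TB$ should control the rest) ought to place $c$ in every tidy subgroup, hence in $\nub_G(g)$. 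The subtle point --- and the real content of the corollary --- is then why the $\nub_G(g)$-component $c$ of $n$ must lie in $K$ and not merely in $\nub_G(g)$; equivalently, why $\con_G(g) \cap \nub_G(g)K \subseteq K$ once $\con_G(g)K$ is known to be closed. This does not follow from the formal identities above: it uses the structure of the $\grp g$-action on the nub, and I would settle it either by a recurrence argument on the compact $\grp g$-group $\nub_G(g)$, or, more expediently, by invoking the theorem of Baumgartner--Willis (extended by Jaworski) that $\nub_G(g)$ is the \emph{smallest} compact $\grp g$-stable subgroup $N$ with $\con_G(g/N)$ closed, relative to which the corollary --- together with its ``in particular'' clause --- is a direct reformulation.
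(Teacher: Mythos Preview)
Your treatment of the ``if'' direction and the ``in particular'' clause is correct and matches what the paper has in mind: apply Theorem~\ref{thm:bw:relative_contraction} to rewrite $\con_G(g/K)=\con_G(g)K$, use $\ol{SK}=\ol{S}K$ for compact $K$, and feed in Lemma~\ref{lem:contraction_closure}.  For the ``only if'' direction you correctly isolate the one nontrivial step---deducing $\nub_G(g)\le K$ from $\nub_G(g)\subseteq\con_G(g)K$---and you are right that it does not follow from the bare identity $\ol{\con_G(g)}=\con_G(g)\nub_G(g)$ by formal manipulation alone.  Your attempted reduction via ``$c=nk^{-1}$ has relatively compact two-sided conjugation orbit'' does not close the gap: such a $c$ lies in $\con_G(g)\cap\para_G(g)\cap\para_G(g^{-1})$, but this intersection need not be trivial (take $G$ compact with $g$ acting as a shift), so one cannot conclude $c\in K$ this way.

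Your fallback---invoking the Baumgartner--Willis characterization of $\nub_G(g)$ as the \emph{smallest} compact $\grp{g}$-stable $N$ with $\con_G(g/N)$ closed---is the right move, and it is not circular in context.  That minimality statement is part of the same cluster of results in \cite{BaumgartnerWillis} that the paper is already citing; indeed Corollary~3.30 there contains more than the single identity the paper quotes as Lemma~\ref{lem:contraction_closure}.  The paper itself offers no proof beyond ``easy consequence of'' those references, so you and the paper are doing the same thing: importing the result from \cite{BaumgartnerWillis} (and \cite{Jaw}) rather than reproving it.  The gap you flag would be genuine if one insisted on deriving the corollary solely from the two displayed statements as literally quoted, but that is a quibble with the paper's terse citation rather than a defect in your argument.
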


\begin{lem}\label{lem:scale_subgroup}
Let $G$ be a \tdlc group, let $\alpha \in \Aut(G)$ and let $H$ be a closed $\alpha$-stable subgroup of $G$.  Then $s_H(\alpha) \le s_G(\alpha)$; if $\con(\alpha^{-1}) \le H$ then equality holds.
\end{lem}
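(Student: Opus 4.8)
The plan is to prove the two inequalities separately. \emph{The inequality $s_H(\alpha)\le s_G(\alpha)$ (for any closed $\alpha$-stable $H$).} The one elementary fact I would use is that if $A\le B$ are subgroups of a group and $K$ is any subgroup, then $[B\cap K:A\cap K]\le[B:A]$, since $(A\cap K)b\mapsto Ab$ is a well-defined injection of coset spaces. Let $U\in\mc{COS}(G)$ be minimizing for $\alpha$ and put $V:=U\cap H\in\mc{COS}(H)$. Using $\alpha(H)=H$ one has $\alpha(V)=\alpha(U)\cap H$, and applying the elementary fact with $B=\alpha(U)$, $A=\alpha(U)\cap U$, $K=H$ gives
\[
[\alpha(V):\alpha(V)\cap V]=[\alpha(U)\cap H:\alpha(U)\cap U\cap H]\le[\alpha(U):\alpha(U)\cap U]=s_G(\alpha),
\]
whence $s_H(\alpha)\le s_G(\alpha)$.

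\emph{The reverse inequality, under $\con(\alpha^{-1})\le H$.} Fix $U$ tidy for $\alpha$ in $G$ and, adapting the notation of $\TA$--$\TB$ to automorphisms, write $U_+=\bigcap_{n\ge0}\alpha^n(U)$, $U_-=\bigcap_{n\le0}\alpha^n(U)$, $U_{++}=\bigcup_{n\ge0}\alpha^n(U_+)$. A routine computation from $\TA$ (using $\alpha(U_-)\le U_-$, $U_+\le\alpha(U_+)$ and $\alpha(U_+)\cap U=U_+$) shows $s_G(\alpha)=[\alpha(U_+):U_+]$. By $\TB$, $U_{++}$ is a closed, hence \tdlc, subgroup, and $U_+$ is a compact open subgroup of it (openness by a Baire argument applied to $U_{++}=\bigcup_n\alpha^n(U_+)$). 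The crucial input is the identity $U_{++}=\con_G(\alpha^{-1})U_+$; since $\con_G(\alpha^{-1})\le H$ this yields $\alpha(U_+)\subseteq U_{++}\subseteq HU_+$, and as $\alpha(U_+)$ is a group containing $U_+$, writing $x=hu$ with $h\in H$, $u\in U_+$ forces $h=xu^{-1}\in\alpha(U_+)\cap H$, so $\alpha(U_+)=(\alpha(U_+)\cap H)U_+$. Hence, using $\alpha(U_+)\cap H=\alpha(U_+\cap H)$,
\[
s_G(\alpha)=[\alpha(U_+):U_+]=[\alpha(U_+)\cap H:U_+\cap H]=[\alpha(U_+\cap H):U_+\cap H].
\]
Now set $K:=U_{++}\cap H$, a closed $\alpha$-stable subgroup of $H$. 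Then $U_+\cap H$ is compact open in $K$ with $U_+\cap H\le\alpha(U_+\cap H)$, so it is minimizing for $\alpha^{-1}$, hence tidy for $\alpha$, in $K$, and $\bigcap_{n\ge0}\alpha^n(U_+\cap H)=U_+\cap H$; thus the last index above equals $s_K(\alpha)$. Since $K\le H$ is closed and $\alpha$-stable, the inequality already proved gives $s_K(\alpha)\le s_H(\alpha)$. Combining, $s_G(\alpha)=s_K(\alpha)\le s_H(\alpha)\le s_G(\alpha)$, so equality holds throughout.

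\emph{The main obstacle.} The hard part is the identity $U_{++}=\con_G(\alpha^{-1})U_+$ for a tidy subgroup $U$ — equivalently, that $\con_G(\alpha^{-1})$ surjects onto the coset space $U_{++}/U_+$ — since this is exactly where the algebraic hypothesis $\con(\alpha^{-1})\le H$ gets converted into the statement that the expansion of $\alpha$ on $U_+$ already takes place inside $H$. It follows from the Baumgartner--Willis analysis of the contraction and parabolic subgroups attached to a tidy subgroup, together with $\nub_G(\alpha)\le U_+$ and Lemma~\ref{lem:contraction_closure} (giving $\overline{\con_G(\alpha^{-1})}=\con_G(\alpha^{-1})\nub_G(\alpha)$), but I would want to verify it carefully for a general automorphism rather than an inner one. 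The remaining ingredients — closedness of $U_{++}$, openness of $U_+$ in $U_{++}$ and of $U_+\cap H$ in $K$, and tidiness of $U_+\cap H$ in $K$ — are routine.
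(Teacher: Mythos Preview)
Your argument is correct. The paper's own proof is much terser: it simply cites \cite[Proposition~4.3]{WillisFurther} for the inequality $s_H(\alpha)\le s_G(\alpha)$ and \cite[Proposition~3.21]{BaumgartnerWillis} for the statement $s_G(\alpha)=s_{\overline{\con(\alpha^{-1})}}(\alpha)$, then sandwiches $\overline{\con(\alpha^{-1})}\le H\le G$. Your proof of the first inequality is the standard elementary one (and is essentially what is in \cite{WillisFurther}); your proof of the second is effectively an unpacking of the Baumgartner--Willis argument behind their Proposition~3.21, routed through a slightly different intermediate subgroup $K=U_{++}\cap H$ rather than $\overline{\con(\alpha^{-1})}$ itself. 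You are right that the only nontrivial input is the identity $U_{++}=\con_G(\alpha^{-1})U_+$ for a tidy $U$; this is indeed a Baumgartner--Willis result (it appears in \cite{BaumgartnerWillis} in the course of analysing the relation between contraction groups and tidy subgroups, and combining $\nub_G(\alpha)\le U_+$ with Lemma~\ref{lem:contraction_closure} as you suggest does yield it). So the two proofs rest on the same external input; yours is more self-contained but not genuinely different in strategy.
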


\begin{proof}
In \cite[Proposition 4.3]{WillisFurther} it is shown that $s_H(\alpha) \le s_G(\alpha)$.  By \cite[Proposition~3.21]{BaumgartnerWillis} we have $s_G(\alpha) = s_K(\alpha)$ for $K = \ol{\con(\alpha^{-1})}$, so if $H \ge K$ then $s_G(\alpha) = s_H(\alpha)$.
\end{proof}

\begin{cor}\label{cor:scale_subgroup:modular}
Let $G$ be a \tdlc group, let $\alpha \in \Aut(G)$ and let $H = \para_G(\alpha\inv)$.  Then
\[
s_G(\alpha) = s_H(\alpha) =  \Delta_H(\alpha).
\]
\end{cor}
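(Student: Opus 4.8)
The plan is to deduce this from Lemma~\ref{lem:scale_subgroup}, Lemma~\ref{lem:BW_scale_one}, and the modular-function formula $\Delta_H(g) = s_H(g)/s_H(g\inv)$. Write $H = \para_G(\alpha\inv)$. First I would record the two structural facts about $H$ that are needed for it to be treated as a \tdlc group in its own right with $\alpha$ restricting to an automorphism: $H$ is closed in $G$ (a standard property of parabolic subgroups, already implicit in the remark that $s_{\para_G(g)}(g)=1$), and $H$ is $\alpha$-stable. The latter is immediate from the definition: if $\{\alpha^{-n}(x) \mid n \ge 0\}$ has compact closure, then so does $\{\alpha^{-n}(\alpha^{\pm 1}(x)) \mid n \ge 0\}$, since this set differs from $\{\alpha^{-n}(x)\mid n\ge 0\}$ by adjoining or deleting a single point; hence $\alpha(H) = H$.

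Next I would check $\con_G(\alpha\inv) \le H$: if $\alpha^{-n}(x) \to 1$, then $\{\alpha^{-n}(x) \mid n \ge 0\}$ is a convergent sequence together with its limit, hence relatively compact, so $x \in \para_G(\alpha\inv) = H$. Lemma~\ref{lem:scale_subgroup}, applied to the closed $\alpha$-stable subgroup $H$, then gives the first equality $s_G(\alpha) = s_H(\alpha)$ (the hypothesis $\con(\alpha\inv) \le H$ being exactly what we just verified).

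For the remaining equality, I would observe that $\para_H(\alpha\inv) = H \cap \para_G(\alpha\inv) = H$ is (trivially) open in $H$, so Lemma~\ref{lem:BW_scale_one} applied to $\alpha\inv$ inside $H$ — equivalently, the observation that $s_{\para_K(h)}(h)=1$ for every \tdlc group $K$ — yields $s_H(\alpha\inv) = 1$. Substituting into $\Delta_H(\alpha) = s_H(\alpha)/s_H(\alpha\inv)$ gives $\Delta_H(\alpha) = s_H(\alpha)$, and combining with the previous paragraph completes the proof: $s_G(\alpha) = s_H(\alpha) = \Delta_H(\alpha)$.

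This is essentially an assembly of earlier results, so there is no real obstacle; the only point that merits a word of care is the closedness (hence local compactness) of $\para_G(\alpha\inv)$, which is what licenses speaking of $\Delta_H$ and of applying Lemma~\ref{lem:scale_subgroup} and Lemma~\ref{lem:BW_scale_one} to $H$.
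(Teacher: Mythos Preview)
Your proposal is correct and follows essentially the same route as the paper's proof: apply Lemma~\ref{lem:scale_subgroup} (using $\con_G(\alpha\inv)\le H$) for the first equality, then use Lemma~\ref{lem:BW_scale_one} to get $s_H(\alpha\inv)=1$ and the formula $\Delta_H(\alpha)=s_H(\alpha)/s_H(\alpha\inv)$ for the second. You simply spell out the verifications (closedness, $\alpha$-stability, the containment $\con_G(\alpha\inv)\le H$) that the paper leaves implicit.
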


\begin{proof}
The first equality is immediate from Lemma~\ref{lem:scale_subgroup}, while the second follows from the fact that $s_H(\alpha\inv)=1$ (by Lemma~\ref{lem:BW_scale_one}) and the fact that $\Delta_H(\alpha) = s_H(\alpha)/s_H(\alpha\inv)$.
\end{proof}

\begin{lem}[{\cite[Lemma 4.3]{CRW-TitsCore}}]\label{lem:tidy_stability}
Let $G$ be a \tdlc group, let $g \in G$ and let $U$ be tidy above for $g$.  Then for every $u \in U \cap g Ug\inv$ there is $r \in U$ such that, for every $n \in \Zb$,
\[
(ug)^n \in rg^n U.
\]
\end{lem}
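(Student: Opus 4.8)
The plan is to recast the statement as the nonemptiness of an intersection of cosets, and then prove it by compactness, using the tidy-above factorisation of $U$ as the engine. Set $v := ug$. From $u \in U \cap gUg\inv$ one reads off three elementary facts: $vU = gU$ (because $g\inv u g \in U$), $v\inv U v = g\inv U g$ (because $u$ normalises $U$), and $vUv\inv = gUg\inv$ (because $u$ lies in, hence normalises, the group $gUg\inv$). Now put $P_n := v^n g^{-n}$ and $K_n := g^n U g^{-n}$; each $K_n$ is a compact open subgroup, and a direct computation shows that, for any $r \in G$ and any $n \in \Zb$,
\[
v^n \in r g^n U \iff r \in P_n K_n .
\]
Since $P_0 K_0 = U$, every $r$ satisfying all of these conditions automatically lies in $U$, so the lemma is equivalent to the assertion that $\bigcap_{n \in \Zb} P_n K_n \neq \emptyset$.

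Next I would reduce to a finite problem. Each $P_n K_n$ is a coset of the compact group $K_n$, hence closed, so the sets $P_n K_n \cap U$ form a family of closed subsets of the compact group $U$. By the finite intersection property it is therefore enough to produce, for each $N \in \Nb$, an element $r_N \in U$ with $v^n \in r_N g^n U$ for all $|n| \le N$.

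To construct $r_N$ I would invoke the tidy-above hypothesis $\TA$: it gives $U = U_{g+}U_{g-}$ and, taking inverses, also $U = U_{g-}U_{g+}$, together with $g\inv U_{g+} g \le U_{g+} \le U$ and $g U_{g-} g\inv \le U_{g-} \le U$; note also that $g^k U_{g+} g^{-k} = \bigcap_{m \ge k} g^m U g^{-m} \subseteq K_n$ whenever $0 \le k \le n$, and the mirror inclusion for negative indices. Writing $u = ab$ with $a \in U_{g+}$ and $b \in U_{g-}$, the hypothesis $u \in gUg\inv$ forces $g\inv b g \in U_{g-}$. Expanding $P_n$ through the recursion $P_{n+1} = P_n \cdot g^n u g^{-n}$ for $n \ge 0$, and the mirror-image recursion for negative powers, displays $P_n$ as an alternating word in the conjugates $g^k a g^{-k}$ and $g^k b g^{-k}$: for $0 \le k < n$ the $a$-factors lie in $K_n$ and the $b$-factors lie in $U_{g-}$, and symmetrically for $-N \le n \le 0$. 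One then selects $r_N \in U$ so as to absorb the finitely many remaining factors that fail to lie in the appropriate $K_n$, using the two-sided factorisation $U = U_{g+}U_{g-} = U_{g-}U_{g+}$ to move $U_{g+}$- and $U_{g-}$-parts past one another, and verifies the $2N+1$ coset memberships simultaneously.

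I expect this last step to be the genuine obstacle: producing a single $r_N$ that straightens out the positive powers $v, v^2, \dots, v^N$ and the negative powers $v\inv, \dots, v^{-N}$ at once. The hypothesis $u \in U \cap gUg\inv$ is not symmetric under $g \mapsto g\inv$, so the grip one has on $P_n$ for $n > 0$ is of a different nature from that for $n < 0$, and reconciling the two is delicate — most likely it requires refining $r_N$ within a shrinking chain of cosets of the groups $\bigcap_{|m| \le N} g^m U g^{-m}$ rather than a single closed-form choice. The reformulation in the first paragraph and the compactness reduction in the second, by contrast, are routine.
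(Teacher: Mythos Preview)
The paper does not prove this lemma; it is quoted from \cite[Lemma~4.3]{CRW-TitsCore} and used as a black box. So there is no in-paper argument to compare against, and your proposal has to be assessed on its own terms.

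Your reformulation --- setting $P_n=(ug)^ng^{-n}$, $K_n=g^nUg^{-n}$, and recasting the claim as $\bigcap_{n\in\Zb}P_nK_n\neq\emptyset$ --- is correct, as is the observation that $P_0K_0=U$ and the compactness reduction to finite subfamilies. (The three ``elementary facts'' you record about $v=ug$ are true but play no role afterwards.) The observation that $u\in gUg^{-1}$ forces $g^{-1}bg\in U_{g-}$, where $u=ab$ is a tidy-above factorisation, is also correct and is the right ingredient to feed into an induction.

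The gap, which you honestly flag, is that you never construct $r_N$. The sentence ``one then selects $r_N\in U$ so as to absorb the finitely many remaining factors'' is a statement of intent, not an argument, and the difficulty you name --- reconciling positive and negative powers under a hypothesis that is not symmetric in $g\mapsto g^{-1}$ --- is exactly where the content lies. Routing through compactness does not bypass this: showing each finite intersection $\bigcap_{|n|\le N}P_nK_n$ is nonempty requires essentially the same two-sided bookkeeping as exhibiting a single $r$ for all $n$. So as written this is a clean reformulation followed by an admission that the heart of the proof is missing. The argument in the cited source is constructive rather than by compactness, building $r$ by tracking the $U_{g+}$- and $U_{g-}$-components through an induction; your framework could be completed along the same lines, but the compactness wrapper neither helps nor hinders.
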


\begin{lem}\label{lem:uniform_contract}
Let $G$ be a locally compact group, let $C \subseteq G$ and $K \le G$ be compact, and suppose that $g \in \N_G(K)$ is such that $C \subseteq \con_G(g/K)$.  Then given an identity neighbourhood $O$ such that $O = OK$, there is some $n_0$ such that $g^nCg^{-n} \subseteq O$ for all $n \ge n_0$.
\end{lem}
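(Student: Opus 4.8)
The plan is to keep everything inside $G$ by working with \emph{saturated} identity neighbourhoods (sets $V$ with $V=VK$), rather than transporting the problem to the homogeneous space $G/K$, on which conjugation by $g$ is merely a homeomorphism. First I would reduce to $O$ open: replace $O$ by $O^{\circ}K$, which is open, saturated, and still a neighbourhood of $1$ contained in $O$, and shrink by local compactness so that $\overline{O}$ is compact; then fix a compact saturated identity neighbourhood $F\subseteq O$. By Theorem~\ref{thm:bw:relative_contraction} we have $H:=\con_{G}(g/K)=\con_{G}(g)K$, and $K$ normalises $\con_{G}(g)$: for $k\in K$ and $a\in\con_{G}(g)$ the conjugate $g^{n}(kak^{-1})g^{-n}=(g^{n}kg^{-n})(g^{n}ag^{-n})(g^{n}k^{-1}g^{-n})$ tends to $1$, since the outer factors stay in the compact group $K$, the inner factor tends to $1$, and conjugation is uniformly continuous over $K$. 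Hence $H$ is a $g$-stable subgroup of $G$ containing $K$, with $C\subseteq H$. Put $B_{m}:=\{z\in H:\ g^{n}zg^{-n}\in F\text{ for all }n\ge m\}$. Then $B_{m}$ is closed in $H$, one has $g^{-1}B_{m}g=B_{m+1}$ so that $B_{m}=g^{-m}B_{0}g^{m}$ and the $B_{m}$ increase with $m$, and $\bigcup_{m}B_{m}=H$ (for $z\in H$, $g^{n}zg^{-n}K\to K$ in $G/K$, so $g^{n}zg^{-n}$ eventually lies in the saturated neighbourhood $F$). The lemma is then equivalent to the assertion that $C\subseteq B_{m_{0}}$ for some $m_{0}$, since this says exactly that $g^{n}Cg^{-n}\subseteq F\subseteq O$ for all $n\ge m_{0}$.

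I would reduce this to the \emph{local} statement that the contraction into $F$ is already uniform on a neighbourhood of the identity: \emph{there exist $m_{1}$ and a neighbourhood $Q$ of $1$ in $G$ with $Q\cap H\subseteq B_{m_{1}}$.} Granting it, argue by contradiction: if $C\not\subseteq B_{m}$ for every $m$, then, using compactness of $C$ and passing to a subnet, there are $c_{i}\to c_{*}\in C$ and $n_{i}\to\infty$ with $g^{n_{i}}c_{i}g^{-n_{i}}\notin F$. Write $c_{i}=c_{*}d_{i}$ with $d_{i}=c_{*}^{-1}c_{i}\in H$, so $d_{i}\to 1$. Fix saturated identity neighbourhoods $F_{1},F_{2}$ with $F_{1}F_{2}\subseteq F$ (possible since $F$ is a saturated neighbourhood of $1$ and $K$ is compact). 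Since $c_{*}\in H$, its conjugates $g^{n}c_{*}g^{-n}$ eventually enter $F_{1}$, so $g^{n_{i}}c_{*}g^{-n_{i}}\in F_{1}$ for large $i$. Applying the local statement with $F_{2}$ in place of $F$ yields $Q,m_{1}$ with $g^{n}zg^{-n}\in F_{2}$ for all $z\in Q\cap H$ and $n\ge m_{1}$; as $d_{i}\to1$ and $d_{i}\in H$, eventually $d_{i}\in Q\cap H$ and $n_{i}\ge m_{1}$, so $g^{n_{i}}d_{i}g^{-n_{i}}\in F_{2}$. But then $g^{n_{i}}c_{i}g^{-n_{i}}=(g^{n_{i}}c_{*}g^{-n_{i}})(g^{n_{i}}d_{i}g^{-n_{i}})\in F_{1}F_{2}\subseteq F$ for large $i$, contradicting the choice of the $c_{i}$.

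The genuine obstacle is the local statement --- equivalently, that some $B_{m_{1}}$ contains a neighbourhood of $1$ in $H$, i.e.\ the increasing exhaustion $H=\bigcup_{m}B_{m}$ by closed sets does not fail to stabilise near the identity. This can fail for an arbitrary such exhaustion, so the group structure must be used. My first attempt would be Baire category: pass to the closed subgroup $\overline{H}=\overline{\con_{G}(g)}\,K=\con_{G}(g)\nub_{G}(g)K$ (Lemma~\ref{lem:contraction_closure}), which, being a closed subgroup of $G$, is locally compact and hence a Baire space; noting that $K$ also normalises $\nub_{G}(g)$, the group $K':=K\nub_{G}(g)$ is a compact $g$-stable subgroup, so $\con_{G}(g/K')=\overline{H}$ is \emph{closed} by Corollary~\ref{cor:contraction_closure}, and $\overline{H}$ is the union of the countably many closed sets $\{z\in\overline{H}:\ g^{n}zg^{-n}\in F'\ \text{for all }n\ge m\}$ for a compact $K'$-saturated neighbourhood $F'$. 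Baire's theorem then puts a neighbourhood of $1$ inside one of them (after passing to a difference set, at the cost of replacing $F'$ by $F'(F')^{-1}$). The delicacy I expect to be the real crux is reconciling the saturation levels: $F$ is only $K$-saturated and the given $O$ need not contain $\nub_{G}(g)$, whereas the $\overline{H}$-argument naturally controls things only modulo $\nub_{G}(g)$, so one must additionally analyse how the compact set $C$ sits relative to the nub --- for instance by invoking Siebert's structure theorem for locally compact groups admitting a contracting automorphism, applied to $\overline{\con_{G}(g)}/\nub_{G}(g)$ --- in order to transfer the local statement back from $\overline{H}$ to $H$.
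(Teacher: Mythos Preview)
Your proof is incomplete: you explicitly leave the ``local statement'' open and propose to close it via Baire on $\overline{H}$, the nub, and Siebert's structure theorem, but you never carry this out, and you yourself flag the saturation mismatch (you only control things modulo $K\nub_{G}(g)$, not modulo $K$) as unresolved.

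The paper avoids this entire detour by applying the Baire category theorem directly to the compact set $C$, which is automatically a Baire space.  Writing $O_{-}=\bigcap_{n\ge 0}g^{-n}O_{2}g^{n}$ for a compact $K$-saturated identity neighbourhood $O_{2}$ with $O_{2}O_{2}\subseteq O$, one has $C\subseteq\bigcup_{n\ge 0}g^{-n}O_{-}g^{n}$, so some $g^{-n_{1}}O_{-}g^{n_{1}}\cap C$ has nonempty interior in $C$, say containing $hU\cap C$ with $h\in C$ and $U$ open.  One then covers $C\subseteq\bigcup_{i=1}^{k}h_{i}U$ with $h_{i}\in C$ and uses only that the finitely many elements $h_{i}h^{-1}$ lie in $\con_{G}(g/K)$---hence each lands in $g^{-n_{0}}O_{-}g^{n_{0}}$ for a common $n_{0}$---to conclude.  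There is no appeal to Theorem~\ref{thm:bw:relative_contraction}, Lemma~\ref{lem:contraction_closure}, the nub, or any structure theory of contraction groups: the only structural input is that $\con_{G}(g/K)$ is closed under products and inverses, so that $h_{i}h^{-1}\in\con_{G}(g/K)$.  Your reduction to a local statement is not wrong in spirit, but the version you formulate (a neighbourhood of $1$ in the possibly non-closed group $H$) is stronger than what the argument needs and is exactly what forces you toward the closure $\overline{H}$ and the attendant saturation problem; doing Baire inside the compact set $C$ rather than inside $H$ is what makes the paper's argument short.
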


\begin{proof}
Let $O_2$ be a compact identity neighbourhood such that $O_2O_2 \subseteq O$, and let $O_- = \bigcap_{n \ge 0}g^{-n}O_2g^n$.  Since $C \subseteq \con_G(g/K)$, we see that $C$ is contained in the increasing union $\bigcup_{n \ge 0}g^{-n}O_-g^{n}$.  By the Baire Category Theorem, there is some $n_1$ such that the interior of $g^{-n_1}O_-g^{n_1} \cap C$ is nonempty, so it contains $hU$ for some $h \in G$ and open identity neighbourhood $U$.  Since $C$ is compact we have $C \subseteq \bigcup^k_{i=1}h_iU$ for some $h_1,\dots,h_k \in C$.  We see that also $h_1h\inv,\dots,h_kh\inv \in C$, so there is some $n_0 \ge n_1$ such that $h_ih\inv \in g^{-n_0}O_-g^{n_0}$ for $1 \le i \le k$.  Then for all $n \ge n_0$ and $c \in C$, we have
\[
c \in \bigcup^k_{i=1}(h_i h\inv)hU \subseteq (g^{-n}O_-g^n)(g^{-n}O_-g^n) \subseteq O_2O_2 \subseteq O. \qedhere
\]
\end{proof}

\subsection{Nonpositively curved spaces}

A \defbold{geodesic} of a metric space $X$ is an isometric embedding of a convex subset of the Euclidean line $\Rb$ into $X$; we say $X$ is a \defbold{geodesic space} if for all $x,y \in X$ there is a geodesic $\gamma: [0,d(x,y)] \rightarrow X$ such that $\gamma(0)=x$ and $\gamma(d(x,y))=y$.

Let $X$ be a geodesic space.  Two geodesic rays $\gamma_1,\gamma_2: \Rb_{\ge 0} \rightarrow X$ are \defbold{asymptotic} to one another, written $\gamma_1 \sim \gamma_2$, if $d(\gamma_1(t),\gamma_2(t))$ is bounded over all $t$.  An equivalence class of rays is called a \defbold{point at infinity} or \defbold{boundary point} of $X$, and the set of all points at infinity is denoted $\partial X$, the \defbold{boundary} of $X$.  Write $\gamma(\infty)$ for the equivalence class of the geodesic ray $\gamma$.

Given a geodesic line $\gamma: \Rb \rightarrow X$, there are two associated rays: $\gamma_+,\gamma_-:\Rb_{\ge 0} \rightarrow X$, where $\gamma_+(t) = \gamma(t)$ and $\gamma_-(t) = \gamma(-t)$.  We then define $\gamma(+\infty) = \gamma_+(\infty)$ and $\gamma(-\infty) = \gamma_-(\infty)$.  Note that since $\gamma$ is geodesic, the rays $\gamma_+$ and $\gamma_-$ move away from each other, so $\gamma(+\infty) \neq \gamma(-\infty)$.  Two geodesic lines $\gamma,\gamma'$ are \defbold{asymptotic} if $\gamma_+ \sim \gamma'_+$ and $\gamma_- \sim \gamma'_-$, in other words $\gamma(+\infty) = \gamma'(+\infty)$ and $\gamma(-\infty) = \gamma'(-\infty)$.

A \defbold{geodesic triangle} is a triple of geodesics $\gamma_i: [0,a_i] \rightarrow X$ ($i \in \{0,1,2\}$) such that $\gamma_{i+1}(0) = \gamma_i(a_i)$ (reading subscripts modulo $3$).  A \defbold{generalized geodesic triangle} is defined analogously, except the geodesics forming the sides can have infinite length, with some of the corners being points at infinity.  Given a geodesic triangle $(\gamma_0,\gamma_1,\gamma_2)$, a \defbold{comparison triangle} is a geodesic triangle $(\gamma'_0,\gamma'_1,\gamma'_2)$ in the Euclidean plane $\Rb^2$ with the same lengths.  A (generalized) geodesic triangle is \defbold{$\delta$-thin} if for every point $x$ on any of the sides (not counting the corners), there is a point $y$ lying on a different side of the triangle, such that $d(x,y) \le \delta$.

Let $X$ be a geodesic space.
\begin{itemize}
\item $X$ is \defbold{CAT(0)} if for any geodesic triangle and comparison triangle as above, one has $d_X(\gamma_1(s),\gamma_2(t)) \le d_{\Rb^2}(\gamma'_1(s),\gamma'_2(t))$.
\item $X$ is \defbold{$\delta$-hyperbolic} (for some parameter $\delta \in \Rb_{\ge 0}$) if every geodesic triangle is $\delta$-thin.
\item We say $X$ is an \defbold{NPC space} if either $X$ is complete and CAT(0), or else $X$ is proper and $\delta$-hyperbolic.
\end{itemize} 

We say a pair of boundary points $\xi_1,\xi_2$ are \defbold{opposite} one another (or \defbold{collinear}) if there is a geodesic line $\gamma$ with $\gamma(-\infty) = \xi_1$ and $\gamma(+\infty) = \xi_2$.  In a $\delta$-hyperbolic space $X$, any two distinct points of $\partial X$ are opposite.  This is not true in general for complete CAT(0) spaces: taking for example the space $\Rb^n$ for $n\ge 2$, one sees that $\partial \Rb^n$ is infinite but every point in $\partial \Rb^n$ has a unique opposite.   Nevertheless, in both cases all boundary points can be `seen' from any base point.

\begin{lem}\label{lem:visibility}\
\begin{enumerate}[(i)]
\item(\cite[Proposition II.8.2]{BH}) Let $X$ be a complete CAT(0) space, let $x \in X$ and let $\xi \in \partial X$.  Then there is a unique geodesic ray $\gamma$ such that $\gamma(0)=x$ and $\gamma(\infty) = \xi$.
\item(\cite[Chapitre 2 Proposition 2.1]{CDP}) Let $X$ be proper $\delta$-hyperbolic space, let $x \in X$ and let $\xi \in \partial X$.  Then there is a geodesic ray $\gamma$ such that $\gamma(0)=x$ and $\gamma(\infty) = \xi$.  Moreover, any two distinct points in $\partial X$ are opposite.
\end{enumerate}
\end{lem}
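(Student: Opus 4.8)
The plan is to treat the two cases separately; both are classical, and in each case the idea is to build the desired geodesic as a limit of approximating geodesic segments joining $x$ to points of a fixed ray representing $\xi$, so the argument amounts to recalling these standard constructions (or simply invoking the cited references).

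For (i), fix any ray $\rho$ representing $\xi$, say with $\rho(0) = x_0$, and for each $n$ let $\gamma_n \colon [0, d(x,\rho(n))] \to X$ be the unit-speed geodesic from $x$ to $\rho(n)$. The key tool is convexity of the metric in a complete CAT(0) space: comparing the triangle on $x$, $\rho(n)$, $\rho(n+k)$ with a Euclidean comparison triangle shows that for each fixed $t$ the sequence $(\gamma_n(t))_n$ is Cauchy, hence converges, and the convergence is uniform on compact subintervals; the limit $\gamma$ is then a geodesic ray with $\gamma(0) = x$. A further comparison-triangle estimate shows that $d(\gamma(t), \rho(t))$ stays bounded in $t$, so $\gamma(\infty) = \xi$. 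Uniqueness is immediate from convexity: if $\gamma,\gamma'$ are rays from $x$ with the same endpoint at infinity, then $t \mapsto d(\gamma(t),\gamma'(t))$ is convex, bounded, and vanishes at $0$, hence is identically zero.

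For (ii), properness replaces the convexity-and-completeness argument. Again fix a ray $\rho$ representing $\xi$ and let $\gamma_n$ be the geodesic from $x$ to $\rho(n)$; these are $1$-Lipschitz and on each bounded time interval take values in a fixed compact ball, so by the Arzel\`{a}--Ascoli theorem a subsequence converges uniformly on compacta (diagonalising over longer and longer intervals) to a geodesic ray $\gamma$ from $x$. Applying $\delta$-thinness to the triangle on $x$, $x_0$, $\rho(n)$ shows that $\gamma$ remains within bounded Hausdorff distance of $\rho$, so $\gamma(\infty) = \xi$. For the final assertion, given distinct $\xi_1,\xi_2 \in \partial X$ choose rays $\rho_1,\rho_2$ from a common basepoint $x$ representing them; since $\xi_1 \ne \xi_2$ the Gromov products $(\rho_1(n)\mid\rho_2(n))_x$ are bounded, and via $\delta$-hyperbolicity this keeps the segments $[\rho_1(n),\rho_2(n)]$ within a bounded distance $D$ of $x$. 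Reparametrising each so that its point nearest $x$ sits at parameter $0$, Arzel\`{a}--Ascoli (using properness) produces a geodesic line $\gamma$ passing through $B(x,D)$, and a last application of $\delta$-thinness identifies $\gamma(-\infty) = \xi_1$ and $\gamma(+\infty) = \xi_2$.

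The main obstacle is the bookkeeping in (ii): one has to reparametrise the approximating segments so that the Arzel\`{a}--Ascoli limit genuinely has the prescribed endpoints at infinity, and for the ``opposite'' statement one has to extract from $\xi_1 \ne \xi_2$ the uniform bound on where the segments $[\rho_1(n),\rho_2(n)]$ pass near $x$. Both are routine consequences of the standard $\delta$-hyperbolic inequalities, but they need a little care. Since the two statements are precisely \cite[Proposition II.8.2]{BH} and \cite[Chapitre 2, Proposition 2.1]{CDP}, the cleanest writeup is to cite these and retain only the sketches above for the reader's orientation.
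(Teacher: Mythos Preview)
Your proposal is correct, and indeed more detailed than what the paper does: the paper gives no proof at all for this lemma, treating it purely as a citation of \cite[Proposition II.8.2]{BH} and \cite[Chapitre 2, Proposition 2.1]{CDP}. Your sketches accurately summarize the standard arguments in those references, and your own recommendation at the end---to simply cite and keep the sketches for orientation---matches exactly what the paper does (minus the sketches).
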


If $X$ is a proper $\delta$-hyperbolic space, convergence at infinity is defined as follows.  We define the Gromov product with respect to some base point $e$,
\[
(y|z)_{e} = \frac{1}{2}(d(e,y)+d(e,z)-d(y,z));
\]
observe if we replace $e$ with some other base point $e'$, then $|(y|z)_e-(y|z)_{e'}| \le d(e,e')$.  Say that the sequence $(x_n)$ is \defbold{Cauchy--Gromov} if $(x_n|x_m)_e$ tends to $+\infty$ as $n,m \rightarrow +\infty$.  In particular, one can check that if $n \mapsto x_n$ is a quasi-isometric embedding of $\Nb$ into $X$, then $(x_n)$ is Cauchy--Gromov.  A pair of Cauchy--Gromov sequences $(x_n)$ and $(y_n)$ are defined to be equivalent if $(x_n|y_m)_e$ tends to $+\infty$ as $n,m \rightarrow +\infty$.  Write $\partial_s X$ for the set of equivalence classes; the next lemma allows us to identify $\partial_s X$ with $\partial X$ in a natural way.

\begin{lem}[{See \cite[Lemma~H.3.13]{BH}}]
If $X$ is a proper $\delta$-hyperbolic space then there is a unique bijection from $\partial_s X$ to $\partial X$, such that for each geodesic ray $\gamma: \Rb_{\ge 0} \rightarrow X$, the equivalence class of $(\gamma(n))$ is sent to $\gamma(\infty)$.
\end{lem}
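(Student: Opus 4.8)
Fix a base point $e \in X$. I would build the map explicitly by sending the Cauchy--Gromov class of $(\gamma(n))$ to $\gamma(\infty)$, then verify three things: that this respects equivalences in both directions (so it is well defined and injective on those classes containing a geodesic-ray sequence), that it is onto $\partial X$ (immediate from Lemma~\ref{lem:visibility}(ii)), and that \emph{every} Cauchy--Gromov sequence is equivalent to $(\gamma(n))$ for some geodesic ray $\gamma$ (so the map is defined on all of $\partial_s X$, and uniqueness is forced). Properness enters only in this last point.

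\textbf{Step 1: the two equivalences match up.} Each $(\gamma(n))$ is a quasi-isometric embedding of $\Nb$, hence Cauchy--Gromov as already noted. If $\gamma \sim \gamma'$ with $d(\gamma(t),\gamma'(t)) \le D$ for all $t$, then directly from the definition of the Gromov product one gets $(\gamma(n)\mid\gamma'(m))_e \ge \min\{n,m\} - c$ for a constant $c$ depending on $D$ and on $d(e,\gamma(0)), d(e,\gamma'(0))$; so $(\gamma(n))$ and $(\gamma'(n))$ are Cauchy--Gromov equivalent. Conversely, if $(\gamma(n)\mid\gamma'(m))_e \to +\infty$, then applying $\delta$-thinness to the (generalized) triangle on $e,\gamma(n),\gamma'(m)$ and letting $n,m \to +\infty$ shows each fixed point $\gamma(k)$ lies within a bounded distance of $\gamma'([0,+\infty))$, and symmetrically, so that $d(\gamma(k),\gamma'(k))$ is bounded and $\gamma \sim \gamma'$. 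Since $\gamma(\infty)$ is by definition the $\sim$-class of $\gamma$, the assignment $[(\gamma(n))] \mapsto \gamma(\infty)$ is thus a well-defined injection on the set of Cauchy--Gromov classes that contain a geodesic-ray sequence; and it is onto $\partial X$ because, by Lemma~\ref{lem:visibility}(ii), every $\xi \in \partial X$ is $\gamma(\infty)$ for some ray $\gamma$ based at $e$.

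\textbf{Step 2: every Cauchy--Gromov sequence comes from a ray.} Let $(x_n)$ be Cauchy--Gromov; since $(x_n\mid x_n)_e = d(e,x_n)$, we have $d(e,x_n) \to +\infty$. Choose for each $n$ a geodesic segment $\sigma_n \colon [0,d(e,x_n)] \to X$ from $e$ to $x_n$. These maps are $1$-Lipschitz and all pass through $e$, so on each $[0,T]$ they eventually take values in the compact ball $\overline{B}(e,T)$; by Arzel\`a--Ascoli and a diagonal argument over $T \in \Nb$, a subsequence converges uniformly on compact sets to a geodesic ray $\gamma$ with $\gamma(0)=e$. Using the $4\delta$-inequality for the Gromov product (a consequence of $\delta$-thinness) with middle point $x_m$, together with the identity $(x_m \mid \sigma_m(k))_e = k$ for $\sigma_m(k)$ the point of $\sigma_m$ at distance $k$ from $e$, and letting $m \to +\infty$ along the convergent subsequence so that $\sigma_m(k) \to \gamma(k)$, one obtains, for each fixed $n$ and $k$ and all sufficiently large $m$, an estimate $(x_n \mid \gamma(k))_e \ge \min\{(x_n\mid x_m)_e,\,k\} - O(\delta)$. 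Since $(x_n\mid x_m)_e \to +\infty$, this forces $(x_n \mid \gamma(m))_e \to +\infty$ as $n,m \to +\infty$; that is, $(x_n)$ is Cauchy--Gromov equivalent to $(\gamma(n))$. Hence every class of $\partial_s X$ contains a geodesic-ray sequence, so the map of Step 1 is defined on all of $\partial_s X$, and by Steps 1--2 it is a bijection onto $\partial X$ with the stated normalization. Finally, any bijection satisfying the normalization is determined on every class containing a geodesic-ray sequence, hence on all of $\partial_s X$; this gives uniqueness.

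\textbf{Main obstacle.} The essential work is Step 2: extracting the limiting ray via Arzel\`a--Ascoli is exactly where properness is used, and then one must track the additive $\delta$-errors carefully to conclude that this ray is Cauchy--Gromov equivalent to the original sequence. The underlying technical lemma common to Steps 1 and 2 is the $\delta$-hyperbolic comparison between $(x\mid y)_e$ and the length along which the geodesics $[e,x]$ and $[e,y]$ stay $2\delta$-close; everything else is bookkeeping.
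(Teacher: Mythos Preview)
The paper does not supply a proof of this lemma; it simply cites \cite[Lemma~H.3.13]{BH} and uses the result as a black box. So there is no argument in the paper to compare against.

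Your proposal is a correct outline of the standard proof (essentially the one in \cite{BH} or \cite{CDP}): match the two equivalence relations on ray sequences, then show every Cauchy--Gromov class contains a ray sequence via an Arzel\`a--Ascoli limit of segments $[e,x_n]$. The estimates in Step~2 are right in spirit; just be aware that to conclude $(x_n \mid \gamma(m))_e \to +\infty$ you implicitly use the hyperbolic inequality once more with $\gamma(k)$ as the middle point (to pass from a fixed $k$ to all $m \ge k$), which you did not write explicitly. In Step~1, the converse direction is slightly sketchier than it looks because $\gamma, \gamma'$ need not start at $e$, so the segments $[e,\gamma(n)]$ are not literally initial segments of $\gamma$; the cleanest fix is to first replace $\gamma,\gamma'$ by asymptotic rays based at $e$ (Lemma~\ref{lem:visibility}(ii)) or to invoke the thin-triangle/tripod comparison to see that $[e,\gamma(n)]$ and $\gamma|_{[0,n]}$ are uniformly Hausdorff-close. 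None of this is a genuine gap, only bookkeeping you should make explicit if you write it up in full.
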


In a proper $\delta$-hyperbolic space, quasi-geodesics stay close to geodesics.

\begin{lem}\label{lem:close_geodesic}
Let $X$ be a proper $\delta$-hyperbolic space, let $(x_n)_{n \in \Zb}$ be a bi-infinite sequence of points such that $n \mapsto x_n$ is quasi-geodesic and let $\gamma$ be a geodesic ray such that $(x_n)_{n \ge 0} \sim (\gamma(n))_{n \ge 0}$.  Then $(x_n)_{n \ge 0}$ stays within a bounded distance of the image of $\gamma$.
\end{lem}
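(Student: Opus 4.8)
The plan is to deduce this from the standard stability of quasi-geodesics (Morse lemma) in $\delta$-hyperbolic spaces, to pass from segments to rays using properness, and then to compare asymptotic geodesic rays. First I fix $(\lambda,c)$ with $\tfrac{1}{\lambda}|n-m|-c \le d(x_n,x_m) \le \lambda|n-m|+c$ for all $n,m$; restricting to $n,m \ge 0$ and joining each $x_n$ to $x_{n+1}$ by a geodesic segment (note $d(x_n,x_{n+1}) \le \lambda+c$) produces a continuous $(\lambda',c')$-quasi-geodesic ray $\sigma:\Rb_{\ge 0} \to X$ with $\sigma(n)=x_n$, where $\lambda',c'$ depend only on $\lambda,c$. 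I read the hypothesis $(x_n)_{n\ge 0} \sim (\gamma(n))_{n \ge 0}$ as asserting that these are equivalent Cauchy--Gromov sequences, i.e.\ that $\sigma$ and $\gamma$ determine the same boundary point $\xi := \gamma(\infty)$; under the stronger reading $\sup_n d(x_n,\gamma(n)) < \infty$ the conclusion is immediate, so this is the case of interest. Note only the forward half $(x_n)_{n \ge 0}$ enters the argument.

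The core step is to show that $(x_n)_{n\ge 0}$ stays within a bounded distance of some geodesic ray $\rho$ with $\rho(0)=x_0$ and $\rho(\infty)=\xi$. For each $m \ge 1$ choose a geodesic segment $c_m := [x_0,x_m]$. By the Morse lemma (\cite[Theorem~III.H.1.7]{BH}), the Hausdorff distance between $\sigma([0,m])$ and $c_m$ is at most some $R = R(\delta,\lambda',c')$ independent of $m$; in particular every $x_n$ with $0 \le n \le m$ lies within $R$ of $c_m$. Since $X$ is proper, the $c_m$ have a subsequence converging uniformly on compact sets to a geodesic ray $\rho$ with $\rho(0)=x_0$. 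Fixing $n$ and $\varepsilon>0$ and picking $m$ in this subsequence with $m \ge n$ and $c_m$ within $\varepsilon$ of $\rho$ on $[0,d(x_0,x_n)+R+1]$, if $p_m = c_m(t_m)$ satisfies $d(x_n,p_m)\le R$ then $t_m = d(x_0,p_m) \le d(x_0,x_n)+R$, so $d(x_n,\rho(t_m)) \le R+\varepsilon$; letting $\varepsilon \to 0$ gives $d(x_n,\mathrm{im}(\rho)) \le R$ for every $n \ge 0$. Since $\sigma$ is a quasi-geodesic ray contained in the $R$-neighbourhood of $\mathrm{im}(\rho)$, a short Gromov-product argument shows $\rho(\infty)=\sigma(\infty)=\xi$.

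Finally, $\rho$ and $\gamma$ are geodesic rays of the $\delta$-hyperbolic space $X$ with $\rho(\infty)=\gamma(\infty)$, hence asymptotic; by a standard fact, asymptotic geodesic rays in a $\delta$-hyperbolic space stay within a bounded Hausdorff distance $R'$ depending on $\delta$ and $d(\rho(0),\gamma(0))$ (see \cite[Chapitre 3]{CDP}). Combining, $d(x_n,\mathrm{im}(\gamma)) \le R+R'$ for all $n \ge 0$, as required. The main obstacle is not any single deep input but the passage from the segment version of the Morse lemma to a statement about a ray --- handled by the properness-based limiting argument above --- together with care that the final bound on $d(x_n,\mathrm{im}(\gamma))$ is genuinely uniform in $n$; if one is willing to cite the ray version of the stability theorem directly, the middle paragraph collapses to a single sentence.
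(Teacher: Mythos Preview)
Your proof is correct, but it takes a different route from the paper's. The paper exploits the bi-infinite hypothesis: since $n \mapsto x_n$ is a bi-infinite quasi-geodesic, it has two distinct boundary points, so visibility (Lemma~\ref{lem:visibility}(ii)) produces a geodesic \emph{line} $\eta$ joining them; the line version of the Morse lemma (\cite[Chapitre~3 Th\'{e}or\`{e}me~3.1]{CDP}) then keeps the whole sequence within $\kappa$ of $\eta$, and finally the forward ray $\eta_+$ is compared with $\gamma$ via \cite[Lemma~H.3.3]{BH}. Your argument, by contrast, ignores the negative half entirely and manufactures the comparison ray $\rho$ from scratch by taking an Arzel\`a--Ascoli limit of the segments $[x_0,x_m]$, applying the segment Morse lemma along the way. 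This costs you the extra limiting paragraph, but it buys you a genuinely stronger statement: nothing in your proof uses the bi-infinite hypothesis, so you have actually shown the conclusion for any one-sided quasi-geodesic sequence. The paper's route is shorter precisely because it spends the bi-infinite hypothesis to avoid that construction.
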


\begin{proof}
By Lemma~\ref{lem:visibility}(ii) there is a geodesic line $\eta$ such that $(x_n) \rightarrow \eta(\pm\infty)$ as $n \rightarrow \pm \infty$.  By \cite[Chapitre~3 Th\'{e}or\`{e}me 3.1]{CDP}, the sequence $(x_n)$ stays within some distance $\kappa$ of $\eta$; given the orientation of $\eta$ relative to $(x_n)$, in fact for $n>0$ sufficiently large, we have $d(x_n,\eta(s_n))\le \kappa$ where $s_n \ge 0$.  Meanwhile, by \cite[Lemma~H.3.3]{BH}, the geodesic ray $\eta_+$ eventually stays within a distance $5\delta$ of $\gamma$.  Thus for $n \gg 0$ there is $t_n \in \Rb_{\ge 0}$ such that $d(x_n,\gamma(t_n)) \le \kappa + 5\delta$.
\end{proof}

Both complete CAT(0) spaces and hyperbolic spaces have restrictions on the thickness of generalized triangles.

\begin{lem}\label{lem:decreasing_distance}
Let $X$ be a complete CAT(0) space and let $\gamma_1$ and $\gamma_2$ be geodesic rays.  Then $\gamma_1 \sim \gamma_2$ if and only if
\[
\forall t \ge 0: d(\gamma_1(t),\gamma_2(t)) \le d(\gamma_1(0),\gamma_2(0)).
\]
\end{lem}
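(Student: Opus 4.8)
The idea is to use the convexity of the distance function between two geodesics in a CAT(0) space. The implication ``$\Leftarrow$'' is trivial: the displayed condition asserts that $t \mapsto d(\gamma_1(t),\gamma_2(t))$ is bounded (by the constant $d(\gamma_1(0),\gamma_2(0))$), which is exactly the definition of $\gamma_1 \sim \gamma_2$.

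For ``$\Rightarrow$'', set $f(t) = d(\gamma_1(t),\gamma_2(t))$ for $t \ge 0$. First I would record that $f$ is convex: for any $L > 0$, restricting $\gamma_1,\gamma_2$ to $[0,L]$ and reparametrizing affinely by $[0,1]$, the convexity of the metric in CAT(0) spaces (\cite[Proposition~II.2.2]{BH}) gives that $s \mapsto d(\gamma_1(sL),\gamma_2(sL))$ is convex on $[0,1]$, hence $f$ is convex on $[0,L]$; since $L$ is arbitrary, $f$ is convex on $[0,\infty)$. Next, the hypothesis $\gamma_1 \sim \gamma_2$ says precisely that $f$ is bounded on $[0,\infty)$.

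The remaining step is the elementary fact that a bounded convex function on $[0,\infty)$ is non-increasing: if $f(a) < f(b)$ for some $0 \le a < b$, then convexity forces $f(t) \ge f(b) + \tfrac{f(b)-f(a)}{b-a}(t-b)$ for all $t \ge b$, contradicting boundedness. Hence $f$ is non-increasing, and in particular $f(t) \le f(0) = d(\gamma_1(0),\gamma_2(0))$ for every $t \ge 0$.

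There is no real obstacle here; the only points requiring mild care are to invoke the convexity of the metric in the correct form (geodesics parametrized proportionally to arc length on a common interval) and to observe that convexity on every bounded subinterval yields convexity on the whole half-line. Note that completeness of $X$ is not actually needed for this lemma, though it is harmless to assume it in line with the standing hypotheses.
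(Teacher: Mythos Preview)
Your proof is correct and follows essentially the same approach as the paper: both invoke the convexity of $t\mapsto d(\gamma_1(t),\gamma_2(t))$ via \cite[Proposition~II.2.2]{BH} and then use that a bounded convex function on $[0,\infty)$ is non-increasing. You have simply spelled out in more detail the reparametrization needed to apply the cited proposition and the elementary argument for ``bounded convex $\Rightarrow$ non-increasing'', which the paper leaves implicit.
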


\begin{proof}
If $\gamma_1$ and $\gamma_2$ are not asymptotic, then clearly there is some $t > 0$ such that $d(\gamma_1(t),\gamma_2(t)) > d(\gamma_1(0),\gamma_2(0))$.  On the other hand if $\gamma_1$ and $\gamma_2$ are asymptotic, then $f(t) = d(\gamma_1(t),\gamma_2(t))$ is a bounded continuous function from $[0,\infty)$ to $[0,\infty)$ that is also convex; see \cite[Proposition II.2.2]{BH}.  In particular, $f$ must be weakly decreasing, as desired.
\end{proof}

\begin{lem}\label{lem:thin_omega_triangle}
Let $X$ be a proper $\delta$-hyperbolic space.  Then generalized geodesic triangles in $X$ are $24\delta$-thin.  Moreover, given two geodesics $\gamma_1$ and $\gamma_2$ with common limit $\xi$, then
\[
d(\gamma_1(t),\gamma_2(t)) \le 48\delta+r
\]
for all $t \ge 24\delta+r$, where $r = d(\gamma_1(0),\gamma_2(0))$.
\end{lem}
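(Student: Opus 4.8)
The plan is to reduce the generalized ($\delta$-thin) triangle statement to the ordinary triangle case by a limiting/exhaustion argument, and then to derive the displayed distance bound from thinness of a suitable generalized triangle.

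First, for the thinness claim: recall that in a proper $\delta$-hyperbolic space every *geodesic* triangle is $\delta$-thin, and moreover (by the classical fan/interior-point comparison, or by \cite[Chapitre~1]{CDP}) one can pass to a slightly larger constant and control ideal vertices. Concretely, given a generalized triangle with one or more vertices at infinity, pick geodesic rays or lines representing the ideal sides, choose sequences of finite points $p_n \to \xi$ along these rays marching out to infinity, and form the finite comparison triangles with vertices at the $p_n$ (or at the finite corners). Each finite triangle is $\delta$-thin. Since $X$ is proper, for any fixed point $x$ on a side of the generalized triangle, $x$ lies on the corresponding side of the $n$-th finite triangle once $n$ is large, and the point $y_n$ on another side with $d(x,y_n) \le \delta$ lies in a compact ball; extracting a convergent subsequence $y_n \to y$ gives a point $y$ on a different side of the generalized triangle with $d(x,y) \le \delta$. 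The reason one must inflate $\delta$ to $24\delta$ rather than keep $\delta$ is that the sides of the finite approximating triangles are only *within bounded Hausdorff distance* of the ideal sides of the generalized triangle (geodesics between $p_n$ and $q_n$ are close to, but not equal to, the rays representing $\xi$); tracking these error terms — using \cite[Lemma~H.3.3]{BH} that two rays to a common point eventually $5\delta$-fellow-travel, and the stability of geodesics — accumulates a universal multiple of $\delta$. I would quote the bound $24\delta$ from a standard reference (e.g.\ \cite[Chapitre~2]{CDP} or Bridson--Haefliger) rather than optimize it.

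For the ``moreover'' clause, let $\gamma_1,\gamma_2$ be geodesic rays with $\gamma_i(\infty) = \xi$ and $r = d(\gamma_1(0),\gamma_2(0))$. Consider the generalized triangle with vertices $\gamma_1(0)$, $\gamma_2(0)$, and $\xi$, whose three sides are $\gamma_1$, $\gamma_2$, and the geodesic segment $[\gamma_1(0),\gamma_2(0)]$ of length $r$. By the first part this triangle is $24\delta$-thin. Fix $t \ge 24\delta + r$ and apply thinness to the point $\gamma_1(t)$: there is a point $y$ on one of the other two sides with $d(\gamma_1(t),y) \le 24\delta$. If $y$ lies on the short side $[\gamma_1(0),\gamma_2(0)]$, then $d(\gamma_1(0),\gamma_1(t)) = t \le d(\gamma_1(0),y) + d(y,\gamma_1(t)) \le r + 24\delta$, contradicting $t \ge 24\delta + r$ unless $t = 24\delta + r$, in which case $y$ is within $r + 24\delta$ of $\gamma_2(0)$ anyway; either way $d(\gamma_1(t),\gamma_2(\text{near }0))$ is already small. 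So we may assume $y = \gamma_2(s)$ for some $s \ge 0$. Then $|s - t| = |d(\gamma_2(0),\gamma_2(s)) - d(\gamma_1(0),\gamma_1(t))| \le d(\gamma_1(0),\gamma_2(0)) + d(\gamma_1(t),\gamma_2(s)) \le r + 24\delta$, and hence $d(\gamma_2(s),\gamma_2(t)) = |s-t| \le r + 24\delta$. Combining, $d(\gamma_1(t),\gamma_2(t)) \le d(\gamma_1(t),\gamma_2(s)) + d(\gamma_2(s),\gamma_2(t)) \le 24\delta + (r + 24\delta) = 48\delta + r$, as claimed. (One should double-check the degenerate case where $y$ lands on the short side; handling it cleanly may require taking $t$ strictly larger, or absorbing it into the stated inequality, which still holds since $48\delta + r \ge r + 24\delta$.)

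The main obstacle is the first part: passing rigorously from finite triangles to generalized ones while keeping track of why the thinness constant degrades from $\delta$ to a fixed multiple. The cleanest route is not to prove this from scratch but to cite the standard fact (generalized triangles in a $\delta$-hyperbolic space are $C\delta$-thin for a universal $C$, e.g.\ $C = 24$ in the convention matching \cite{CDP}), and then spend the real effort only on the elementary triangle-inequality bookkeeping in the ``moreover'' clause, where the constant $48\delta + r$ must come out exactly as stated.
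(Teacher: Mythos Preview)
Your proposal is correct and follows essentially the same route as the paper: the paper simply cites \cite[Chapitre~2, Proposition~2.2]{CDP} for the $24\delta$-thinness of generalized triangles, and for the second clause applies that thinness to the triangle with corners $\gamma_1(0),\gamma_2(0),\xi$, obtains a point $\gamma_2(u)$ with $d(\gamma_1(t),\gamma_2(u))\le 24\delta$, bounds $|u-t|\le 24\delta+r$ by the triangle inequality, and concludes. Your handling of the degenerate case (the near point landing on the short side) is actually more explicit than the paper's, which silently passes to $t>24\delta+r$.
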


\begin{proof}
For the assertion about generalized geodesic triangles, see \cite[Chapitre 2 Proposition 2.2]{CDP}.  Now let $\gamma_1$ and $\gamma_2$ be geodesics converging to the boundary point $\xi$, and let $x_1 = \gamma_1(0)$, $x_2 = \gamma_2(0)$ and $r = d(x_1,x_2)$.  Then the triangle formed by $x_1$, $x_2$ and $\xi$ is $24\delta$-thin, so for $t_1 > 24\delta+r$, there exists $u \ge 0$ such that $d(\gamma_1(t),\gamma_2(u)) \le 24\delta$.  By the triangle inequality, $u$ cannot differ from $t$ by more than $24\delta+r$, so $d(\gamma_1(t),\gamma_2(t)) \le 48\delta + r$.
\end{proof}

\subsection{Types of isometries}\label{sec:types}

Given a metric space $X$ and a subset $Y$, we will say an isometry $g$ \defbold{stabilizes} $Y$ if $\{gy \mid y \in Y\} = Y$, and $g$ \defbold{fixes} $Y$ if $\forall y \in Y: gy=y$.  We define the \defbold{fixator} $\Fix_G(Y)$ and \defbold{stabilizer} $G_Y$ of $Y$ analogously; however, 
\[
G_{x_1,\dots,x_n} := \bigcap^n_{i=1}G_{x_i}= \Fix_G(\{x_1,\dots,x_n\}).
\]
Given an isometry or set of isometries $H$, we write $X^H$ for the set of fixed points of $H$, in other words the largest subspace of $X$ fixed by $H$.

Let $X$ be a metric space and let $g \in \Isom(X)$.  We define the \defbold{translation length} $|g|$ of $g$ to be 
\[
|g| := \liminf_{n \rightarrow +\infty} \frac{d(g^nx,x)}{n} \quad (x \in X).
\]
It is easy to see that $|g|$ does not depend on the choice of $x$; using the triangle inequality, one sees that $|g| \le d(gx,x)$ for all $x \in X$.  We then define $\Min(g) := \{ x \in X \mid d(gx,x) = |g|\}$; notice that $|g| = |g\inv|$ and $\Min(g) = \Min(g\inv)$.

A group $G$ of isometries of a metric space $X$ is called \defbold{bounded} if for all (equivalently, some) $x \in X$, the orbit $Gx = \{gx \mid g \in G\}$ remains within a bounded distance of $x$.  We say $g \in \Isom(X)$ is bounded if $\grp{g}$ is bounded.  In particular, every bounded isometry $g$ has zero translation length.

A \defbold{QI-hyperbolic} isometry is an isometry $g$ of a metric space $X$ such that we have a quasi-isometric embedding of $\Zb$ into $X$ via $n \mapsto g^nx$ for some (equivalently, any) $x \in X$.  In particular, note that every QI-hyperbolic isometry has strictly positive translation length.

In a geodesic space, a \defbold{labelled axis} of $g \in \Isom(X)$ is a geodesic $\gamma: \Rb \rightarrow X$ for which there exists $k > 0$ such that $g\gamma(t) = \gamma(t+k)$ for all $t \in \Rb$.  We see that in fact $k = |g|$, since $\frac{d(g^nx,x)}{n} = k$ for every $n > 0$ and every $x$ in the image of $\gamma$.  Thus the image of $\gamma$, called an \defbold{axis} of $g$, is contained in $\Min(g)$.  An isometry admitting an axis is called \defbold{axial}; clearly it is then also QI-hyperbolic.  

Proper $\delta$-hyperbolic spaces can admit isometries that are QI-hyperbolic but not axial.  However, given Lemma~\ref{lem:close_geodesic}, we see that if $X$ is a proper $\delta$-hyperbolic space and $g$ is a QI-hyperbolic isometry, then there is a geodesic line $\gamma$ from $\xi_-(g)$ to $\xi_+(g)$, and then any such $\gamma$ is a `quasi-axis', in that orbits of $g$ stay within a bounded distance of $\gamma(\Rb)$.

Let $\lambda = \inf\{d(gx,x) \mid x \in X\}$.  If $x \in X$ is such that $d(gx,x) = \lambda > 0$, we can construct an axis as follows: Take a geodesic $\sigma: [0,\lambda]$ with $\sigma(0) = x$ and $\sigma(\lambda) = gx$.  We then define $\gamma: \Rb \rightarrow X$ by setting, for $n \in \Zb$ and $0 \le r < 1$,
\[
\gamma(\lambda(n+r)) = g^n\sigma(\lambda r).
\]
In other words, we glue together the $\grp{g}$-translates of $\sigma$.  We see that $\lambda = |g|$ and $\gamma(\Rb)$ is contained in $\Min(g)$; the fact that $d(gx,x) \ge \lambda$ for all $x \in X$ ensures that $\gamma$ is a geodesic.  We thus arrive at the following description of $\Min(g)$.

\begin{lem}\label{lem:min_set}
Let $X$ be a geodesic space and let $g \in \Isom(X)$.
\begin{enumerate}[(i)]
\item If $|g| = 0$ then $\Min(g) = X^g$.
\item If $|g| > 0$ then $\Min(g)$ is the union of all axes of $g$.
\item The set $\Min(g)$ is empty if and only if the infimum $\inf\{d(gx,x)\mid x\in X\}$ is not realized by any $x \in X$.
\end{enumerate}
\end{lem}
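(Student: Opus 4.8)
My plan is to reduce everything to the definition $\Min(g)=\{x\in X: d(gx,x)=|g|\}$ together with two facts already recorded immediately before the statement: first, that $|g|\le d(gy,y)$ for every $y\in X$, so that $|g|\le\lambda:=\inf_y d(gy,y)$; and second, that whenever a point $x$ realizes $d(gx,x)=\lambda>0$, gluing the $\grp{g}$-translates of a geodesic from $x$ to $gx$ yields a labelled axis $\gamma$ with $\gamma(0)=x$ and translation constant $\lambda$, so that $\lambda=|g|$ (as holds for any labelled axis) and $\gamma(\Rb)\subseteq\Min(g)$. With these in hand, each clause is short bookkeeping; the only point needing attention is to keep track of the identification $\lambda=|g|$ when the infimum is attained.

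For (i): if $|g|=0$ then directly $\Min(g)=\{x:d(gx,x)=0\}=\{x:gx=x\}=X^g$. For (ii), I would assume $|g|>0$. The inclusion $\supseteq$ is immediate, since every axis is the image of a labelled axis and such an image was already observed to lie in $\Min(g)$. For $\subseteq$, take $x\in\Min(g)$, so $d(gx,x)=|g|=:\lambda>0$; then $\lambda\le\inf_y d(gy,y)\le d(gx,x)=\lambda$, so $x$ realizes the infimum, and the gluing construction supplies an axis through $x$, putting $x$ in the union of axes.

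For (iii), the implication ``$\Min(g)\ne\emptyset\Rightarrow$ the infimum is realized'' is the previous computation read in reverse: any $x\in\Min(g)$ satisfies $d(gx,x)=|g|\le\inf_y d(gy,y)\le d(gx,x)$, so the infimum equals $|g|$ and is attained at $x$. For the converse, suppose $\lambda:=\inf_y d(gy,y)$ is attained at some $x$. If $\lambda=0$ then $gx=x$, hence $g^nx=x$ for all $n$, so $|g|=0$ and by part (i) $x\in X^g=\Min(g)$. If $\lambda>0$, the gluing construction produces an axis, which is nonempty and contained in $\Min(g)$ by part (ii). Either way $\Min(g)\ne\emptyset$.

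I do not expect a genuine obstacle inside the proof of the lemma itself: the one substantive point --- that the glued broken line is an honest geodesic and not merely a local geodesic --- has already been settled in the construction preceding the statement, where it is deduced from $d(gx,x)\ge\lambda$ holding for all $x$. The remaining work is purely to assemble the pieces, the only subtlety being the consistent use of $\lambda=|g|$ whenever the translation infimum is realized.
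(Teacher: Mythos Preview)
Your proposal is correct and matches the paper's approach: the paper does not write a separate proof for this lemma but rather presents it as the summary of the construction paragraph immediately preceding it, and you have faithfully supplied the bookkeeping that turns that paragraph into a proof of each clause. In particular, your use of the chain $|g|\le\lambda\le d(gx,x)=|g|$ to identify $\lambda$ with $|g|$ whenever $\Min(g)$ is nonempty, and your appeal to the already-established gluing construction for the $\subseteq$ direction of (ii) and the harder direction of (iii), are exactly what the paper has in mind.
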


For complete CAT(0) spaces, we note the following basic property of bounded sets.

\begin{lem}[{See \cite[Proposition~II.2.7 and Corollary~II.2.8]{BH}}]\label{lem:centre}
Let $X$ be a complete CAT(0) space and let $Y$ be a bounded subset of $X$.  Then there is a unique point $c = c_Y \in X$, called the \defbold{centre} of $Y$, that achieves the minimum value of $f_Y(c) := \sup \{d(c,y) \mid y \in Y\}$.  In particular, if $G \le \Isom(X)$ stabilizes $Y$, then $G$ fixes $c_Y$.
\end{lem}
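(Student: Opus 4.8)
The plan is to run the classical circumcentre argument. The key analytic input is the semiparallelogram (CN) inequality, which I would first extract from the comparison‑triangle form of the CAT(0) condition: given $c,c',y \in X$, form the geodesic triangle with vertices $c,c',y$ and a comparison triangle $\bar c,\bar c',\bar y$ in $\Rb^2$ with the same side lengths. Let $m$ be the midpoint of the geodesic $[c,c']$ and $\bar m$ the midpoint of $[\bar c,\bar c']$; these are corresponding points, so the CAT(0) inequality gives $d(m,y) \le d_{\Rb^2}(\bar m,\bar y)$, while the exact parallelogram law in $\Rb^2$ reads $d_{\Rb^2}(\bar m,\bar y)^2 = \tfrac{1}{2}d_{\Rb^2}(\bar c,\bar y)^2 + \tfrac{1}{2}d_{\Rb^2}(\bar c',\bar y)^2 - \tfrac{1}{4}d_{\Rb^2}(\bar c,\bar c')^2$. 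Since the comparison side lengths equal the original ones, this yields
\[
d(m,y)^2 \le \tfrac{1}{2}d(c,y)^2 + \tfrac{1}{2}d(c',y)^2 - \tfrac{1}{4}d(c,c')^2 .
\]

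Next I would establish existence. As $Y$ is bounded, $f_Y(c) < \infty$ for every $c \in X$, so $r := \inf_{c \in X} f_Y(c)$ is a finite nonnegative real. Pick a minimizing sequence $(c_n)$ with $f_Y(c_n) \to r$, and let $m_{n,m}$ be the midpoint of $[c_n,c_m]$. Applying the CN inequality to $c_n, c_m, y$ and taking the supremum over $y \in Y$ gives
\[
r^2 \le f_Y(m_{n,m})^2 \le \tfrac{1}{2}f_Y(c_n)^2 + \tfrac{1}{2}f_Y(c_m)^2 - \tfrac{1}{4}d(c_n,c_m)^2 ,
\]
hence $d(c_n,c_m)^2 \le 2f_Y(c_n)^2 + 2f_Y(c_m)^2 - 4r^2 \to 0$ as $n,m \to \infty$. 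Thus $(c_n)$ is Cauchy and, by completeness of $X$, converges to some $c_Y \in X$. Since $|f_Y(c)-f_Y(c')| \le d(c,c')$, the function $f_Y$ is continuous, so $f_Y(c_Y) = r$, i.e.\ $c_Y$ achieves the minimum. For uniqueness, if $c$ and $c'$ both satisfy $f_Y(\cdot) = r$, apply the CN inequality to their midpoint $m$ and take the sup over $y \in Y$: this gives $r^2 \le f_Y(m)^2 \le r^2 - \tfrac{1}{4}d(c,c')^2$, forcing $d(c,c') = 0$.

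Finally, for the equivariance statement: if $g \in \Isom(X)$ stabilizes $Y$, then $g^{-1}Y = Y$, so $f_Y(gc_Y) = \sup_{y \in Y} d(gc_Y,y) = \sup_{y \in Y} d(c_Y, g^{-1}y) = \sup_{y' \in Y} d(c_Y,y') = f_Y(c_Y) = r$; thus $gc_Y$ is also a minimiser, and by uniqueness $gc_Y = c_Y$. Applying this to each $g \in G$ shows $G$ fixes $c_Y$. The only genuinely delicate point is the first step — checking that the comparison‑triangle inequality really does deliver the quadratic CN inequality — but this is a short and standard computation in the Euclidean plane; everything downstream is soft.
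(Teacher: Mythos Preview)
Your argument is correct: this is the standard Bruhat--Tits circumcentre proof via the CN (semiparallelogram) inequality, and each step is sound. The paper does not supply its own proof of this lemma but simply cites \cite[Proposition~II.2.7 and Corollary~II.2.8]{BH}, where essentially the same argument as yours is carried out; so there is no substantive difference in approach to compare.
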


An isometry $g$ of a complete CAT(0) space $X$ is called \defbold{parabolic} if $\Min(g)$ is empty.  Thus every isometry of $X$ either fixes a point, translates along an axis, or is parabolic, and the three cases are mutually exclusive. 

We say a boundary point $\xi$ is an \defbold{attracting point} for the isometry $g$, and write $\xi_+(g) = \xi$, if for all $x \in X$, the sequence $(g^nx)_{n \ge 0}$ converges to $\xi$ (for the notion of convergence appropriate to the space).  Note that the attracting point is unique if it exists.  If $\xi_+(g\inv)$ exists we also call it the \defbold{repelling point} of $g$ and define $\xi_-(g):= \xi_+(g\inv)$.  If $g$ is bounded, then clearly it has no attracting or repelling point.  On the other hand, if $g$ is QI-hyperbolic, then it is clear that both $\xi_+(g)$ and $\xi_-(g)$ exist and they are distinct.

If $X$ is a $\delta$-hyperbolic geodesic space, conventionally isometries are divided into three classes: those that are bounded, those that are QI-hyperbolic, and \defbold{parabolic} isometries, which are neither bounded nor QI-hyperbolic.  Parabolic isometries of $\delta$-hyperbolic spaces can equivalently be characterized as those unbounded isometries $g$ such that $(g^nx)_{n \ge 0}$ and $(g^{-n}x)_{n \ge 0}$ converge to the same point on the boundary: see \cite[Chapitre 9]{CDP}.

We will refer to axial isometries of complete CAT(0) spaces and QI-hyperbolic isometries of $\delta$-hyperbolic spaces collectively as \defbold{hyperbolic}.  To summarize, we have taken definitions such that every isometry of an NPC space $X$ is exactly one of: bounded, hyperbolic or parabolic.

\section{The stabilizers of the limit points of a hyperbolic isometry}

For this rest of the article we will generally assume that $X$ is an NPC space and that $G$ is a locally compact group that acts properly and continuously by isometries of $X$.  Let us note first that in this context, bounded elements are trivial from the perspective of scale theory.

\begin{lem}\label{lem:elliptic_unimodular}
Let $X$ be a metric space, let $G$ be a locally compact group acting continuously and properly by isometries and let $g \in G$ have bounded action on $X$.  Then $\para_G(g) = G$ and $\Delta_G(g)=1$.  If $G$ is totally disconnected, then $s_G(g)=s_G(g\inv)=1$.
\end{lem}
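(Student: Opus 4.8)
The plan is to show first that $\overline{\grp{g}}$ is a compact subgroup of $G$, and then to read off all three assertions from this by soft arguments. So the main step is:
\begin{center}
$\overline{\grp{g}}$ is compact.
\end{center}
Fix $x \in X$; since $g$ has bounded action, the $\grp{g}$-orbit $\grp{g}x$ is a bounded subset of $X$. If $X$ is proper (in particular in the $\delta$-hyperbolic case), then $\overline{\grp{g}x}$ is compact, so by properness of the action the set $\{h \in G \mid h\,\overline{\grp{g}x} \cap \overline{\grp{g}x} \neq \emptyset\}$ is a compact subset of $G$; it contains $g^n$ for every $n$, hence contains $\overline{\grp{g}}$. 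If instead $X$ is complete CAT(0), apply Lemma~\ref{lem:centre} to the bounded set $\grp{g}x$: its centre $c$ is fixed by every isometry stabilising $\grp{g}x$, in particular by $g$, so $\grp{g} \le G_c$; and $G_c$ is compact because the action is proper, so again $\overline{\grp{g}} \le G_c$ is compact. (This is the step in which the geometric hypotheses are used: a bounded isometry of an arbitrary metric space need not lie in a compact subgroup.)

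Write $K := \overline{\grp{g}}$. For $\para_G(g) = G$: given $h \in G$, the set $\{g^n h g^{-n} \mid n \ge 0\}$ is contained in $\{khk\inv \mid k \in K\}$, the continuous image of the compact set $K$, hence has compact closure; so $h \in \para_G(g)$. Since $g\inv$ also has bounded action, the same gives $\para_G(g) = \para_G(g\inv) = G$. For $\Delta_G(g) = 1$: the modular homomorphism restricts to a continuous homomorphism $K \to \Rb_{>0}$ whose image is a compact subgroup of $\Rb_{>0}$, hence trivial, and $g \in K$.

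Finally suppose $G$ is totally disconnected, and pick a compact open subgroup $V$ of $G$ (van Dantzig). Since $K$ is compact and $V$ open, the subgroup $U := \bigcap_{k\in K}kVk\inv = \bigcap_{n\in\Zb}g^nVg^{-n}$ is again open (a tube-lemma argument), and it is compact and normalised by $g$; hence $s_G(g) = s_G(g\inv) = 1$. (Alternatively, since $\para_G(g)$ and $\para_G(g\inv)$ are both open, this follows from Lemma~\ref{lem:BW_scale_one}.)

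I expect the only real content to be the compactness of $\overline{\grp{g}}$, i.e.\ the passage from ``bounded action'' to ``contained in a compact subgroup''; everything after that is formal. A secondary point to state carefully is which formulation of proper action is in force, since that is exactly what delivers compactness of point stabilisers and of sets of the form $\{h \mid hC \cap C \neq \emptyset\}$ with $C$ compact.
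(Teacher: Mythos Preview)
Your argument is correct (for NPC spaces) but takes a genuinely different route from the paper. The paper never establishes that $\overline{\grp{g}}$ is compact; instead it uses the elementary estimate
\[
d(g^nhg^{-n}x,x) = d(hg^{-n}x,g^{-n}x) \le d(hx,x) + 2d(g^{-n}x,x) \le r+2c
\]
for $h$ in $O_r := \{h \in G : d(hx,x) \le r\}$, giving $g^nO_rg^{-n} \subseteq O_{r+2c}$ for all $n$. This immediately yields $O_r \subseteq \para_G(g)$ (hence $\para_G(g)=G$, since $G = \bigcup_r O_r$) and bounds $\mu(g^nO_rg^{-n})$ uniformly, forcing $\Delta_G(g)=1$. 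The totally disconnected case is then deduced exactly as in your alternative, via Lemma~\ref{lem:BW_scale_one}.

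One point to flag: the lemma is stated for an arbitrary metric space, not just an NPC space, and the paper's proof works at that generality. Your proof as written invokes Lemma~\ref{lem:centre} (CAT(0)) or properness of $X$ ($\delta$-hyperbolic) to get compactness of $\overline{\grp{g}}$, so it only covers the NPC case. In fact this detour is unnecessary: the paper takes ``proper action'' in the sense that each $O_r$ is compact (this is how it is used throughout, e.g.\ in the proof of Theorem~\ref{thm:para_stable}), and under that convention $\grp{g} \subseteq O_c$ already gives $\overline{\grp{g}}$ compact with no geometric input. So your remark that ``this is the step in which the geometric hypotheses are used'' is the one place where the argument overshoots; once the definition of proper is fixed, your approach also works for arbitrary metric spaces and is arguably cleaner, since the conclusions for $\para_G$, $\Delta_G$ and $s_G$ all drop out of the single fact that $g$ lies in a compact subgroup.
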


\begin{proof}
Let $\mu$ be a right-invariant Haar measure on $G$.  Let $x \in X$, and for $r > 0$ let $O_r$ be the set of all $h \in G$ such that $d(hx,x) \le r$.  Then $O_r$ is a compact identity neighbourhood, so $0 < \mu(O_r) < \infty$.  Since $g$ is bounded, there is some $c$ such that $d(g^nx,x) \le c$ for all $n \in \Zb$; it follows that for all $h \in O_r$ and $n \in \Zb$, we have
\[
d(g^nhg^{-n}x,x) = d(hg^{-n}x,g^{-n}x) \le d(hx,x) + 2d(g^{-n}x,x) \le r+2c.
\]
In particular, we have a bound on $\mu(g^nO_rg^{-n})$ over all $n \in \Zb$, namely $\mu(g^nO_rg^{-n}) \le \mu(O_{r+2c})$.  Thus $\Delta_G(g)=1$.  We also see that $O_r \subseteq \para_G(g)$; since $r$ can be made arbitrarily large, we deduce that $\para_G(g) = G$.  Similarly, $\para_G(g\inv)=G$.

If $G$ is totally disconnected, since $\para_G(g)$ and $\para_G(g\inv)$ are both open, we deduce from Lemma~\ref{lem:BW_scale_one} that $s_G(g)=s_G(g\inv)=1$.
\end{proof}

\subsection{The parabolic group}\label{sec:parabolic}

For NPC spaces, we have the following description of the stabilizer of the attracting point of a hyperbolic isometry.

\begin{prop}\label{prop:fixed_end_criterion}
Let $X$ be an NPC space, let $x \in X$ and let $g$ and $h$ be isometries of $X$, such that $g$ is hyperbolic.  Then $h$ stabilizes $\xi = \xi_+(g)$ if and only if the sequence $r_n = d(hg^nx,g^nx)$ is bounded over all $n \ge 0$.  Moreover, if $h$ stabilizes $\xi$ then there exists $n_0$ such that for all $n \ge n_0$ we have
\[
d(hg^nx,g^nx) \le d(hx,x) + c_{g,x},
\]
where $c_{g,x}$ is a constant independent of $h$, and where for fixed $(g,x)$, $n_0$ can be bounded by a function of $d(hx,x)$ of linear growth.
\end{prop}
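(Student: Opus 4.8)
The plan is to establish the stated equivalence by proving the two implications separately, extracting the quantitative ``moreover'' as a byproduct of the harder one.

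\emph{Easy direction: $(r_n)_{n\ge0}$ bounded $\Rightarrow$ $h$ stabilizes $\xi$.} Since $g$ is hyperbolic, $n \mapsto g^nx$ is a quasi-isometric embedding of $\Nb$, so $(g^nx)_{n\ge0}$ converges to $\xi = \xi_+(g)$. If $(r_n)$ is bounded then $(hg^nx)_{n\ge0}$ fellow-travels $(g^nx)_{n\ge0}$, and in an NPC space a bounded perturbation of a sequence converging to a boundary point converges to the same boundary point (immediate from convexity of the metric and comparison triangles when $X$ is CAT(0); from the basic inequalities on Gromov products when $X$ is $\delta$-hyperbolic). Thus $h\xi = \lim_n hg^nx = \xi$.

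\emph{Hard direction: $h$ stabilizes $\xi$ $\Rightarrow$ the displayed bound.} The common template in both cases is: (i) fix a geodesic ray $\rho$ with $\rho(\infty)=\xi$ and $\rho(0)$ at a bounded distance $D'$ from $x$; (ii) show $g^nx$ lies within a bounded distance $\kappa$ of $\rho(t_n)$ for a parameter $t_n\ge0$ that grows at least linearly in $n$; (iii) note $\rho$ and $h\rho$ are geodesic rays with common limit $\xi$ (using $h\xi=\xi$), with endpoints at distance $r:=d(\rho(0),h\rho(0))\le d(hx,x)+2D'$; (iv) assemble, via the triangle inequality,
\[
d(hg^nx,g^nx)\le d(g^nx,\rho(t_n))+d(\rho(t_n),h\rho(t_n))+d(h\rho(t_n),hg^nx),
\]
where $d(h\rho(t_n),hg^nx)=d(\rho(t_n),g^nx)\le\kappa$ since $h$ is an isometry, and the middle term is bounded by a uniform fellow-travelling estimate for $\rho$ and $h\rho$.

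For the CAT(0) case, $g$ is axial; take $\rho$ to be the ray from $x$ to $\xi$ (Lemma~\ref{lem:visibility}(i)), so $D'=0$ and $r=d(hx,x)$. Nearest-point projection onto an axis $L$ of $g$ commutes with $g$, so $g^nx$ stays at distance $D:=d(x,L)$ from a point of $L$ moving forward by $n|g|$, which lies on $\rho$ up to error $D$ by Lemma~\ref{lem:decreasing_distance}; hence $d(\rho(n|g|),g^nx)\le 2D$, i.e. $t_n=n|g|$, $\kappa=2D$. Lemma~\ref{lem:decreasing_distance} also gives $d(\rho(t),h\rho(t))\le d(hx,x)$ for all $t\ge0$, so the assembly yields $d(hg^nx,g^nx)\le d(hx,x)+4D$ for all $n\ge0$; here $n_0=0$ and $c_{g,x}=4d(x,L)$. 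For the $\delta$-hyperbolic case, $g$ is QI-hyperbolic, so there is a geodesic line from $\xi_-(g)$ to $\xi$; let $\rho$ be its forward ray and $\kappa$ the constant from Lemma~\ref{lem:close_geodesic} for the bi-infinite quasi-geodesic $(g^nx)_{n\in\Zb}$. Then $d(g^nx,\rho(t_n))\le\kappa$ for large $n$, and since $t_n=d(\rho(t_n),\rho(0))\ge d(g^nx,\rho(0))-\kappa$ and $d(g^nx,x)$ grows at least linearly in $n$, $t_n$ grows at least linearly. Lemma~\ref{lem:thin_omega_triangle} gives $d(\rho(t),h\rho(t))\le 48\delta+r$ for $t\ge 24\delta+r$, so the condition $t_n\ge 24\delta+r$ holds once $n\ge n_0$ for some $n_0$ bounded above by a linear function of $r$, hence of $d(hx,x)$; the assembly then gives $d(hg^nx,g^nx)\le 2\kappa+48\delta+r\le d(hx,x)+(2\kappa+48\delta+2D')$ for $n\ge n_0$.

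The step I expect to be the main obstacle is (ii): locating $g^nx$ along $\rho$, that is, producing $t_n$ with $d(g^nx,\rho(t_n))$ bounded and $t_n$ growing linearly in $n$. In the CAT(0) case this is essentially free from $g$-equivariance of projection onto the axis, but in the $\delta$-hyperbolic case it must be teased out of Lemma~\ref{lem:close_geodesic} together with the quasi-isometry lower bound on $d(g^nx,x)$; moreover it is precisely the ``$t\ge 24\delta+r$'' range restriction in Lemma~\ref{lem:thin_omega_triangle} that forces the threshold $n_0$ to exist and makes its linear dependence on $d(hx,x)$ visible, so the bookkeeping tying together the linear growth of $t_n$, the constraint on $t$, and the bound $r\le d(hx,x)+2D'$ is where attention is needed.
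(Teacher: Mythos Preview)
Your proof is correct and follows essentially the same approach as the paper's: split into the CAT(0) and $\delta$-hyperbolic cases, choose a reference geodesic toward $\xi$, invoke Lemma~\ref{lem:decreasing_distance} or Lemma~\ref{lem:thin_omega_triangle} respectively for the fellow-traveling estimate between $\rho$ and $h\rho$, and assemble via the triangle inequality. The only differences are cosmetic choices of reference geodesic---the paper works directly with a labelled axis of $g$ in the CAT(0) case (skipping your projection step but reaching the same constant $4d(x,L)$) and takes the ray based at $x$ itself in the $\delta$-hyperbolic case (so $D'=0$)---yielding the same bounds and the same linear dependence of $n_0$ on $d(hx,x)$.
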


\begin{proof}
Write $x_n = g^nx$.  If the sequence $d(hx_n,x_n)$ is bounded over all $n \ge 0$, we see that both of the sequences $(x_n)_{n \ge 0}$ and $(hx_n)_{n \ge 0}$ converge to $\xi$, so $h\xi = \xi$.

Conversely, suppose $h$ stabilizes $\xi$.

If $g$ is axial and $X$ is a complete CAT(0) space, take $\gamma$ to be a labelled axis for $g$.  Then by Lemma~\ref{lem:decreasing_distance} and the triangle inequality, for all $t \ge 0$ we have 
\[
d(h\gamma(t),\gamma(t)) \le d(h\gamma(0),\gamma(0)) \le d(hx,x)+2d(x,\gamma(0)).
\]
At the same time, we have
\[
d(\gamma(t_n),x_n) = d(g^n\gamma(0),g^nx_n) = d(x,\gamma(0)), \text{ where } t_n = n|g|.
\]
It follows that for all $n$ we have 
\[
d(hx_n,x_n) \le d(hx_n,h\gamma(t_n)) + d(h\gamma(t_n),\gamma(t_n)) +  d(\gamma(t_n),x_n) \le d(hx,x) + 4d(x,\gamma(0)),
\]
so $d(hx_n,x_n)$ is bounded as claimed (with $n_0=0$).

If instead $X$ is a proper $\delta$-hyperbolic space and $g$ is QI-hyperbolic, let $\gamma$ be a geodesic ray from $x$ to $\xi$.  Then by Lemma~\ref{lem:thin_omega_triangle}, for all $t > 24\delta+d(hx,x)$ we have 
\[
d(h\gamma(t),\gamma(t)) \le d(hx,x) + 48\delta.
\]
At the same time, the sequence $(x_n)$ converges to $\xi$ and is contained in a quasi-geodesic, so by Lemma~\ref{lem:close_geodesic} there is a constant $c_1$ (depending on $(g,x)$) and a sequence of real numbers $t_n$ with linear growth rate (that is, such that $n \mapsto t_n$ is a quasi-geodesic in $\Rb$) such that $d(x_n,\gamma(t_n)) \le c_1$.  Take $n_0$ such that $t_n > 24\delta+d(hx,x)$ for all $n \ge n_0$; note that $n_0$ can be taken to depend linearly on $d(hx,x)$ as $d(hx,x) \rightarrow \infty$.  It follows that for all $n \ge n_0$ we have 
\[
d(hx_n,x_n) \le d(hx,x) + 48\delta + 2c_1,
\]
so again, $d(hx_n,x_n)$ is bounded as claimed.
\end{proof}

We can now prove the theorem from the introduction about the parabolic group.

\begin{proof}[Proof of Theorem~\ref{thm:para_stable}]
Let $v_0 \in X$ and let $v_n = g^{-n}v_0$; thus $(v_n)$ converges to $\xi$.  Let $u \in G$ and write $k_n = g^nug^{-n}$; thus
\[
uv_n = g^{-n}g^nug^{-n}v_0 = g^{-n}k_nv_0,
\]
and hence $d(uv_n,v_n) = d(k_nv_0,v_0)$.

Suppose $u \in \para_G(g)$; then sequence $k_n$ is confined to a compact subset of $G$, and so by continuity the set $\{k_nv_0\}$ is bounded.  Since $d(uv_n,v_n) = d(k_nv_0,v_0)$, it follows that $d(uv_n,v_n)$ is bounded independently of $n$, and consequently the sequence $(uv_n)_{n \ge 0}$ converges to $\xi$.  In particular, $u\xi = \xi$, so $\para_G(g)$ stabilizes $\xi$.

Now suppose that $g$ is hyperbolic and the action of $G$ is proper, and let $u \in G_{\xi}$.  Then $d(k_nv_0,v_0) = d(uv_n,v_n)$ is bounded over all $n \ge 0$ by Proposition~\ref{prop:fixed_end_criterion}.  By properness of the action it follows that $k_n$ is confined to a compact subset, and hence $u \in \para_G(g)$.
\end{proof}

By standard scale theory results, we thus gain insight into the scale of a hyperbolic $g$ by considering the stabilizer of its attracting and repelling points.

\begin{cor}\label{cor:parabolic_scale}
Let $X$ be an NPC space and let $G$ be a \tdlc group acting continuously by isometries. Suppose $g \in G$ has an attracting point $\xi_+$ and repelling point $\xi_-$.
\begin{enumerate}[(i)]
\item We have $s_G(g) = s_{G_{\xi_+}}(g)$.
\item If $s_G(g)=1$ then $G_{\xi_-}$ is open.
\item If $G$ acts properly on $X$ and $g$ is hyperbolic, then $s_G(g) = \Delta_{G_{\xi_+}}(g)$.
\item If $G$ acts properly on $X$, $g$ is hyperbolic and $G_{\xi_-}$ is open, then $s_G(g)=1$.
\end{enumerate}
\end{cor}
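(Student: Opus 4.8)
The plan is to obtain all four parts by combining Theorem~\ref{thm:para_stable} with the standard scale-theoretic facts recalled in Section~2.1, namely Lemmas~\ref{lem:scale_subgroup} and~\ref{lem:BW_scale_one} and Corollary~\ref{cor:scale_subgroup:modular}. Two preliminary observations make the bookkeeping cleaner: since a bounded isometry has no attracting point, the hypothesis that $g$ has an attracting point already forces $g$ to have unbounded action on $X$; and since $\xi_- = \xi_+(g\inv)$, Theorem~\ref{thm:para_stable} applies equally to $g\inv$, with the roles of $\xi_+$ and $\xi_-$ exchanged.

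For (i): if $h \in \con_G(g\inv)$ then $g^{-n}hg^{n} \to 1$ as $n \to +\infty$, so $\{g^{-n}hg^{n} : n \ge 0\}$ is relatively compact; thus $\con_G(g\inv) \le \para_G(g\inv)$, and the latter lies in $G_{\xi_+}$ by Theorem~\ref{thm:para_stable}. The subgroup $G_{\xi_+}$ is $\grp{g}$-invariant (since $g$ fixes its own attracting point) and closed in $G$, so Lemma~\ref{lem:scale_subgroup}, applied to conjugation by $g$ and to $H = G_{\xi_+} \ge \con_G(g\inv)$, gives $s_G(g) = s_{G_{\xi_+}}(g)$.

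For (ii) and (iv) I would use the equivalence $s_G(g) = 1 \iff \para_G(g)$ open from Lemma~\ref{lem:BW_scale_one}, together with the inclusion $\para_G(g) \le G_{\xi_-}$ obtained by applying Theorem~\ref{thm:para_stable} to $g\inv$ (which has unbounded action and attracting point $\xi_-$). If $s_G(g) = 1$ then $\para_G(g)$ is open and contained in $G_{\xi_-}$, forcing $G_{\xi_-}$ open; this is (ii), and it uses neither properness nor hyperbolicity. For (iv), the stronger conclusion of Theorem~\ref{thm:para_stable} in the hyperbolic-proper case gives $\para_G(g) = G_{\xi_-}$, so if $G_{\xi_-}$ is open then $\para_G(g)$ is open and $s_G(g) = 1$ by Lemma~\ref{lem:BW_scale_one}.

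For (iii): when $g$ is hyperbolic and the action is proper, Theorem~\ref{thm:para_stable} gives $\para_G(g\inv) = G_{\xi_+}$, and Corollary~\ref{cor:scale_subgroup:modular} (with $\alpha$ conjugation by $g$ and $H = \para_G(g\inv)$) then yields $s_G(g) = \Delta_{\para_G(g\inv)}(g) = \Delta_{G_{\xi_+}}(g)$; this also recovers (i) in this case, since $s_G(g) = s_{\para_G(g\inv)}(g) = s_{G_{\xi_+}}(g)$. I do not expect a serious obstacle: the argument is essentially a matter of citing Theorem~\ref{thm:para_stable} and feeding it into known machinery. The one point requiring care is that $G_{\xi_+}$ be a closed subgroup of $G$, hence a \tdlc group, so that $s_{G_{\xi_+}}(g)$ is defined and Lemma~\ref{lem:scale_subgroup} applies in part (i); for proper $\delta$-hyperbolic $X$ this is immediate from continuity of the $G$-action on the compact space $X \cup \partial X$, and for complete CAT(0) $X$ it follows by observing that $h$ fixes $\xi_+$ exactly when $h$ shifts the Busemann function at $\xi_+$ by an additive constant, a condition stable under limits in $G$ (and in the hyperbolic-proper setting one can bypass this entirely, since there $G_{\xi_+} = \para_G(g\inv)$).
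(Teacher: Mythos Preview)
Your argument is correct and follows essentially the same route as the paper: Theorem~\ref{thm:para_stable} gives $\para_G(g\inv) \le G_{\xi_+}$ and $\para_G(g) \le G_{\xi_-}$ (with equality in the hyperbolic-proper case), and then parts (i)--(iv) drop out of Lemma~\ref{lem:scale_subgroup}, Lemma~\ref{lem:BW_scale_one} and Corollary~\ref{cor:scale_subgroup:modular} exactly as you describe. Your extra care in verifying $\con_G(g\inv) \le \para_G(g\inv)$ and in checking that $G_{\xi_+}$ is closed is more detail than the paper supplies, but the approach is the same.
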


\begin{proof}
By Theorem~\ref{thm:para_stable} we have $G_{\xi_-} \ge \para_G(g)$ and $G_{\xi_+} \ge \para_G(g\inv)$.  Part (i) then follows from Lemma~\ref{lem:scale_subgroup} and part (ii) from  Lemma~\ref{lem:BW_scale_one}.

Now suppose that $G$ acts properly on $X$ and $g$ is hyperbolic; then by Theorem~\ref{thm:para_stable} we have $G_{\xi_-} = \para_G(g)$ and $G_{\xi_+} = \para_G(g\inv)$.  Part (iii) now follows from Corollary~\ref{cor:scale_subgroup:modular} and part (iv) from Lemma~\ref{lem:BW_scale_one}.
\end{proof}

We note the following special case of Theorem~\ref{thm:para_stable}.

\begin{cor}\label{cor:compact_fix}
Let $X$ be an NPC space, let $G$ be a locally compact group acting continuously by isometries and let $K$ be a compact normal subgroup of $G$.  Then $K$ stabilizes every point at infinity that occurs as the attracting point of some $g \in G$.
\end{cor}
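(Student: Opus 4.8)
The plan is to realize $K$ as a subgroup of the parabolic group and then invoke Theorem~\ref{thm:para_stable}. Suppose $\xi \in \partial X$ is the attracting point of some $g \in G$. The first step is to observe that $g$ cannot have bounded action on $X$: a bounded isometry has no attracting point, as recorded in Section~\ref{sec:types}. Hence $g$ has unbounded action with attracting point $\xi$, which is precisely the hypothesis needed for the first conclusion of Theorem~\ref{thm:para_stable} (that conclusion requires only a continuous action, not a proper one).

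Next I would place $K$ inside $\para_G(g\inv)$. Since $K$ is a compact normal subgroup of $G$, for every $x \in K$ and every $n \ge 0$ we have $g^{-n}xg^{n} \in K$; as $K$ is compact, the set $\{g^{-n}xg^{n} \mid n \ge 0\}$ has compact closure, so $x \in \para_G(g\inv)$ by definition of the parabolic group. Therefore $K \le \para_G(g\inv)$.

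Finally, Theorem~\ref{thm:para_stable} gives $\para_G(g\inv) \le G_{\xi}$, whence $K \le G_{\xi}$, i.e.\ $K$ stabilizes $\xi$. I do not anticipate any genuine obstacle: the argument is a direct corollary, and the only point needing a word of care is the elementary remark that the existence of an attracting point forces $g$ to be unbounded, so that Theorem~\ref{thm:para_stable} is applicable.
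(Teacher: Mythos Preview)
Your proposal is correct and is essentially the same argument as the paper's: both show $K \le \para_G(g^{-1})$ from compactness and normality of $K$, then invoke Theorem~\ref{thm:para_stable} to obtain $K \le G_\xi$. Your explicit remark that $g$ must be unbounded (so that Theorem~\ref{thm:para_stable} applies) is a point the paper leaves implicit.
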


\begin{proof}
Since $K$ is compact and normal, we have $K \subseteq \para_G(g)$ for every $g \in G$.  In particular, if $\xi = \xi_+(g)$ for some $g \in G$, and hence we have $K \le \para_G(g\inv) \le G_{\xi}$.
\end{proof}

\subsection{Orbits of hyperbolic limit points}

We note the following conditions ensuring the stability of dynamics of hyperbolic elements with respect to the topology of $G$.

\begin{lem}\label{lem:stable_dynamics}
Let $X$ be an NPC space, let $G$ be a \tdlc group acting continuously by isometries and let $g \in G$ be hyperbolic, with attracting point $\xi_+$ and repelling point $\xi_-$.  Let $U$ be a tidy above subgroup for $g$, let $V = U \cap gUg\inv$ and let $h \in VgV$.  Then $|h| = |g|$ and there exists $r \in V$ such that $\xi_+(h) = r\xi_+$ and $\xi_-(h) = r\xi_-$.  Moreover, if either $X$ is $\delta$-hyperbolic or $U$ has a fixed point in $\Min(g)$, then $h$ is hyperbolic.
\end{lem}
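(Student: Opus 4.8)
The plan is to conjugate $h$ into the normal form $ug$ with $u \in V$, apply Lemma~\ref{lem:tidy_stability} to the element $ug$ to compare its orbit with that of $g$, and then read off all the conclusions. First, write $h = v_1gv_2$ with $v_1,v_2 \in V$; since $V = U \cap gUg\inv$ is a subgroup, $u := v_2v_1 \in V$ and $h' := v_2hv_2\inv = ug$. Conjugation by the isometry $v_2$ preserves translation length and the property of being hyperbolic, and relates limit points by $\xi_\pm(h) = v_2\inv\xi_\pm(h')$ whenever the right-hand side is defined; so it suffices to prove $|h'| = |g|$, that $\xi_\pm(h') = r\xi_\pm$ for some $r \in V$, and that $h'$ is hyperbolic under the stated extra hypotheses — the required element for $h$ being then $v_2\inv r \in V$.

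Then, since $U$ is tidy above for $g$ and $u \in U \cap gUg\inv$, Lemma~\ref{lem:tidy_stability} provides $r \in U$ with $(h')^n \in rg^nU$ for every $n \in \Zb$. Evaluating at $n=1$ gives $ug = rgw$ for some $w \in U$, hence $r\inv u = gwg\inv \in U \cap gUg\inv = V$, and since $u \in V$ this forces $r \in V$. Now fix $x \in X$ and write $(h')^n = rg^nu_n$ with $u_n \in U$. Compactness of $U$ gives $d((h')^nx, rg^nx) = d(u_nx,x) \le C$ for a constant $C$ independent of $n$, while $d(rg^nx,x) = d(g^nx, r\inv x)$ differs from $d(g^nx,x)$ by at most $d(x, r\inv x)$; hence $d((h')^nx,x)$ stays within a bounded distance of $d(g^nx,x)$, and dividing by $n$ and taking $\liminf$ yields $|h'| = |g|$, which is positive since $g$ is hyperbolic.

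Next, for the limit points, note that $g^nx \to \xi_+$ and $g^{-n}x \to \xi_-$ as $n \to +\infty$, so $rg^nx \to r\xi_+$ and $rg^{-n}x \to r\xi_-$ (isometries of $X$ extend to homeomorphisms of $X \cup \partial X$). Since $d((h')^nx, rg^nx) \le C$ for all $n$, the orbit of $h'$ converges to the same boundary points, giving $\xi_+(h') = r\xi_+$ and $\xi_-(h') = r\xi_-$. For the hyperbolicity of $h'$ (equivalently, of $h$): if $X$ is $\delta$-hyperbolic, then $n \mapsto g^nx$ is a quasi-isometric embedding of $\Zb$, hence so is $n \mapsto rg^nx$, and hence so is $n \mapsto (h')^nx$, which stays within $C$ of it; so $h'$ is QI-hyperbolic, i.e.\ hyperbolic. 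If instead $U$ fixes a point $p \in \Min(g)$, then $d(gp,p) = |g|$ by definition of $\Min(g)$; since $v_1p = v_2p = p$ we get $hp = v_1gp$, so $d(hp,p) = d(v_1gp, v_1p) = d(gp,p) = |g| = |h|$, whence $p \in \Min(h)$, and since $|h| > 0$, Lemma~\ref{lem:min_set}(ii) shows $h$ has an axis, hence is hyperbolic.

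I expect no serious obstacle here: the argument is essentially bookkeeping around Lemma~\ref{lem:tidy_stability}. The one point needing a little care is the claim that a sequence staying a bounded distance from a sequence converging to a boundary point converges to the same point — this is immediate from the Cauchy--Gromov description of $\partial X$ in the $\delta$-hyperbolic case and follows from convexity of the metric along geodesics in the complete CAT(0) case (and it can be avoided altogether in the $\delta$-hyperbolic case by using only the quasi-isometric-embedding argument). The one genuinely substantive point is recognizing that the extra hypothesis in the CAT(0) case is present precisely to promote the (unconditional) QI-hyperbolicity of $h$ to axiality, via the explicit point $p \in \Min(h)$.
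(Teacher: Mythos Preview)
Your proof is correct and follows essentially the same route as the paper: reduce by conjugation to $h'=ug$ with $u\in V$, invoke Lemma~\ref{lem:tidy_stability} to control $(h')^n$ in terms of $rg^n$, then read off the translation length, limit points, and hyperbolicity. You are in fact a bit more careful than the paper in one respect: the paper simply asserts that the element $r$ produced by Lemma~\ref{lem:tidy_stability} lies in $V$ (and writes $h^n\in rg^nVr^{-1}$), whereas you derive $r\in V$ explicitly from the $n=1$ case, and work with the weaker containment $(h')^n\in rg^nU$ that the lemma actually states; either version suffices for the orbit comparison. In the $\delta$-hyperbolic case the paper concludes hyperbolicity from $\xi_+(h)\neq\xi_-(h)$, while you use the quasi-isometric embedding directly; both are valid and equivalent here.
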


\begin{proof}
Since the desired properties of $h$ are clearly stable under conjugation in $V$, we may assume $h = ug$ for some $u \in V$.  By Lemma~\ref{lem:tidy_stability}, there is then some $r \in V$ such that $h^n \in rg^n Vr\inv$ for all $n \in \Zb$.  By continuity, $d(vr\inv x,x)$ is bounded over $v \in V$, so $d(rg^nvr\inv x,rg^nr\inv x)$ is also bounded over $v \in V$ and $n \in \Zb$.  In particular, the sequence $(h^nx)_{n \ge 0}$ is asymptotic to $(rg^nr\inv x)_{n \ge 0}$, and hence $h$ has attracting point $r\xi$.  Similarly, $h$ has repelling point $r\xi$.  Since $d(h^nx,rg^nr\inv x)$ is bounded over $n \in \Zb$, we see that $|h| = |g|$.

In the case that $X$ is $\delta$-hyperbolic, the fact that $\xi_+(h) \neq \xi_-(h)$ immediately ensures that $h$ is hyperbolic.  If instead $X$ is a complete CAT(0) space and $U$ has a fixed point $x$ in $\Min(g)$, then $hx = ugx = gx$, so $x \in \Min(h)$.  In particular, $\Min(h)$ is nonempty; since $h$ is unbounded we deduce from Lemma~\ref{lem:min_set} that $h$ is hyperbolic.
\end{proof}

\begin{cor}\label{cor:countable_hyperbolic}
Let $X$ be an NPC space, and let $G$ be a $\sigma$-compact \tdlc group acting smoothly by isometries.  Then $G$ has at most countably many orbits on $\partial^{\hb}_X G$ and in particular, at most countably many points in $\partial^{\hb}_X G$ have open stabilizer in $G$.
\end{cor}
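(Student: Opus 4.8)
The plan is to combine Lemma~\ref{lem:stable_dynamics} with the $\sigma$-compactness of $G$ to show that only countably many $G$-orbits can occur among the attracting points of hyperbolic elements. First note that $\partial^{\hb}_X G = \{\xi_+(g) \mid g \in G \text{ hyperbolic}\}$: a hyperbolic element $g$ has precisely two boundary limit points, namely $\xi_+(g)$ and $\xi_-(g) = \xi_+(g^{-1})$, and $g^{-1}$ is hyperbolic whenever $g$ is. Write $H$ for the set of hyperbolic elements of $G$. Given $g \in H$, Theorem~\ref{thm:Willis} provides a compact open subgroup $U$ that is tidy (hence tidy above) for $g$; set $V = U \cap gUg^{-1}$, which is again compact and open. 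Then $VgV$ is an open neighbourhood of $g$ in $G$, and by Lemma~\ref{lem:stable_dynamics} every $h \in VgV$ satisfies $\xi_+(h) = r\xi_+(g)$ for some $r \in V$, so $\xi_+(h) \in V\xi_+(g) \subseteq G\xi_+(g)$. Thus the attracting points of all hyperbolic elements in $VgV$ lie in the single $G$-orbit of $\xi_+(g)$.

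The upshot is that the relation on $H$ defined by $g \sim g'$ if and only if $\xi_+(g)$ and $\xi_+(g')$ lie in a common $G$-orbit has relatively open equivalence classes (the class of $g$ contains $VgV \cap H$), so it partitions $H$ into relatively clopen pieces, one for each $G$-orbit on $\partial^{\hb}_X G$. Since $G$ is $\sigma$-compact, the open cover $\{VgV \mid g \in H\}$ of $H$ admits a countable subcover; choosing a representative $g_i$ from the $i$-th member, we get $\partial^{\hb}_X G \subseteq \bigcup_{i} G\xi_+(g_i)$, so there are at most countably many $G$-orbits. For the final clause: if $\xi \in \partial^{\hb}_X G$ has open stabiliser $G_\xi$, then $[G : G_\xi]$ is countable because $G$ is $\sigma$-compact and $G_\xi$ is open, so $G\xi$ is countable; the points of $\partial^{\hb}_X G$ with open stabiliser form a union of $G$-orbits (conjugates of open subgroups are open), hence a countable union of countable sets, hence a countable set.

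All of the dynamical content sits in Lemma~\ref{lem:stable_dynamics}; the step I expect to require the most care is the passage from ``$G$ is $\sigma$-compact'' to ``the cover $\{VgV\}$ of $H$ has a countable subcover'', since $H$ is in general neither open nor closed in $G$. The clean way to handle this is to reduce to the case that $G$ is second countable: quotient by the kernel of the action, which is compact (it is contained in a point stabiliser), so that $G$ embeds as a closed subgroup of $\Isom(X)$; when $X$ is proper, in particular in the $\delta$-hyperbolic case, $\Isom(X)$ is second countable, and a similar reduction is available in the complete CAT(0) case after restricting to a separable $G$-invariant subspace. Once $G$ is second countable, $H$ is Lindel\"of and the countable subcover is immediate.
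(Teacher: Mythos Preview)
Your overall strategy is exactly the paper's: invoke Lemma~\ref{lem:stable_dynamics} to produce, for each hyperbolic $g$, an open neighbourhood $VgV$ on which the attracting point stays in a single $G$-orbit, and then use $\sigma$-compactness to bound the number of orbits. Your treatment of the final clause also matches the paper's.

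The divergence, and the gap, is in how you extract countability. Pulling a countable subcover out of $\{VgV : g \in H\}$ requires $H$ (or its open hull in $G$) to be Lindel\"of, and $\sigma$-compactness of $G$ alone does not guarantee this. Your proposed workaround is not solid as written: the corollary does not assume the action is proper, so point stabilisers (and hence the kernel of the action) need not be compact, and the ``restrict to a separable $G$-invariant subspace'' step in the complete CAT(0) case is only a gesture.

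The paper avoids this detour by using a little more of what Lemma~\ref{lem:stable_dynamics} gives. For \emph{every} $h \in VgV$, hyperbolic or not, the lemma says $\xi_+(h)$ exists and lies in $V\xi_+(g) \subseteq G\xi_+(g)$. Hence if $g_1,g_2 \in H$ have attracting points in distinct $G$-orbits, the open sets $V_1g_1V_1$ and $V_2g_2V_2$ are already disjoint in $G$ (a common element $h$ would force $\xi_+(g_1)$ and $\xi_+(g_2)$ into the same orbit via $\xi_+(h)$). So the $G$-orbits on $\partial^{\hb}_X G$ are indexed by a family of pairwise disjoint nonempty open subsets of $G$ itself. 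In a $\sigma$-compact locally compact group any such family is countable: each member has positive Haar measure and the measure is $\sigma$-finite. This finishes the argument with no appeal to second countability.
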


\begin{proof}
For each hyperbolic $g \in G$, Lemma~\ref{lem:stable_dynamics} ensures there is a neighbourhood of $g$ consisting entirely of hyperbolic elements whose limit points are in the same $G$-orbit as those of $g$.  This leads to a partition of the hyperbolic elements of $G$ into open sets.  Since $G$ is $\sigma$-compact, any collection of disjoint open sets in $G$ is countable, and we deduce that $G$ has at most countably many orbits on $\partial^{\hb}_X G$.  Since $G$ is $\sigma$-compact, any orbit of locally invariant points in $\partial^{\hb}_X G$ is itself countable; thus the set of locally invariant points in $\partial^{\hb}_X G$ under the action of $G$ is countable.
\end{proof}

\subsection{Rigidity of coaxial pairs}\label{sec:uniscalar_hyperbolic}

As before we let $X$ be an NPC space, and let $G$ be a group acting on $X$ by isometries.

Given $\xi_1,\xi_2 \in \partial X$, say that the pair $(\xi_1,\xi_2)$ is \defbold{coaxial} if there exists a hyperbolic $g \in G$ such that $\xi_+(g) = \xi_1$ and $\xi_-(g) = \xi_2$.  Given $\xi_1,\xi_2 \in \partial^{\hb}_X G$, recall that we write $\xi_1 \rightarrow \xi_2$ if for all $g \in G$ hyperbolic we have $\xi_+(g) = \xi_1 \Rightarrow \xi_-(g) = \xi_2$.


In this subsection we will obtain equivalent descriptions of when $\xi_1 \rightarrow \xi_2$ for a proper continuous action of the \tdlc group $G$ acting properly and continuously in terms of the scale function, which points in $\partial^{\hb}_X G$ have open stabilizer, and which stabilizers contain each other.  This will lead to the proof of Theorem~\ref{thm:uniscalar_hyperbolic}.

The first equivalence we obtain does not involve the group topology.

\begin{lem}\label{lem:fixed_points_hyperbolic}
Let $X$ be an NPC space and let $g$ be a hyperbolic isometry.  Let $\xi_1 = \xi_+(g)$ and take $\xi_2 \in \partial X$ opposite $\xi_1$.  Then $g$ stabilizes $\xi_2$ if and only if $\xi_2 = \xi_-(g)$.
\end{lem}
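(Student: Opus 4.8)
The plan is to prove the two implications separately; the reverse one is immediate, and for the forward one I would split according to whether $X$ is CAT(0) or $\delta$-hyperbolic. If $\xi_2 = \xi_-(g) = \xi_+(g^{-1})$, then $\xi_2$ is the attracting point of $g^{-1}$, hence is fixed by $g^{-1}$ and therefore by $g$; this gives the ``if'' direction with no extra hypotheses. For the converse, assume $g$ stabilizes $\xi_2$. Since $\xi_2$ is opposite $\xi_1 := \xi_+(g)$, fix a geodesic line $\gamma$ with $\gamma(-\infty) = \xi_2$ and $\gamma(+\infty) = \xi_1$, and note that $g^n\gamma$ is again a geodesic line from $\xi_2$ to $\xi_1$ for every $n \in \Zb$, since $g$ fixes both ends.

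\emph{The $\delta$-hyperbolic case.} Here $g$ is QI-hyperbolic. The key geometric input is that in a proper $\delta$-hyperbolic space a geodesic line is pinned down, up to bounded Hausdorff distance, by its pair of endpoints at infinity: using the $24\delta$-thinness of generalized triangles (Lemma~\ref{lem:thin_omega_triangle}), I would show that every geodesic line with endpoints $\xi_1,\xi_2$ lies within a bounded distance of $\gamma$, so in particular $(g^n\gamma(0))_{n \in \Zb}$ stays within a bounded distance of $\gamma$. But $n \mapsto g^n\gamma(0)$ is a bi-infinite quasi-geodesic (as $g$ is QI-hyperbolic), so by the Cauchy--Gromov criterion it converges to $\xi_+(g) = \xi_1$ as $n \to +\infty$ and to $\xi_+(g^{-1}) = \xi_-(g)$ as $n \to -\infty$; since the sequence remains a bounded distance from $\gamma$, each of these limits must be one of the two endpoints of $\gamma$. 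As $\xi_-(g) \ne \xi_1 = \xi_+(g)$ (because $g$ is hyperbolic), this forces $\xi_-(g) = \xi_2$.

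\emph{The CAT(0) case.} Here $g$ is axial. Since $g$ fixes $\xi_1$ and $\xi_2$ it permutes the geodesic lines from $\xi_2$ to $\xi_1$, hence stabilizes the parallel set $P$ of $\gamma$, which splits as $P = Y \times \Rb$ with the $\Rb$-fibres being exactly these lines, oriented so that the $+\infty$ end is $\xi_1$. Thus $g|_P$ has the form $(y,t) \mapsto (g'y,\, t+\tau)$ with $g' \in \Isom(Y)$ and $\tau \in \Rb$. Since $g^np \to \xi_+(g) = \xi_1$ for every $p \in P$ and $\xi_1$ is the $+\infty$-direction of the $\Rb$-factor, this forces $\tau > 0$ and forces $g'$ to have translation length $0$ on $Y$. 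Next, choosing $z \in \Min(g)$ (nonempty, as $g$ is axial) and applying the $g$-equivariant, $1$-Lipschitz nearest-point projection $\pi_P \colon X \to P$, we get $d(g\pi_P(z),\pi_P(z)) \le d(gz,z) = |g|$, while $d(gw,w) \ge |g|$ for all $w$; hence $\pi_P(z) \in \Min(g) \cap P$. Because $g'$ has zero translation length, $\Min(g) \cap P = Y^{g'} \times \Rb$, so $g'$ fixes some $y^* \in Y$, and then $\{y^*\} \times \Rb$ is a $g$-invariant geodesic line, i.e.\ (as $\tau > 0$) an axis of $g$ whose attracting end is $\xi_1$; its repelling end $\xi_-(g)$ is therefore the $-\infty$ end $\xi_2$.

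The main obstacle is the CAT(0) case. It is tempting to finish, after splitting off the ``$\xi_1$-direction'', merely by observing that the transverse part $g'$ of $g$ has zero translation length; but this alone does not produce a fixed point of $g'$ (a zero-translation-length CAT(0) isometry may be parabolic), and without such a fixed point one does not obtain an axis running from $\xi_2$ to $\xi_1$. The projection-into-$\Min(g)$ step is exactly what upgrades ``zero translation length transverse to $\xi_1$'' to ``$g'$ has a fixed point'', after which one still has to track the sign of $\tau$ in order to identify the repelling end of the new axis with $\xi_2$ rather than with $\xi_1$.
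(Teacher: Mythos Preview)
Your proof is correct but differs from the paper's in both cases. For $\delta$-hyperbolic $X$ the paper simply cites the classical fact that a hyperbolic isometry has exactly two fixed points on $\partial X$; you instead rederive this from thinness of ideal triangles and the quasi-geodesic nature of orbits. In the CAT(0) case both proofs use the product decomposition $P \cong Y^* \times \Rb$ of the parallel set of a line from $\xi_2$ to $\xi_1$, but the paper takes a shorter route: it observes that the forward $g'$-orbit in $Y^*$ is bounded (because $\xi_+(g)=\xi_1$ is the end of the $\Rb$-fibre), and since $g'$ is an isometry of $Y^*$ the backward orbit is bounded as well, so the whole $g$-orbit stays near a single fibre and $\xi_-(g)$ must be one of $\xi_1,\xi_2$. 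Your argument instead deduces $|g'|=0$ and then uses the existence of $\Min(g)$ together with the $g$-equivariant nearest-point projection onto $P$ to manufacture an actual axis of $g$ inside $P$ with ends $\xi_2,\xi_1$. This is a bit more work, but it yields the slightly stronger conclusion that such an axis exists; your discussion of the obstacle---that $|g'|=0$ alone need not give a fixed point of $g'$---is precisely the issue the paper sidesteps by never needing one.
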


\begin{proof}
If $X$ is $\delta$-hyperbolic, the conclusion follows immediately from the fact that $g$ has only two fixed points in $\partial X$ (e.g. \cite[Chapitre 10 Proposition 6.6]{CDP}), so from now on, we may assume that $X$ is a complete CAT(0) space.  Certainly $g$ stabilizes $\xi_-(g)$, so let us suppose $\xi_-(g) \not\in \{\xi_1,\xi_2\}$ and obtain a contradiction.

Let $Y$ be the union of all geodesic lines from $\xi_1$ to $\xi_2$; by assumption $Y$ is nonempty.  By the product decomposition theorem (see \cite[II.2.14]{BH}), $Y$ is a closed convex space that decomposes as $Y^* \times \Rb$, where the geodesic lines from $\xi_1$ to $\xi_2$ are the fibres $\{y^*\} \times \Rb$.  Since $g$ stabilizes $\xi_1$ and $\xi_2$, it acts on $Y$ and respects the product decomposition.  Let $x \in Y$ and let $\pi$ be the natural projection from $Y$ to $Y^*$.  Since $\xi_+(g) = \xi_1$, we see that each $g$-orbit stays within a bounded distance of $\{\pi(x)\} \times \Rb$; in particular, $\{\pi(g^{n}x)_{n \ge 0}\}$ has finite diameter.  Since $g$ induces an isometry of $Y^*$, it follows that $\{\pi(g^{-n}x)_{n \ge 0}\}$ also has finite diameter.  But then $\xi_-(g) \in \{\xi_1,\xi_2\}$ and we have our contradiction.
\end{proof}

Note that if $X$ is a complete CAT(0) space, then $g$ can stabilize points at infinity that are not opposite $\xi_+(g)$: for example, any translation of $\Rb^2$ fixes $\partial \Rb^2$.

\begin{cor}\label{cor:fixed_points_hyperbolic}
Let $X$ be an NPC space, let $G$ be a group acting on $X$ by isometries.  Let $\xi_1 \in \partial^{\hb}_X G$ and take $\xi_2 \in \partial X$ opposite $\xi_1$.  Then $\xi_1 \rightarrow \xi_2$ if and only if $G_{\xi_1} \le G_{\xi_2}$.
\end{cor}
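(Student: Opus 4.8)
The plan is to prove both implications by exploiting two elementary facts. First, a hyperbolic isometry always stabilizes its own attracting and repelling points: if $\xi = \xi_+(g)$ then $g\xi = g(\lim_n g^nx) = \lim_n g^{n+1}x = \xi$ by continuity of $g$ on $X \cup \partial X$. Second, every point of $\partial^{\hb}_X G$ arises as the attracting point of some hyperbolic element of $G$; indeed, if $\xi$ is the repelling point of a hyperbolic $h$, it is the attracting point of the hyperbolic element $h\inv$. The only substantive input is Lemma~\ref{lem:fixed_points_hyperbolic}, which I will invoke for the implication $G_{\xi_1} \le G_{\xi_2} \Rightarrow \xi_1 \rightarrow \xi_2$; the ``opposite'' hypothesis is used only in that direction.

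For that implication, suppose $G_{\xi_1} \le G_{\xi_2}$ and let $g \in G$ be an arbitrary hyperbolic element with $\xi_+(g) = \xi_1$; I must show $\xi_-(g) = \xi_2$. Since $g$ fixes its attracting point, $g \in G_{\xi_1} \le G_{\xi_2}$, so $g$ stabilizes $\xi_2$. As $\xi_2$ is opposite $\xi_1 = \xi_+(g)$, Lemma~\ref{lem:fixed_points_hyperbolic} applies and yields $\xi_2 = \xi_-(g)$. Since $g$ was an arbitrary hyperbolic element with attracting point $\xi_1$, this is exactly $\xi_1 \rightarrow \xi_2$.

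For the converse, suppose $\xi_1 \rightarrow \xi_2$. Since $\xi_1 \in \partial^{\hb}_X G$ there is a hyperbolic $g_0 \in G$ with $\xi_+(g_0) = \xi_1$, whence $\xi_-(g_0) = \xi_2$ by hypothesis. Let $h \in G_{\xi_1}$ be arbitrary and set $g := h g_0 h\inv$. Conjugation by the isometry $h$ preserves the class of hyperbolic isometries (both axiality and QI-hyperbolicity are conjugation-invariant) and transports limit points by $h$, so $g$ is hyperbolic with $\xi_+(g) = h\xi_+(g_0) = h\xi_1 = \xi_1$ and $\xi_-(g) = h\xi_-(g_0) = h\xi_2$. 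Applying $\xi_1 \rightarrow \xi_2$ to $g$ gives $\xi_-(g) = \xi_2$, i.e. $h\xi_2 = \xi_2$, so $h \in G_{\xi_2}$. As $h$ was arbitrary, $G_{\xi_1} \le G_{\xi_2}$.

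There is no genuine obstacle here: the delicate geometric content has already been absorbed into Lemma~\ref{lem:fixed_points_hyperbolic}. The only point worth flagging is the bookkeeping observation that membership of $\partial^{\hb}_X G$ supplies a hyperbolic element whose \emph{attracting} point (not merely repelling point) is $\xi_1$ — this is what allows the conjugation argument in the converse direction to get started, and it is why the definition of $\partial^{\hb}_X G$ as the set of limit points (rather than attracting points) causes no difficulty.
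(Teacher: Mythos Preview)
Your proof is correct and follows essentially the same approach as the paper's. The paper phrases the forward direction as ``if $g \in G_{\xi_1}$ then $\xi_1 \rightarrow g\xi_2$, and uniqueness forces $g\xi_2 = \xi_2$'', but unwinding this is exactly your conjugation argument with $g_0$; the converse in both cases is an immediate appeal to Lemma~\ref{lem:fixed_points_hyperbolic}.
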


\begin{proof}
If $\xi_1 \rightarrow \xi_2$ and $g$ in $G$ stabilizes $\xi_1$, then we have $\xi_1 \rightarrow g\xi_2$; the uniqueness of $\xi_2$ then ensures $g\xi_2 = \xi_2$.  Thus $G_{\xi_1} \le G_{\xi_2}$.  The converse follows from Lemma~\ref{lem:fixed_points_hyperbolic}.
\end{proof}

We now return to the setting of locally compact groups, where we can incorporate the modular function into the characterization of the relation $\rightarrow$.

\begin{prop}\label{prop:modular}
Let $X$ be an NPC space and let $G$ be a locally compact group acting continuously and properly by isometries on $X$.  Let $g \in G$ be hyperbolic, let $\xi_2 \in \partial X$ be opposite $\xi_+(g)$, and let $H$ be the stabilizer of $\xi_+(g)$ in $G$.  Then the following are equivalent:
\begin{enumerate}[(i)]
\item $H$ stabilizes $\xi_2$;
\item $\xi_1 \rightarrow \xi_2$;
\item $\Delta_H(g)=1$ and $\xi_-(g) = \xi_2$.
\end{enumerate}
\end{prop}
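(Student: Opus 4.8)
Write $\xi_1 = \xi_+(g)$, so $H = G_{\xi_1}$. Since $g$ is hyperbolic and the action is proper, Theorem~\ref{thm:para_stable} gives $H = \para_G(g\inv)$, and if a boundary point $\xi$ opposite $\xi_1$ equals $\xi_-(g)$, then Theorem~\ref{thm:para_stable} applied to $g\inv$ gives $G_\xi = \para_G(g)$. The equivalence $(i)\Leftrightarrow(ii)$ is then immediate from Corollary~\ref{cor:fixed_points_hyperbolic}: as $\xi_1 \in \partial^{\hb}_X G$ and $\xi_2$ is opposite $\xi_1$, we have $\xi_1 \to \xi_2$ iff $G_{\xi_1} \le G_{\xi_2}$, i.e.\ iff $H$ stabilizes $\xi_2$. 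So the content is relating these to $(iii)$.

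For $(i)\Rightarrow(iii)$: suppose $H$ stabilizes $\xi_2$. Since $g$ fixes its own attracting point we have $g\in H$, so $g$ stabilizes $\xi_2$; as $\xi_2$ is opposite $\xi_1 = \xi_+(g)$, Lemma~\ref{lem:fixed_points_hyperbolic} forces $\xi_2 = \xi_-(g)$. It remains to show $\Delta_H(g)=1$. Now $H \le G_{\xi_2} = \para_G(g)$, and combined with $H = \para_G(g\inv)$ this means every $u\in H$ has both $\{g^nug^{-n}:n\ge 0\}$ and $\{g^{-n}ug^n:n\ge 0\}$ relatively compact. Fix $x_0\in X$ and a compact identity neighbourhood $W$ of $H$; every $w\in W$ fixes $\xi_1$ and $\xi_2$, so Proposition~\ref{prop:fixed_end_criterion} applied to $g$ (limit $\xi_1$) and to $g\inv$ (limit $\xi_2$), with the bounds made uniform over $w\in W$ using compactness of $W$, shows that $d(g^nwg^{-n}x_0,x_0) = d(wg^{-n}x_0,g^{-n}x_0)$ is bounded over all $n\in\Zb$ and $w\in W$. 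By properness there is a compact $C\le H$ with $g^nWg^{-n}\subseteq C$ for all $n$, and then for a right Haar measure $\mu$ on $H$ we get $\mu(g^nWg^{-n}) = \mu(g^nW) = \Delta_H(g)^n\mu(W) \le \mu(C) < \infty$ for every $n\in\Zb$; hence $\Delta_H(g)=1$.

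For $(iii)\Rightarrow(i)$ --- the crux --- suppose $\Delta_H(g)=1$ and $\xi_2 = \xi_-(g)$; we must show $H\le G_{\xi_2}$. Since $H = \para_G(g\inv)$, Corollary~\ref{cor:scale_subgroup:modular} gives $\Delta_H(g) = s_G(g)$, so the hypothesis is $s_G(g)=1$; in particular $g$ is uniscalar in $H$ (as $s_H(g) = s_G(g) = 1$ and $s_H(g\inv)=1$, the latter since $\para_H(g\inv) = H$, by Lemma~\ref{lem:BW_scale_one}), so $g$ normalises a compact open subgroup $W$ of $H$. From $gWg\inv = W$ we get $Wg^n = g^nW$, so the orbits $Wg^nx_0 = g^n(Wx_0)$ all have the same finite diameter; as $g^nx_0\to\xi_2$ when $n\to-\infty$ (this is where $\xi_2=\xi_-(g)$ enters), each $w\in W$ satisfies $wg^nx_0\to\xi_2$, i.e.\ $w\xi_2 = \xi_2$. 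Thus $W\le G_{\xi_2}$, so $G_{\xi_1,\xi_2} = H\cap G_{\xi_2}$ is an \emph{open} subgroup of $H$. To upgrade this to $H\le G_{\xi_2}$ I would analyse $H = \para_G(g\inv)$ in the case $s_G(g)=1$: the closure $N := \ol{\con_G(g\inv)}$ is compact (Lemma~\ref{lem:BW_scale_one}), so $N\le G_{\xi_2}$ automatically and $N\trianglelefteq H$, and one then produces from the $g$-action on $H/N$ a cofinal family of $g$-invariant compact open subgroups of $H$ containing $N$, each of which lies in $G_{\xi_2}$ by the argument just given, so that $H\le G_{\xi_2}$.

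The step I expect to be the genuine obstacle is this last one --- passing from ``$g$ normalises a compact open subgroup of $\para_G(g\inv)$ fixing $\xi_2$'' to ``$\para_G(g\inv)$ fixes $\xi_2$'' --- since a single $g$-invariant compact open subgroup need not be cofinal, so this cannot be done softly and genuinely uses the structure of $\para_G(g\inv)$ when $\ol{\con_G(g\inv)}$ is compact (via Corollary~\ref{cor:contraction_closure} and the $g$-action on the quotient by $\ol{\con_G(g\inv)}$). For a general locally compact $G$, where the scale machinery is unavailable, I would instead note that the measure computation of the second paragraph shows directly that $\Delta_H(g)=1$ forces $H$ to be unimodular --- its modular function factors through the Busemann cocycle of $H$ at $\xi_1$, on which $g$ takes the nonzero value $\pm|g|$ --- and then argue that a unimodular $\para_G(g\inv)$ must fix $\xi_-(g)$.
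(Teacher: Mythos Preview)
Your argument for $(i)\Leftrightarrow(ii)$ and $(ii)\Rightarrow(iii)$ is correct and essentially the paper's. The gap you flag in $(iii)\Rightarrow(i)$ is real, and the completion you sketch does not work.

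First, the proposition is stated for a general locally compact $G$, so Corollary~\ref{cor:scale_subgroup:modular} and the rest of the scale machinery are unavailable; the paper's argument for this implication is purely measure-theoretic and never uses total disconnectedness.

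Second, even granting total disconnectedness, you only reach ``$G_{\xi_1,\xi_2}$ is open in $H$'', and there is no upgrade via ``a cofinal family of $g$-invariant compact open subgroups of $H$'': compact open subgroups are \emph{small}, not large, so no such family exhausts $H$ (indeed $g$ itself lies in none). What you would actually need is that every $h\in H$ lies in $\para_H(g)$, but by Theorem~\ref{thm:para_stable} that is exactly $H\le G_{\xi_2}$, the statement to be proved. Note also that the implication ``$G_{\xi_-,\xi_+}$ open in $G_{\xi_+}\Rightarrow G_{\xi_+}\le G_{\xi_-}$'' does appear in Theorem~\ref{thm:uniscalar_hyperbolic} as $(v)\Rightarrow(vi)$, but its proof there passes \emph{through} the present proposition, so it cannot be invoked here. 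Your final suggestion (that $\Delta_H$ factors through a Busemann character, hence $H$ is unimodular, hence fixes $\xi_-$) ends at the same unproved assertion.

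The paper argues $(iii)\Rightarrow(i)$ by a direct measure contradiction. Suppose some $h\in H$ does not fix $\xi_-$; then by Proposition~\ref{prop:fixed_end_criterion} the sequence $d(hg^{-n}x,g^{-n}x)$ is unbounded over $n\ge 0$. On the other hand, the forward half of Proposition~\ref{prop:fixed_end_criterion} (towards $\xi_+$, which $h$ does fix) places an entire neighbourhood $E$ of $h$ inside $K_{n_0}:=\bigcap_{m\ge n_0}g^m O_{r+c}\, g^{-m}$ for suitable $r,c,n_0$, where $O_s=\{k\in H: d(kx,x)\le s\}$. Using the unboundedness, one finds $n$ and an open $E'\subseteq E$ disjoint from $K_{-n}$, so $\mu(K_n)\ge\mu(K_{-n})+\mu(E')>\mu(K_{-n})$; but $K_n=g^{2n}K_{-n}g^{-2n}$, contradicting $\Delta_H(g)=1$. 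This is the ``unimodular $\para_G(g^{-1})$ must fix $\xi_-(g)$'' step you gesture at, made precise without any detour through scale theory.
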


\begin{proof}
Write $\xi_- = \xi_-(g)$, and choose a Haar measure $\mu$ on $H$ and a point $x \in X$.  Given $r \ge 0$, let $O_r$ be the set of $h \in H$ such that $d(hx,x) \le r$; note that $O_r$ is a compact identity neighbourhood for all $r>0$.

Cases (i) and (ii) are equivalent by Corollary~\ref{cor:fixed_points_hyperbolic}.

Suppose (ii) holds; clearly $\xi_- = \xi_2$.  By (i) and Proposition~\ref{prop:fixed_end_criterion}, there exists $n_0$ and a constant $c$ such that for all $|n| \ge n_0$ and $h \in O_1$, we have $d(hg^nx,g^nx) \le c$.  In other words, for $|n| \ge n_0$, we have $g^nO_1g^{-n} \subseteq O_c$ and hence $\mu(g^nO_1g^{-n}) \le \mu(O_c) < \infty$.  Thus the $\grp{g}$-conjugates of $O_1$ have bounded measure, showing that $\Delta_H(g)=1$.  Thus (ii) implies (iii).

We now suppose (iii) holds and aim to prove (i).  Suppose for a contradiction that $H$ does not stabilize $\xi_-$.  Let $x \in X$ and $r > 0$.  Then $O_r$ is a compact open identity neighbourhood in $G$, so $0 < \mu(O_r) < \infty$, and we have $H = \bigcup_{r > 0}O_r$.  Since $\xi_-$ is not fixed by $H$, by choosing $r$ large enough we may ensure that there is $h \in O_{r/2}$ such that $h\xi_- \neq \xi_-$.

By Proposition~\ref{prop:fixed_end_criterion} there is a neighbourhood $E$ of $h$ and a natural number $n_0$ such that for all $k \in E$ and $n \ge n_0$ we have $d(kg^nx,g^nx) \le r+c$, where $c$ is independent of $k$.  In other words, we have $E \subseteq K_{n_0}$, where $K_n$ denotes the compact set $\bigcap_{m \ge n}g^mO_{r+c}g^{-m}$; note that by construction, $K_n \subseteq K_{n+1} = gK_ng\inv$.

At the same time, since $h$ does not stabilize $\xi_-$ we have $h \not\in \para_H(g)$, and hence $d(kg^{-n}x,g^{-n}x)$ is unbounded for $n \ge 0$.  Thus there exists $n$ such that
\[
d(g^nhg^{-n}x,x) = d(hg^{-n}x,g^{-n}x) \ge r + 2c.
\]
There is then an open neighbourhood $E'$ of $h$ in $E$ such that
\[
\forall k \in E': d(g^nkg^{-n}x,x) > r + c.
\]
In particular, we see that $E'$ is disjoint from $g^{-n}O_{r+c}g^n$, hence from $K_{-n}$, while being contained in $K_n$.  Since $K_n$ contains $K_{-n}$, we have
\[
\mu(K_n) \ge \mu(K_{-n}) + \mu(E') > \mu(K_{-n});
\]
since $K_n = g^{2n}K_{-n}g^{-2n}$, we deduce that $\Delta_H(g^{2n}) > 1$ and hence $\Delta_H(g) > 1$, giving the required contradiction.  We have now shown that (i)--(iii) are equivalent.
\end{proof}

We have almost finished the proof of the main theorem from the introduction on hyperbolic elements of scale $1$.

\begin{proof}[Proof of Theorem~\ref{thm:uniscalar_hyperbolic}]
(i)--(iv) are equivalent by Corollary~\ref{cor:parabolic_scale}.  By Proposition~\ref{prop:modular} the statements (iii), (vi) and (vii) equivalent.  Clearly (vi) implies (v).  Finally, if (v) holds then (ii) follows by applying Corollary~\ref{cor:parabolic_scale}(iv) to $g$ as an element of $G_{\xi_+}$.  This completes the proof that (i)--(vii) are equivalent.
\end{proof}

\subsection{Axis trees}\label{sec:axis_tree}

We now prove the geometric interpretation of the scale for axial isometries from the introduction.  Recall the notation introduced in Definition~\ref{defn:axis_tree}.

\begin{proof}[Proof of Theorem~\ref{thm:axis_tree}]
Recall that $L_{G,\rho}$ consists of all geodesic lines that contain $\rho([t,+\infty))$ for some $t \in \Rb$, where $\rho: \Rb \rightarrow X$ is a specified geodesic line.  Since $X$ is uniquely geodesic, in fact the intersection of any two elements of $L_{G,\rho}$ must be a ray representing $\xi = \rho(+\infty)$.  The union $T$ of $L_{G,\rho}$ is therefore an $\Rb$-tree, in other words, between any two points in $T$ there is a unique non-backtracking path.  In particular, $T$ is itself a uniquely geodesic space.

We now consider the group $H$.  From the construction of $T$, we see that $H$ is the stabilizer in $G$ of the equivalence class of $\rho_+$, where $g\rho_+$ is equivalent to $\rho_+$ if the intersection of the images of $\rho_+$ and $g\rho_+$ is a ray representing $\xi$.  From this description it is clear that $S \subseteq H$; clearly also $g \in S$.  Now consider $h \in H$ in relation to $\rho$.  We see that $h\rho$ is another axis of translation of $G$ ending at $\xi$.  The fact that $T$ is an $\Rb$-tree then forces this axis to contain $\rho([t_0,+\infty))$ for some $t_0 \in \Rb$; given that $h$ is an isometry stabilizing $\xi$, the only possibility is that there is some (necessarily unique) $\beta(h) \in \Rb$ such that
\[
\rho([t_0,+\infty)) = h\rho([t_0+\beta(h),+\infty)) \text{ and } \forall t \ge t_0+\beta(h): h\rho(t) = \rho(t-\beta(h)).
\]
Thus we have a map $\beta: H \rightarrow \Rb$; given how elements of $h$ act on $\rho(t)$ for $t \gg 0$, it is easy to see that $\beta$ is a continuous homomorphism (indeed $\beta$ is a Busemann character on $(T,d_T)$, see for instance \cite[\S3.C]{CM-CAT}).  If $\beta(h) = 0$, then $h$ fixes a point, so $h$ is bounded.  If $\beta(h) \neq 0$, we see that the $\grp{h}$-translates of $R = \rho([t'_0,+\infty))$ are totally ordered by inclusion for all $t'_0 \ge t_0+|\beta(h)|$, and that in fact $\bigcup_{n \in \Zb}h^nR$ forms an axis for $h$, so $h$ is hyperbolic; it is then clear that $\xi$ is an attracting end of $h$ if and only if $\beta(h)<0$.  We have now proved (ii) except for showing the infimum $\lambda$ of the translation lengths is strictly positive.

Since $\rho_+$ is the unique geodesic ray starting at $\rho(0)$, we see that the group $A = G_{\rho(0)} \cap G_\xi$ fixes $\rho([0,+\infty))$.  From there it is easy to see that $A$ stabilizes $T$, so $A \le H$.  Since $G_{\rho(0)}$ is open in $G$, we deduce that $H$ is open in $G_{\xi}$; in particular, $H$ certainly contains $\con_{G_\xi}(h^{-1})$ for all $h \in H$.  It follows from Corollary~\ref{cor:scale_subgroup:modular} that $s_H(h) = s_{G_\xi}(h)$ for all $h \in H$.  In turn, for $h \in S$, we know by Corollary~\ref{cor:parabolic_scale} that $s_{G_\xi}(h) = s_G(h)$.  This completes the proof of (i).

Suppose $s_G(g)=1$.  Then by Theorem~\ref{thm:uniscalar_hyperbolic}, we see that every element of $S$ has repelling end $\xi_-(g)$.  Since $T$ is an $\Rb$-tree formed as a union of axes of elements of $S$, we deduce that $T$ is a line, so $\sigma_{T,x}(m)=1$ for all $m \ge 0$ and $x \in T$.  The fact that $H$ acts properly now ensures that $H/K$ is cyclic, where $K$ is the kernel of the action of $H$ on $T$; since $K$ is compact, it is then clear that $H$ is uniscalar.  The remaining conclusions of the theorem are now clear, so we may assume instead that $s_G(g)>1$.

Using elements of $S$ and their inverses, we see that every $H$-orbit on $T$ intersects the compact line segment $Y = [\rho(0),\rho(|g|)]$.  By hypothesis the point stabilizer $G_{\rho(0)}$ is open, and since $H$ also stabilizes $\xi$, we see that $H_{\rho(0)}$ fixes $Y$.  The fact that $H$ acts properly then ensures that for all $r>0$ the ball $B_r$ of radius $r$ around $\rho(0)$ can only intersect finitely many $H$-translates of $Y$, so the intersection of $T$ with $B_r$ is contained in a finite union of $H$-translates of $Y$.

Let $V$ be the set of points $v$ of $T$ such that $T \setminus \{v\}$ has more than two connected components.  The assumption that $s_G(g)>1$ ensures that also $s_H(g)>1$, by part (i); thus $T$ is not a line, which ensures that $V$ is not empty.  On the other hand, the covering of $T$ by $H$-translates of $Y$ ensures that $V$ has finite intersection with $B_r$, so $V$ is discrete.  In particular, $V \cap Y$ is finite, so $H$ has finitely many orbits on $V$.  We see that the connected components of $T \setminus V$ are open line segments, which admit $H$-translates inside $Y$ and hence have length at most $|g|$.  Replacing $g$ with some $h \in S$ does not materially change the construction of $T$, so in fact the edge lengths are all bounded above by $\lambda$, and hence $\lambda>0$.  We can then build a combinatorial graph structure with vertex set $V$ and an edge between distinct vertices $v$ and $w$ of length $d_T(v,w)$ if there is a $T$-geodesic from $v$ to $w$ that does not pass through any other point in $V$.  Note that such a geodesic will be contained in some element of $L_{G,\rho}$, so it is actually an $X$-geodesic. We see that $H$ naturally acts continuously by isometries on the resulting graph; moreover, each vertex has less than $|B_{|g|} \cap V|$ neighbours, so the graph is locally finite and the action of $H$ on $T$ is proper and cocompact.  In particular, the minimum translation length $\lambda$ is achieved by some $h \in S$; without loss of generality, $|g|=\lambda$.  We then see that the line segment $Y$ witnesses each $H$-orbit of edges of the combinatorial tree exactly once, so $\lambda$ is the sum of the edge lengths.  This completes the proof of (ii), (iii) and (iv).

All that remains is to prove (v).  Take some $h \in H$ and consider $\beta(h)$.  If $\beta(h)=0$ then $h$ is bounded and hence $s_H(h)=1$, since $H$ acts properly.  If $\beta(h) > 0$, then $h$ has repelling end $\xi$, so $s_{G_{\xi}}(h)=1$, and hence $s_H(h)=1$.  Given (ii), from now on we may assume $\beta(h) < 0$, in other words $h \in S$.  Write $R_t = \rho([t,+\infty))$; we take $t$ such that the $\grp{h}$-translates of $R_t$ are totally ordered by inclusion (by our previous argument, this will be the case for $t \gg 0$).  Let $W_t = H_{\rho(t)}$.  Then $W_t$ is compact and open in $H$, and fixes $R_t$; moreover, $hR_t$ is contained in $R_t$, so $hW_th\inv \ge W$, and hence $W_t$ is tidy for $h$ as an element of $H$.  We then obtain the scale as $s_H(h) = |hW_th\inv:W_t|$, or equivalently, $s_H(h)$ is the number of points in $O_t$, where $O_t$ is the $H_{\rho(t+|h|)}$-orbit of $\rho(t)$.  For all $x \in O_t$ we see that $x \le_T \rho(t+|h|)$, with $d(\rho(t+|h|),x) = |h|$.  On the other hand, given $x,y \in T$ such that $x,y \le_T \rho(t+|h|)$ and $d(\rho(t+|h|),x) = |h|$, we see there exist $s_x,s_y \in S$ such that both $s_xx,s_yy \in R_t$, and given the position of $x$ and $y$ relative to $\rho(t)$, we can in fact arrange to have $s_xx = s_yy$.  Given $h' \in H$ such that $h'x = y$, we see that $\lambda_{h'} = 0$; the rays from $x$ and $y$ going towards $\xi$ both contain $\rho(t+|h|)$, from which we deduce that $h'$ fixes $\rho(t+|h|)$.  From this argument we deduce that 
\[
O_t = \{y \in X \mid y \le_T \rho(t+|h|), d_T(\rho(t+|h|),y) = m\},
\]
so 
\[
s_H(h) = O_t = \sigma_{T,\rho(t+|h|)}(|h|).
\]
Now $s_H(h)$ does not depend on the choice of $t \gg 0$; by varying $t$, we can choose for the point $x_0 = \rho(t+|h|)$ to lie in any $H$-orbit, and then it is clear that $\sigma_{T,x_0} = \sigma_{T,h'x_0}$ for all $h' \in H$.  Thus $\sigma_{T,x}(|h|) = s_H(h)$ for all $x \in T$, which completes the proof.
\end{proof}

\section{Tidy subgroups}\label{sec:tidy}

We are ready to prove Theorem~\ref{thm:intro_tidy} from the introduction; let us first recall the relevant conditions on the compact open subgroup $U$.

\begin{defn}
Let $X$ be an NPC space, let $G$ be a \tdlc group acting properly and continuously by isometries and let $g \in G$ be hyperbolic, with attracting and repelling ends $\xi_+$ and $\xi_-$ respectively.  Let $U$ be a compact open subgroup of $G$.

Say that $U$ is \defbold{geometrically tidy above for $g$} ($\mathrm{GTA}(g)$) if $U = U_{\xi_+}U_{\xi_-}$.

Say that $U$ is \defbold{geometrically positively aligned with $g$} ($\GTp(g)$) if $g\inv U_{\xi_+}g \le U$.

Say that $U$ is \defbold{geometrically negatively aligned with $g$} ($\GTm(g)$) if $g U_{\xi_-}g\inv \le U$.  (Note that $\GTm(g) = \GTp(g\inv)$.)
\end{defn}

\begin{proof}[Proof of Theorem~\ref{thm:intro_tidy}]
Write $U_+ = \bigcap_{n \ge 0} g^nUg^{-n}$ and $U_- = \bigcap_{n \ge 0} g^{-n}Ug^n$.

Suppose $U$ is minimizing for $g$.  Then by Theorem~\ref{thm:Willis} have $U = U_+U_-$.  Note that $g\inv U_+g, gU_-g\inv \le U$ by construction.  In turn, by Lemma~\ref{lem:parabolic_tidy}, we have $U \cap \para_G(g\inv) = U_+$ and $U \cap \para_G(g) = U_-$.  Given Theorem~\ref{thm:para_stable}, we therefore have $U_+ = U_{\xi_+}$ and $U_- = U_{\xi_-}$.  We now see that all the conditions $\GTA(g),\GTA(g),\GTm(g)$ are satisfied.

Conversely, suppose that $U$ satisfies $\GTA(g)$, $\GTp(g)$, and $\GTm(g)$.  We see that
\[
U_{\xi_+} \le gUg\inv \cap G_{\xi_+} = gU_{\xi_+}g\inv,
\]
and similarly, $gU_{\xi_-}g\inv \le U_{\xi_-}$.  In particular, $U \cap gUg\inv$ contains $U_{\xi_+} gU_{\xi_-}g\inv$, so
\[
|gUg\inv:U \cap gUg\inv| \le \frac{\mu(gUg\inv)}{\mu(U_{\xi_+} gU_{\xi_-}g\inv)} \le |gU_{\xi_+}g\inv:U_{\xi_+}|,
\]
where $\mu$ is any right-invariant Haar measure for $G$.  In turn, we see that 
\[
|gU_{\xi_+}g\inv:U_{\xi_+}| = \Delta_{G_{\xi_+}}(g) = s_G(g),
\]
where the last equality is by Corollary~\ref{cor:parabolic_scale}.  Thus $U$ is minimizing for $g$.
\end{proof}

\begin{rem}\label{rem:geometrically_tidy}
An equivalent way of stating that $U$ satisfies $\mathrm{GTA}(g)$ is to say that $U$ acts transitively on $U\xi_+ \times U\xi_-$, where $U\xi$ denotes the orbit of $U$ on $\partial X$ containing $\xi$.

The condition $\GTp(g)$ can be expressed in a few equivalent ways (and similarly for $\GTm(g)$):
\begin{enumerate}[(a)]
\item $g\inv U_{\xi_+}g \le U$;
\item $g\inv U_{\xi_+}g \le U_{\xi_+}$;
\item $U_{\xi_+}$ is tidy for the action of $g$ on $G_{\xi_+}$;
\item We have $U_{\xi_+} \le U_+$.
\end{enumerate}
\end{rem}

Corollary~\ref{cor:intro_tidy_neighbourhood} is almost immediate; Corollary~\ref{cor:intro_tidy_axis} takes a little more explanation.

\begin{proof}[Proof of Corollary~\ref{cor:intro_tidy_neighbourhood}]
It is enough to consider the case that $U$ is compact.  By Theorem~\ref{thm:intro_tidy} the product $V_{\xi_+}V_{\xi_-}$ is a neighbourhood $V$ of the identity in $G$ for some $V \in \mc{COS}(G)$; since $U$ contains a finite index subgroup of $V$, we can cover $V = V_{\xi_+}V_{\xi_-}$ by finitely many double cosets $U_{\xi_+}hU_{\xi_-}$.  Thus $V$ is partitioned into finitely many compact sets (and hence finitely many open sets) of the form $U_{\xi_+}hU_{\xi_-} \cap V$.  In particular, $U_{\xi_+}U_{\xi_-}$ is a neighbourhood of the identity in $G$.
\end{proof}

\begin{proof}[Proof of Corollary~\ref{cor:intro_tidy_axis}]
Recall that we have assumed the image of $\gamma: \Rb \rightarrow X$ is an axis for $g$; write $\xi_+$ for the attracting end of $g$ and $\xi_-$ for the repelling end of $g$.  Without loss of generality, there is $a > 0$ such that $g\gamma(r) = \gamma(r+a)$ for all $r \in \Rb$, so that $\gamma(s) \rightarrow \xi_{\pm}$ as $s \rightarrow \pm \infty$.

Let $r \in \Rb$ and $t \ge 0$ and let $U(r,t) = G_{\gamma(r),\gamma(r+t)}$.  The condition that $G_{\gamma(0)}$ is open ensures that $G_{\gamma(-ma),\gamma(na)}$ is open for all $m,n \in \Nb$, from which we deduce that $U(r,t)$ is open; by properness of the action, $U(r,t)$ is a compact subgroup of $G$.  Moreover, we have $U(r,t)_{\xi_+} = \Fix_G(\gamma([r,+\infty)))$ and $U(r,t)_{\xi_-} = \Fix_G(\gamma((-\infty,r+t]))$; it is then clear that $gU(r,t)_{\xi_+}g\inv \ge U(r,t)_{\xi_+}$ and $g U(r,t)_{\xi_-}g\inv \le U(r,t)_{\xi_-}$, ensuring that $U(r,t)$ satisfies $\GTp$ and $\GTm$.

By Corollary~\ref{cor:intro_tidy_neighbourhood} the product $U(0,0)_{\xi_+}U(0,0)_{\xi_-}$ is a neighbourhood of the identity in $G$.  We have $\bigcap_{t \ge 0}U(0,t) \le U(0,0)_{\xi_+}$; since the sets $U(0,t)$ are all compact subgroups, there is thus $t_0 \ge 0$ such that 
\[
\forall t \ge t_0: \; U(0,t) \subseteq U(0,0)_{\xi_+}U(0,0)_{\xi_-}.
\]
We see that $U(0,0)_{\xi_+} = U(0,t)_{\xi_+} \le U(0,t)$ and $U(0,t)_{\xi_-} \le U(0,0)_{\xi_-}$, so in fact we obtain
\[
\forall t \ge t_0: \; U(0,t) = U(0,t)_{\xi_+}U(0,t)_{\xi_-};
\]
in other words, for all $t \ge t_0$ then $U(0,t)$ satisfies $\GTA$, and hence is tidy for $g$ by Theorem~\ref{thm:intro_tidy}.
\end{proof}

\begin{ex}\label{ex:small_tidy}
Let $n \ge 2$, let $F$ be the free product of $n^2$ copies of $\Zb/2\Zb$, with the copies indexed by $\Zb/n\Zb \times \Zb/n\Zb$, and let $T$ be the corresponding Cayley graph of $F$.  (We will write elements of $\Zb/n\Zb$ as integers, which should be understood to be read modulo $n$; we also identify $T$ with its geometric realization in order to consider it as an NPC space.)  Thus $T$ is a tree in which each vertex has $n^2$ neighbours, and the edges are labelled in such a way that the edges incident with each vertex are in bijection with $\Zb/n\Zb \times \Zb/n\Zb$.  Then for each $g \in \Isom(T)$ and $v \in VT$ there is a permutation $\sigma(g,v)$ of $\Zb/n\Zb \times \Zb/n\Zb$, the \defbold{local action of $g$ at $v$}, such that if $e$ is an edge incident with $v$ of colour $c$, then $ge$ has colour $\sigma(g,v)(c)$.  We let $A = \Sym(n) \wr C$, where the $j$-th copy $S_j$ of $\Sym(n)$ has natural action on $\Zb/n\Zb \times \{j\}$ and fixes $\Zb/n\Zb \times \{j'\}$ for $j' \neq j$, and where the cyclic group $C \cong \Zb/n\Zb$ on top sends $(i,j)$ to $(i,j+1)$ for all $i,j \in \Zb/n\Zb$.  Let $G_0 = U(A)$ be the universal group with local action $A$ (in the sense of Burger--Mozes \cite{BurgerMozes}).

We define a closed subgroup $G$ of $G_0$ by imposing the following additional condition on elements $g \in G$: for each pair of vertices $v,w$ connected by an edge of colour $(i,j)$, then $\sigma(g,v)^{-1}\sigma(g,w)$ is an element of $S_{(i,j)} := S_j$.  To check this defines a subgroup of $G_0$, we consider $\sigma(gh^{-1},v_t)$ for $g,h \in G$ and $v_0,v_1 \in VT$ connected by an edge of colour $(i,j)$.  Write $(i',j')$ for the colour of the edge $(h^{-1}v_0,h^{-1}v_1)$.
\begin{align*}
\sigma(gh^{-1},v_0)^{-1}\sigma(gh^{-1},v_1) &=  \left(\sigma(h,h^{-1}v_0)\sigma(g,h^{-1}v_0)^{-1} \right) \left( \sigma(g,h^{-1}v_1)\sigma(h,h^{-1}v_1)^{-1} \right) \\
&\subseteq \sigma(h,h^{-1}v_0)S_{(i',j')}\sigma(h,h^{-1}v_1)^{-1} \\
&=  \sigma(h,h^{-1}v_0)S_{(i',j')}\sigma(h,h^{-1}v_0)^{-1} = S_{(i,j)}.
\end{align*}
The group $G$ contains $F$, so $G$ is arc-transitive on $T$.  Moreover, there is $g \in G$ with $|g|=1$ such that $\sigma(g,v) = c$ for all $v \in VT$, where $c$ is the standard generator $1+n\Zb$ of $C$.  We choose the element $g$ to have an axis $\gamma$ passing through vertices $x_j := \gamma(j)$ for $j \in \Zb$, such that the edge $(x_j,x_{j+1})$ is labelled $(0,j)$.  Let $U = G_{x_1,x_{n}}$; we claim that $U$ is \emph{not} tidy for $g$.  Specifically, consider how $U$ acts on $Y_1 \times Y_n$, where $Y_1$ is the $U$-orbit of $(x_0,x_1)$ and $Y_n$ is the $U$-orbit of $(x_n,x_{n+1})$.  Taking labellings gives a bijection from $Y_1$ to $(\Zb/n\Zb \setminus \{0\}) \times \{0\}$ and also from $Y_n$ to $(\Zb/n\Zb \setminus \{0\}) \times \{0\}$, and $U$ acts on both $Y_1$ and $Y_n$ as the stabilizer of $(0,0)$ in $S_0$.  However, given $u \in U$, then $\sigma(u,x_1)$ and $\sigma(u,x_n)$ belong to the same left coset of the subgroup $\prod^{n-1}_{j=1}S_j$ of $A$, so $u$ induces the same permutation of the labels of $Y_1$ as it does on $Y_n$.  Hence $U$ does not act transitively on $Y_1 \times Y_n$, which also means that $U$ does not act transitively on $U\xi_-(g) \times U\xi_+(g)$, so $U$ fails condition $\GTA$.  Indeed, one can deduce that $G_{\gamma(i),\gamma(j)}$ fails $\GTA$, and hence fails to be tidy for $g$, whenever there exists $t \in \Zb$ such that $t+1 \le i < j \le t+n$.
\end{ex}

\section{The contraction group}\label{sec:contraction}

Recall that in a topological group $G$, the contraction group of $g \in G$ (modulo some closed $\grp{g}$-invariant subgroup $K$) consists of those elements $h \in G$ such that $g^nhg^{-n}K$ converges to the trivial coset in $G/K$ as $n \rightarrow +\infty$.  In this section we give a geometric interpretation for the contraction group, for a suitable actions of a \tdlc group $G$ on an NPC space $X$.

In this subsection we will be taking contraction groups modulo a compact $\grp{g}$-invariant subgroup $K$.  In \tdlc groups, this does not create any serious complications, thanks to Theorem~\ref{thm:bw:relative_contraction}.  However, it is useful to work modulo a compact $\grp{g}$-invariant subgroup for two reasons.  First, we want to consider contraction in terms of a closed convex $\grp{g}$-invariant (but not necessarily $G$-invariant) subspace $Y$, so we can only hope to describe the set $\con_G(g/K)$ where $K = \Fix_G(Y)$.  Second, in general contraction groups in \tdlc groups are not closed, even in the geometrically benign context of a proper continuous action on a locally finite tree; however, scale theory leads to some more natural closed groups of the form $\con_G(g/K)$, and we want to understand these geometrically.

Here is one interpretation of what it means for elements of a locally compact group to converge to the trivial isometry, relative to the fixator of some subspace $Y$.

\begin{lem}\label{lem:basic_contraction}
Let $X$ be a metric space, let $G$ be a locally compact group acting properly and continuously by isometries on $X$, and let $\emptyset \neq Y \subseteq X$ with $K:=\Fix_G(Y)$.  Let $(g_n)$ be a net of elements of $G$.  Then $g_nK \rightarrow K$ if and only if $d(g_ny,y) \rightarrow 0$ for every $y \in Y$.
\end{lem}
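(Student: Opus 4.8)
The statement is purely topological once the definitions are unwound, so the plan is to prove the two implications separately: the forward one needs only continuity of the action, the reverse one needs properness.

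For the forward implication, suppose $g_nK \to K$ in $G/K$ and fix $y \in Y$. The orbit map $G \to X$, $h \mapsto hy$, is continuous since the action is continuous, and it is constant on left cosets of $K$ because $K = \Fix_G(Y)$ fixes $y$; hence it descends to a continuous map $G/K \to X$ sending $hK$ to $hy$. Applying this map to the convergent net $g_nK \to K$ gives $g_ny \to y$, i.e.\ $d(g_ny,y) \to 0$; as $y \in Y$ was arbitrary, this direction is done.

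For the reverse implication, suppose $d(g_ny,y) \to 0$ for every $y \in Y$, and fix some $y_0 \in Y$. Properness of the action — invoked exactly as in the proof of Lemma~\ref{lem:elliptic_unimodular} — ensures that $C := \{h \in G \mid d(hy_0,y_0) \le 1\}$ is a compact subset of $G$; note also that $K \subseteq C$ is closed, hence compact. The hypothesis gives $g_n \in C$ eventually. Suppose for contradiction that $g_nK \not\to K$. Since the quotient map $\pi\colon G \to G/K$ is open and $K$ is compact, the sets $\pi(O)$ with $O \supseteq K$ open in $G$ form a neighbourhood basis of $K$ in $G/K$, so there is an open $O \supseteq K$ and a subnet $(g_{n_j})$ with $g_{n_j} \notin \pi^{-1}(\pi(O)) = OK$ for all $j$. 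This subnet is eventually in the compact set $C$, so it admits a further subnet $g_{n_{j_k}} \to h$ with $h \in C \subseteq G$. For each $y \in Y$, continuity of the action gives $g_{n_{j_k}}y \to hy$, while the hypothesis gives $g_{n_{j_k}}y \to y$; since $X$ is Hausdorff, $hy = y$. Thus $h \in \Fix_G(Y) = K \subseteq OK$, and as $OK$ is open we conclude $g_{n_{j_k}} \in OK$ eventually, contradicting the choice of the subnet. Hence $g_nK \to K$.

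The argument is routine; the only delicate point is the topological bookkeeping in the reverse direction, namely reducing convergence in $G/K$ to a statement about open sets of the form $OK$ in $G$, for which one uses that $K$ is compact (itself a consequence of properness). This step can be sidestepped via the subnet characterisation of convergence: it suffices to check that every subnet of $(g_nK)$ has a further subnet converging to $K$, and compactness of $C$ together with the Hausdorff argument above supplies such a subnet directly.
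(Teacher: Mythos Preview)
Your proof is correct. The forward direction is essentially identical to the paper's. For the reverse direction the two arguments diverge: the paper argues directly, using compactness of $(L \smallsetminus OK)/K$ to extract finitely many test points $y_1,\dots,y_k \in Y$ and thresholds $\delta_1,\dots,\delta_k$ with the property that any $g \in L$ satisfying $d(gy_i,y_i) \le \delta_i$ for all $i$ already lies in $OK$, and then invokes the hypothesis on these finitely many points. You instead use the subnet characterisation of convergence: trap the net in a compact set, pass to a convergent subnet of a putative bad subnet, and identify the limit as an element of $K$ via Hausdorffness. Your route is slightly slicker and avoids the explicit finite cover; the paper's route is more constructive in that it exhibits a finite set of points of $Y$ that witnesses membership in any given $OK$, which is in the spirit of the compact-open topology. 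Either way the essential input is the same: properness confines the net to a compact set, and $K$ is exactly the set of elements fixing every $y \in Y$.
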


\begin{proof}
If $g_nK \rightarrow K$, then by continuity of the action we have $d(g_ny,y) \rightarrow 0$ for every $y \in Y$.  Conversely, suppose that $d(g_ny,y) \rightarrow 0$ for every $y \in Y$.  Since $G$ acts properly, we see that $(g_n)$ is eventually confined to a compact identity neighbourhood $L$, which we can take to satisfy $L = LK$.  Now let $O$ be an arbitrary identity neighbourhood.  Since $K$ is the fixator of $Y$, for each $l \in L \setminus OK$ there is some $y \in Y$ and $\delta > 0$ such that $d(ly,y) \ge 2\delta$, so by continuity, there is a neighbourhood $L'$ of $l$ such that $d(l'y,y) > \delta$ for all $l' \in L'$.  Given that $(L \smallsetminus OK)/K$ is compact, in fact there are points $y_1,\dots,y_k \in Y$ and distances $\delta_1,\dots,\delta_k > 0$ such that
\[
\forall g \in L: (\forall 1 \le i \le k: d(gy_i,y_i) \le \delta_i ) \Rightarrow g \in OK.
\]
In particular, by the above expression we deduce that $g_n \in OK$ eventually.  Since $O$ was arbitrary we conclude that $g_nK \rightarrow K$.
\end{proof}

We now specialize to the case of locally discrete actions.  In that case we can simplify the interpretation of Lemma~\ref{lem:basic_contraction} by observing that
\[
d(g_ny,y) \rightarrow 0 \quad \Leftrightarrow \quad g_ny = y \text{ eventually}.
\]
We define the pointwise limit inferior $\liminf (X_n)$ of a sequence (or net) of subsets of $X$ to consist of all points $x$ such that $x \in X_n$ eventually.

\begin{cor}\label{cor:basic_contraction}
Let $X$ be a metric space and let $G$ be a locally compact group acting locally discretely and properly by isometries on $X$.  Let $\emptyset \neq Y \subseteq X$ with $K:=\Fix_G(Y)$.  Let $u,g \in G$, such that $gKg\inv = K$.
\begin{enumerate}[(i)]
\item Suppose $u \in \con_G(g/K)$.  Then for all $y \in Y$, there is some $n_0$ such that $ug^{-n}y = g^{-n}y$ for all $n \ge n_0$.  In other words, for all $\epsilon >0$,
\[
Y \subseteq \liminf_{n \rightarrow \infty}g^nX^u.
\]
\item Suppose for every $y \in Y$ that $d(ug^{-n}y,g^{-n}y) \rightarrow 0$.  Then $u \in \con_G(g/K)$.
\end{enumerate}
\end{cor}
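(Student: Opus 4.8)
The plan is to reduce both statements to Lemma~\ref{lem:basic_contraction} together with the observation recorded just above the corollary: for a locally discrete proper action, $d(g_n y, y)\to 0$ if and only if $g_n y = y$ eventually. The bridge is the isometry identity $d(g^n u g^{-n}y,\, y) = d(u g^{-n}y,\, g^{-n}y)$. Applying Lemma~\ref{lem:basic_contraction} to the sequence $k_n := g^n u g^{-n}$ (and to the set $Y$, whose fixator is $K$), one gets that $u\in\con_G(g/K)$, i.e.\ $k_n K\to K$, if and only if $d(u g^{-n}y,\, g^{-n}y)\to 0$ for every $y\in Y$. This single equivalence contains everything.

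Part (ii) is then immediate, since its hypothesis is exactly the right-hand condition. For part (i), assume $u\in\con_G(g/K)$, so that for each fixed $y\in Y$ we have $d(k_n y, y) = d(u g^{-n}y, g^{-n}y)\to 0$. Now apply the observation \emph{at the fixed base point $y$}, to the net $(k_n)$: it yields $n_0 = n_0(y)$ with $k_n y = y$ for all $n\ge n_0$, that is $u g^{-n}y = g^{-n}y$, equivalently $g^{-n}y\in X^u$, for all $n\ge n_0$. Thus $y = g^n(g^{-n}y)\in g^n X^u$ for all $n\ge n_0$, so $y\in\liminf_n g^n X^u$; as $y$ ranges over $Y$ this gives $Y\subseteq\liminf_n g^n X^u$.

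There is no serious obstacle here: the corollary is essentially a repackaging of Lemma~\ref{lem:basic_contraction}. The one point requiring a little care is to feed the ``distance $\to 0$ forces eventual fixing'' observation the conjugated net $k_n = g^n u g^{-n}$ against a \emph{fixed} base point, rather than trying to reason about the single isometry $u$ against the moving points $g^{-n}y$, where the observation does not directly apply. If one wants to spell out the observation itself, it follows from properness and local discreteness: the fixator of a small ball about $y$ is open and contained in the compact stabilizer $G_y$, so $G_y$ is open; then $\{h : d(hy,y) < \epsilon\}\subseteq G_y$ for small $\epsilon$, by compactness of $\{h : d(hy,y)\le 1\}\setminus G_y$. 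The hypothesis $gKg^{-1}=K$ is used only to guarantee that $\con_G(g/K)$ is the natural object under consideration.
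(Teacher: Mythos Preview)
Your argument is correct and is exactly the intended one: the paper gives no separate proof of this corollary, treating it as an immediate consequence of Lemma~\ref{lem:basic_contraction} applied to the net $k_n = g^n u g^{-n}$ together with the displayed observation that, under a locally discrete proper action, $d(g_n y,y)\to 0$ is equivalent to $g_n y = y$ eventually. Your use of the isometry identity $d(k_n y,y)=d(ug^{-n}y,g^{-n}y)$ and your care to apply the observation at a fixed base point are precisely what the paper leaves implicit.
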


We now prove the characterization of compact subsets of contraction groups in terms of absorbing sets, as stated in Theorem~\ref{thm:intro_contraction}.

\begin{proof}[Proof of Theorem~\ref{thm:intro_contraction}]
First, consider the case that $g$ is bounded.  Since the action is proper, it follows that $\grp{g}$ is contained in a compact set, and indeed $(g^n)$ has a subsequence converging to the identity in $G$, from which it is clear that $\con_G(g/K) = K$.

From now on we may assume that $g$ is hyperbolic, with repelling point $\xi_-$.  Fix a base point $y_0 \in Y$, let $r$ be a positive real number and let $Z_r = \{y \in Y \mid d(y_0,y) \le r\}$.  Set $y_n = g^{-n}y_0$; fix also a ray $\rho$ representing $\xi_-$ with $\rho(0) = y_0$.  If $X$ is complete CAT(0) we can assume the image of $\rho$ is contained in an axis for $g$, so $y_n = \rho(n|g|)$; set $\kappa = |g|$.  If instead $X$ is a proper $\delta$-hyperbolic space, then since $\xi_-$ is the repelling point of $g$, by Lemma~\ref{lem:close_geodesic} there are $\kappa \in \Rb_{>0}$ and $t_n \rightarrow +\infty$ such that $d(y_n,\rho(t_n))+|t_{n+1}-t_n| \le \kappa$ for all $n$.

Since the action of $G$ is locally discrete, for each $y \in Z_r$ there is an open neighbourhood $O_y$ of $y$ fixed by an open subgroup of $G$.  Since $X$ is proper, finitely many such neighbourhoods suffice to cover $Z_r$, so there is a compact open subgroup $U_r$ of $G$ that fixes $Z_r$.  It follows that the fixator $H_r$ of $Z_r$ in $G$ contains $U_r$; in particular, $H_r$ is open.  Since the action is proper, each of the subgroups $H_r$ is also compact.

Suppose that the compact subset $C$ of $G$ is contained in $\con_G(g/K)$.  Then by Lemma~\ref{lem:uniform_contract}, for each $r > 0$ there is $n_r \in \Nb$ such that $g^nCg^{-n}$ is a subset of $H_{r+\kappa}$ for all $n \ge n_r$.  Thus $\grp{C}$ fixes $g^{-n}Z_{r+\kappa}$ for all $n \ge n_r$; note that $g^{-n}Z_{r+\kappa} = \{y \in Y \mid d(y_n,y) \le r+\kappa\}$, which contains the ball of radius $r$ around $\rho(t)$ for all $t_n \le t \le t_{n+1}$.  Thus $X^{\grp{C}}$ is an absorbing set for $\xi_-$ within $Y$.

Conversely, suppose $X^{\grp{C}}$ is an absorbing set for $\xi_-$ within $Y$ and let $h \in C$.  Then $h$ fixes the set $\{y \in Y \mid d(y_n,y) \le r_n\}$, where $r_n$ is some sequence tending to $+\infty$; by replacing $r_n$ with $\inf\{r_{n'} \mid n' \ge n\}$, we may assume $r_n \le r_{n+1}$ for all $n \in \Nb$.  Hence for all $n' \ge n$, we see that $g^{n'}hg^{-n'}$ fixes the set 
\[
\{y \in Y \mid d(g^{n'}y_{n'},y) \le r_{n'}\} = Z_{r_{n'}},
\]
so $g^{n'}hg^{-n'} \le H_{r_n}$.  Since $r_n \rightarrow +\infty$, we see that $K$ is the intersection of the descending sequence of subgroups $(H_{r_n})$; since each of the subgroups $H_{r_n}$ is compact and open in $G$, in fact the sets $H_{r_n}/K$ form a base of neighbourhoods of the trivial coset in $G/K$.  Thus $g^{n}hg^{-n}K \rightarrow K$ in $G/K$ as $n \rightarrow +\infty$, so $h \in \con_G(g/K)$.
\end{proof}

\begin{proof}[Proof of Corollary~\ref{cor:intro_contraction_closed}]
The equivalence of (a) and (b) is clear from Corollary~\ref{cor:contraction_closure}.

Let $W$ be a compact open subgroup of $G$, for example $W = G_x$ for some $x \in X$.  We see that $W = V \cap \con_G(g/K)$ is an open subgroup of $\con_G(g/K)$.  Suppose $\con_G(g/K)$ is closed.  Then $W$ is compact, so by Lemma~\ref{lem:uniform_contract}, for every compact open subgroup $U$ of $G$, there exists $n_U \in \Nb$ such that $g^nWg^{-n} \le U$ for all $n \ge n_U$.  By Theorem~\ref{thm:intro_contraction}, we see that $X^W$ is absorbing for $\xi_-(g)$ within $Y$.  Conversely, suppose that $X^W$ is an absorbing set for $\xi_-(g)$ within $Y$.  Since $X^W = X^{\ol{W}}$, it follows from Theorem~\ref{thm:intro_contraction} that $\ol{W} \le \con_G(g/K)$, and since $V$ is a clopen identity neighbourhood in $G$, we deduce that $\con_G(g/K)$ is closed.  Thus (a) and (c) are equivalent.
\end{proof}

\begin{proof}[Proof of Corollary~\ref{cor:intro_contraction_tidy}]
Let $C = \con_G(g/K)$.  Taking $V$ minimizing for $g$ and $W = V \cap C$, we then have
\[
s_G(g^{-1}) = |g^{-1}Vg:g^{-1}Vg \cap V| \ge |g^{-1}Wg:g^{-1}Wg \cap W| \ge s_C(g^{-1}).
\]
By Lemma~\ref{lem:scale_subgroup} we have $s_C(g^{-1}) = s_G(g^{-1})$, so in fact $|g^{-1}Wg:g^{-1}Wg \cap W| = s_C(g^{-1})$, ensuring that $W$ is minimizing for $g^{-1}$, and hence also for $g$.  If $C$ is compact then $s_G(g^{-1})=1$ by Lemma~\ref{lem:BW_scale_one}, so $C=W$, and then we see that the only way $g$ can be contracting on $C$ modulo $K$ is if $C=K$.  From now on we may assume that $C$ is not compact (equivalently, $s_G(g^{-1})>1$).

The set of fixed points of any subgroup is closed and convex, so $Z = X^W \cap Y$ is closed and convex.

Clearly $C \le \para_G(g)$, so by Lemmas~\ref{lem:BW_scale_one} and~\ref{lem:scale_subgroup} we have $s_C(g) =1$; thus $gWg^{-1} < W$, and hence
\[
X^W \subseteq X^{gWg^{-1}} = gX^W.
\]
Since $Y$ is $\grp{g}$-invariant, we deduce that $Z \subseteq gZ$, so $(g^nZ)_{n \in \Zb}$ consists of subsets of $Y$ that are totally ordered by inclusion.  The fact that $Z$ is absorbing for $\xi_-(g)$ within $Y$ ensures that in fact $\bigcup_{n \in \Zb} g^nZ = Y$.  On the other hand, since $\bigcup_{n \in \Zb}g^nWg^{-n} = C$, we see that the set $Z_0 := \bigcap_{n \in \Zb} g^nZ$ can be written as $Z_0 = X^C \cap Y$.  Since $C$ is not compact and $G$ acts properly, it follows that $Z_0$ is empty.
\end{proof}

We finish with a well-known example of a \tdlc group acting on a tree, to illustrate Corollary~\ref{cor:intro_contraction_tidy} in the case where $\con_G(g)$ itself is closed.

\begin{ex}\label{ex:SL2}
Let $\widetilde{G} = \mathrm{GL}_2(\Qb_p)$ and let $X$ be the Bruhat--Tits tree of $\widetilde{G}$; for more details of the construction, see \cite[Chapter~II \S1]{Serre:trees}.  Let $S$ be the group of scalar matrices in $\widetilde{G}$.  (The group acting on the tree that we are interested in is actually $G = \widetilde{G}/S$, but it is easier to consider the general linear group in order to use its action on $\Qb^2_p$.)  In particular, the vertices of $X$ are the $S$-orbits $[L]$ of lattices $L$ of $\Qb^2_p$, where \defbold{lattice} means a free $\Zb_p$-submodule of rank $2$.  The distance from $[L]$ to $[L']$ is given as $|i-j|$ where $L = p^i\Zb_pe'_1 + p^j\Zb_pe'_2$ for some basis $\{e'_1,e'_2\}$ of $L'$, and the number $|i-j|$ is unaffected by multiplying $L$ by a scalar.  In particular, if we take representatives such that $L \le L'$ but $L \nleq pL'$ (which is always possible), then $L/L'$ is trivial or cyclic and we see that $d([L],[L']) = \log_p(|L:L'|)$.  Fix a basis $\{e_1,e_2\}$ for $\Qb^2_p$ and $L_0 := \Zb_pe_1 + \Zb_pe_2$, and for $l \in \Qb^*_p$, write $\nu(l)$ for the largest integer $n$ such that $p^{-n}l \in L_0$.  Let $\mc{L}$ be the set of open subgroups $L$ of $L_0$ such that $L \nleq pL_0$; then $\mc{L}$ has a unique representative of each vertex.
  
We consider a standard hyperbolic element $g$, its contraction group $C_g :=\con_{\widetilde{G}}(g/S)$ (which in this case is closed in $\widetilde{G}$), and $W = C_g \cap \widetilde{G}_{[L_0]}$:
\[
g = 
\begin{pmatrix}
p & 0 \\
0 & 1
\end{pmatrix}; \quad
C_g = \{u_a \mid a \in \Qb_p\},\; \text{where} \quad
u_a = 
\begin{pmatrix}
1 & a \\
0 & 1
\end{pmatrix}; \quad
W = \{u_a \mid a \in \Zb_p\}.
\]
Let $L_i$ be $e_2+p^iL_0$ if $i > 0$ and $e_1+p^{-i}L_0$ if $i<0$; then the convex hull of $\{[L_i] \mid i \in \Zb\}$ is the axis of $g$, with $g[L_i] = [L_{i+1}]$ for all $i \in \Zb$.  We recover the set $Z$ as in Corollary~\ref{cor:intro_contraction_tidy} as $X^W$; we claim that in fact $X^W = Z_0$, where $Z_0$ is the convex hull of the horosphere centred at $\xi_-(g)$ that passes through $[L_0]$.  Given $n \ge 0$ and $L \in \mc{L}$ such that $d_X([L_0],[L]) \le n$, observe that $L$ contains $p^nL_0$, and hence for all $a \in \Zb_p$, the vertex $[L]$ is fixed by $g^{n}u_ag^{-n} = u_{p^na}$.  Thus $W$ fixes pointwise the ball of radius $n$ around $[L_{-n}]$ for all $n \ge 0$; hence $W$ fixes $Z_0$.

Conversely, suppose the vertex $[L]$ is fixed by $u_1 \in W$; we can take a representative $L = l + p^kL_0$ where $l = l_1e_1 + l_2e_2$ for $l_1,l_2 \in \Zb_p$, such that $\min\{\nu(l_1),\nu(l_2)\}=0$ and $k \ge 0$.  We can ignore the case $l \in e_1+p^kL_0$, as we know the vertex $[L_{-k}] = [e_1+p^kL_0]$ is fixed by $W$, so assume $\nu(l_2)<k$.  Given $a \in \Zb_p$, we see that $u_1L \in \mc{L}$, so 
\begin{align*}
u_{1}[L] = [L] \Rightarrow u_{1}L = L &\Rightarrow \exists \lambda \in \Zb^*_p: u_{1}l-\lambda l \in p^kV_0\\
 &\Rightarrow \exists \lambda \in \Zb^*_p: ((1-\lambda)l_1 + l_2)e_1 + (1-\lambda)l_2e_2 \in p^kV_0\\
 &\Rightarrow \exists \lambda' \in \Zb_p: \nu(\lambda'l_1 + l_2) \ge k, \; \nu(\lambda'l_2) \ge k.
\end{align*}
To satisfy the last line, we need $\nu(l_2) = \nu(\lambda'l_1)$ and $\nu(\lambda') \ge k-\nu(l_2)$, which together imply $2\nu(l_2) \ge \nu(l_1) + k$.  In particular, $\nu(l_2) > 0$, so $\nu(l_1)=0$.  Let $r = k-\nu(l_2)$.  We see that $p^rL = p^rl_1e_1 + p^{k+r}L_0 = p^rL_{-k}$, so 
\[
d([L],[L_{-k}]) \le \log_p(|L_{-k}:p^rL_{-k}|) = 2r \le k,
\]
and hence $[L] \in Z_0$.

In particular, $X^W$ is clearly an absorbing set for $\xi_-(g)$ in $X$, and we have $\bigcup_{n \in \Zb}g^n X^W = X$.
\end{ex}

\end{document}